\numberwithin{equation}{section}
\theoremstyle{plain}
\newtheorem{theorem}{Theorem}[section]
\newtheorem{lem}[theorem]{Lemma}
\newtheorem{cor}[theorem]{Corollary}
\newtheorem{definition}[theorem]{Definition}
\newtheorem{proposition}{Proposition}[section]
\newtheorem{problem}{Problem}[section]
\newcommand{\beq}{\begin{equation}}
\newcommand{\eeq}{\end{equation}}
\newcommand{\beqs}{\begin{eqnarray*}}
\newcommand{\eeqs}{\end{eqnarray*}}
\newcommand{\beqn}{\begin{eqnarray}}
\newcommand{\eeqn}{\end{eqnarray}}
\newcommand{\beqa}{\begin{array}}
\newcommand{\eeqa}{\end{array}}
\def\S{\mathbb S}
\def\R{\mathbb R}
\def\M{\mathcal M}
\begin{document}

\title{A flow approach to the Musielak-Orlicz-Gauss image problem}

\author{Qi-Rui Li, Weimin Sheng, Deping Ye and Caihong Yi} 

\keywords{Gauss image problem, Geometric flows, Monge-Amp\`ere equation, Musielak-Orlicz addition, Orlicz-Minkowski problem.}
	
\date{} 
	
\begin{abstract}
In this paper, the extended Musielak-Orlicz-Gauss image problem is studied. Such a problem aims to characterize the Musielak-Orlicz-Gauss image measure $\widetilde{C}_{G,\Psi,\lambda}(\Omega,\cdot)$ of convex body $\Omega$ in $\mathbb{R}^{n+1}$ containing the origin (but the origin is not necessary in its interior). In particular, we provide solutions to the extended Musielak-Orlicz-Gauss image problem based on the study of suitably designed parabolic flows, and by the use of approximation technique (for general measures). Our parabolic flows involve two Musielak-Orlicz functions and hence contain many well-studied curvature flows related to Minkowski type problems as special cases.  Our results not only generalize many previously known solutions to the Minkowski type and Gauss image problems, but also provide solutions to those problems in many unsolved cases.  

 \vskip 2mm 2020 Mathematics Subject Classification: 35K96,  53C21,  52A30, 52A39, 52A40.

\end{abstract}
	
	\maketitle

\section{Introduction and overview of the main results} \label{introduction--1} \setcounter{equation}{0}

  The seminal Minkowski problem, dated back to  \cite{min1897, min1903},  has great influences on convex geometry and partial differential equations. It asks whether there exists a convex body whose surface area measure equals to the pregiven nonzero finite Borel measure on the unit sphere $\S^n$. The Aleksandrov problem is another classical example of Minkowski type problems; it aims to characterize the Aleksandrov integral curvature \cite{Alex42}. In certain circumstances, to solve the Minkowski and Aleksandrov problems requires to find solutions to some fully nonlinear PDEs (say Monge-Amp\`ere type equations). We refer readers to e.g. \cite{Caf90, CY76, GL97, Nir53, P78} for solutions to the  Minkowski and Aleksandrov problems. 

The past few years have witnessed the great progress on the Minkowski type problems and new Minkowski type problems are continuously emerging, including for example  the $L_p$ Minkowski problem \cite{Lu93}, the Orlicz-Minkowski problem  \cite{HLYZ2010}, the dual Minkowski  \cite{HLYZ16}, the $L_{p}$  dual Minkowski  \cite{LYZ18},  the  dual Orlicz-Minkowski problems \cite{GHWXY18, GHXY18, XY2017-1, ZSY2017}, and  the ($L_p$ and Orlicz) Aleksandrov problem \cite{FH, HLYZ-Alex}. There is a growing body work to these problems and we only name a few \cite{BHP18, BLDZ13, BLYZZ17,CHZ18, CL18, CLZ17, CLZ19, ChWang06, HP18, HZ18, HLYZ05, LW13, LO95, Zhao17, Zhao18}. The list is far from complete and more can be found in the references in, e.g.,  \cite{ HSYZ-21, HLYZ16, HLYZ-Alex, ZSY2017}.  On the one hand,  these Minkowski type problems can be reformulated by  Monge-Amp\`ere type equations (assume enough smoothness) and hence  largely enrich the theory of fully nonlinear PDEs.  On the other hand, these Minkowski type problems greatly push forward the development of the Brunn-Minkowski theory of convex bodies. For example, the $L_p$ Brunn-Minkowski theory was revived by Lutwak's groundbreaking work \cite{Lu93}, where the $L_p$ addition of convex bodies \cite{Firey-62} was combined with the volume to obtain the variational formula deriving the $L_p$ surface area measure (the central object in the $L_p$ Minkowski problem). The development of the Orlicz-Brunn-Minkowski theory  (see e.g., \cite{GHW14, Lud10, LYZ10 Adv, LYZ10 JDG,  XJL,ZHY2016}) also owes greatly to the Orlicz-Minkowski problem  \cite{HLYZ2010}. 

A recent breakthrough by B\"{o}r\"{o}czky {\it et.~al.} \cite{BLYZZ20} is the Gauss image problem, which links two given Borel measures $\lambda$ and $\mu$ on $\S^n$ via the radial Gauss image $\alpha_{\Omega}$ of a convex body $\Omega$. It asks: {\em 
under what conditions on $\lambda$ and $\mu$, does there exist a convex body $\Omega$ 
such that $\mu=\lambda(\alpha_{\Omega}(\cdot))$?} The existence and uniqueness of the solution
under the condition that $\lambda$ and $\mu$ are Aleksandrov related were proved in \cite{BLYZZ20}.
If the measures have density functions, Li and Wang \cite{LW18} obtained the same results from the optimal transportation viewpoint. We would like to mention that, although in certain circumstance the Gauss image problem becomes the Minkowski type problem, their difference is apparent: the former one involves two pre-given measures, while the latter one involves one pre-given measure.   
 
One of the main contributions of the general dual Orlicz-Minkowski problem \cite{GHWXY18, GHXY18} is the introduction of the Musielak-Orlicz function into the Minkowski type problem; and the Musielak-Orlicz function was used to define the general dual volume, a replacement of the volume and the $q$th dual volume. Throughout this paper, a Musielak-Orlicz function is a function $G: (0,\infty)\times \mathbb{S}^n\to \mathbb{R}$ such that both $G$ and $G_z(z, \xi)=\partial_z G(z,\xi)$ are continuous on $(0,\infty)\times\S^n$, where $G_z$ denotes the first order partial derivative of $G$ with respect to its first variable (see e.g., \cite{Harjulehto, Musielak} for more details).  The Musielak-Orlicz functions play important roles in analysis  \cite{Harjulehto, Musielak}, and introducing these functions into the Brunn-Minkowski theory will naturally lead to a new generation of the Brunn-Minkowski theory, namely 
the Musielak-Orlicz-Brunn-Minkowski theory of convex bodies.  A first step toward to this new theory has been done in Huang {\it et.\ al.}  \cite{HSYZ-21},
 where the Musielak-Orlicz addition was defined (see \eqref{genplus21}) and a variational formula of such an addition in terms of the general dual volume with respect to a finite Borel measure $\lambda$ on $\S^n$ was obtained (see \eqref{variation-11-27-12}). Moreover, the Musielak-Orlicz-Gauss image problem  aiming to characterize  the Musielak-Orlicz-Gauss image measure $\widetilde{C}_{\Theta}(\Omega,\cdot)$ for convex body $\Omega$ in $\R^{n+1}$ has been introduced in  \cite{HSYZ-21}. Here, $\Theta=(G,\Psi,\lambda)$ is a given triple with two Musielak-Orlicz functions $G$ and $\Psi$ defined on $(0,+\infty)\times\S^n$, and $\lambda$ a nonzero finite Borel measure on $\S^n$. Under the condition that $G$ is decreasing on its first variable, the existence of solutions to the Musielak-Orlicz-Gauss image problem is established in \cite{HSYZ-21}. See \cite{WuWuXiang} for a special case, the $L_p$ Gauss image problem and its solutions.

The major goal of this paper is to solve the (extended) Musielak-Orlicz-Gauss image problem under the condition that $G$ is increasing on its first variable. Our approach is based on the study of suitably designed parabolic flows. The flow technique has been proved to be effective and powerful in solving the Minkowski type and Gauss image problems  \cite{BIS16, Bryanivakisch, CLLN20, ChenTWX, ChenWX, ChWang00, LSW16, LL20,  LiuLu1, SY20}. The idea behind the flow technique is the fact that  the Minkowski type and Gauss image problems can be reformulated as a Monge-Amp\`ere type equation on $\mathbb{S}^n$, and this  indeed works for the (extended) Musielak-Orlicz-Gauss image problem due to equation \eqref{elliptic} (see \cite{HSYZ-21}).

  We say that $\Omega\subseteq \mathbb{R}^{n+1}$ is a convex body if it is a compact convex set with nonempty interior. 
Denote by $\mathcal{K}$ the set of all convex bodies in $\mathbb{R}^{n+1}$ containing the origin.
Let $\mathcal{K}_0\subseteq \mathcal{K}$ be the set of all convex bodies with the origin in their interiors. For $\Omega\in\mathcal{K}$,  define its radial function $r_\Omega: \S^n\rightarrow [0, \infty)$ and support function $u_\Omega: \S^n\rightarrow [0, \infty)$, respectively, by  
\begin{eqnarray}  \label{def u} \ \ \ \ 
r_\Omega(x)=\max\{a \in \mathbb{R}: a x \in \Omega\} \ \ \mathrm{and} \ \ u_\Omega(x)=\max\{\langle x, y\rangle, \,y\in \Omega\}, \, \ \,x\in\S^n, 
\end{eqnarray} where $\langle x, y\rangle$ denotes the inner product in $\mathbb{R}^{n+1}$.

For $\Omega\in\mathcal{K}$, let $\partial \Omega$ be its boundary. The Gauss map  of $\partial \Omega$, denoted by $\nu_{\Omega}: \partial \Omega\to\S^n$, is defined as follows: for $y\in \partial \Omega$, 
\begin{eqnarray*}
\nu_{\Omega}(y)=\{x\in\S^n: \langle x, y\rangle=u_\Omega(x)\}.
\end{eqnarray*} Let $\nu_{\Omega}^{-1}: \S^n\rightarrow \partial \Omega$ be the reverse Gauss map such that 
\begin{eqnarray*}
\nu_{\Omega}^{-1}(x)=\{y\in\partial \Omega: \langle x, y\rangle=u_\Omega(x)\}, \ \ \ x\in \S^{n}.
\end{eqnarray*}  Denote by $\alpha_\Omega: \S^n\rightarrow \S^n$   the radial Gauss image of $\Omega$. That is,  $$\alpha_{\Omega}(\xi)=\{x\in \S^n:  x\in \nu_{\Omega}(r_{\Omega}(\xi)\xi)\}, \ \ \xi\in \S^n. $$ Define $\alpha^*_{\Omega}: \S^n\rightarrow \S^n$, the reverse radial Gauss image of $\Omega$ as follows: for any Borel set $E\subseteq\S^n$,  
\begin{equation}\label{rev-rad-gauss} \alpha^*_\Omega(E)=\{\xi\in\S^n: r_\Omega(\xi)\xi\in\nu_\Omega^{-1}(E)\}.\end{equation} 
We often omit the subscript $\Omega$ in $r_{\Omega}$, $u_{\Omega}$, $\nu_{\Omega}$, $\nu_{\Omega}^{-1}$, $\alpha_{\Omega}$,  and $\alpha_{\Omega}^*$ if no confusion occurs.

Let $\mathcal{C}$ be the set of all Musielak-Orlicz function $G: (0,\infty)\times \mathbb{S}^n\to \mathbb{R}$ such that both $G$ and $G_z(z, \xi)=\partial_z G(z,\xi)$ are continuous on $(0,\infty)\times\S^n$, where $G_z$ denotes the first order partial derivative of $G$ with respect to its first variable.  Let $\mathcal{C}_I\subseteq \mathcal{C}$ be the set of all $G\in \mathcal{C}$ such that $G_z>0$ on $(0,\infty)\times \mathbb{S}^n$. Similarly, $\mathcal{C}_d\subseteq\mathcal{C}$ is the set consisting of all $G\in \mathcal{C}$ such that $G_z<0$ on  $(0,\infty)\times \mathbb{S}^n$.  By $d\xi$, we mean the usual spherical measure on $\S^n$.  
For $\Omega\in \mathcal{K}$ a convex body containing the origin $o\in \mathbb{R}^{n+1}$, let $N(\Omega, o)$  be the normal cone of $\Omega$ at the origin $o$,  namely, $N(\Omega, o)$ is the closed convex cone defined by    $$N(\Omega, o)=\big\{y\in \mathbb{R}^{n+1}: \langle \tilde{y}, y\rangle\leq 0 \ \ \mathrm{for}\ \tilde{y}\in \Omega\big\}.$$ Clearly, $N(\Omega, o)=\{o\}$ when $\Omega\in \mathcal{K}_0$. The Musielak-Orlicz-Gauss image measure of $\Omega\in \mathcal{K}$ is defined as follows. 

 \begin{definition}\label{def-unified}
Let $\Theta=(G,\Psi,\lambda$) be a given triple such that $G\in \mathcal{C}$, $\Psi\in \mathcal{C}_I\cup\mathcal{C}_d$, and  $\lambda$ is a nonzero finite Borel measure on $\S^n$. Denote by $\psi=z\Psi_z$.  Let  $\widetilde{C}_{\Theta}(\Omega,\cdot)$ be  the Musielak-Orlicz-Gauss image measure  of $\Omega\in\mathcal{K}$, which is defined  as follows: for each Borel set $\omega\subseteq\S^n$, 
\begin{eqnarray}
\widetilde{C}_{\Theta}(\Omega,\omega)= \int_{{\mathrm{\alpha}_{\Omega}^*}(\omega\setminus N(\Omega, o))} \frac{r_\Omega(\xi)G_z(r_\Omega(\xi),\xi)}{\psi(u_\Omega(\alpha_\Omega(\xi)),\alpha_\Omega(\xi))}\,d\lambda(\xi), \label{GDOCM def}
\end{eqnarray} where, in addition,  $G$ and $\Psi$ are assumed to be Musielak-Orlicz functions defined on $[0, \infty)\times \S^n$  if $\Omega\in \mathcal{K} \setminus \mathcal{K}_0$ is a convex body with the origin in its boundary. 
 \end{definition} When $\Omega\in \mathcal{K}_0$, this recovers  Definition 3.1 in \cite{HSYZ-21}.  
For convenience, let $\widetilde{C}_{G,\lambda}(\Omega, \cdot)=\widetilde{C}_{(G, \log t, \lambda)}(\Omega,\cdot)$, (in this case, $\psi\equiv1$). That is, for each Borel set $\omega\subseteq\S^n$, 
\begin{eqnarray}
\widetilde{C}_{G,\lambda}(\Omega,\omega) = \int_{{\mathrm{\alpha}_{\Omega}^*}(\omega\setminus N(\Omega, o))} r_\Omega(\xi)G_z(r_\Omega(\xi),\xi)\,d\lambda(\xi). \label{GDOCM def-log}
\end{eqnarray} 
It can be checked that, for $\Omega\in \mathcal{K}_0$, \begin{equation} 
\frac{\,d\widetilde{C}_{\Theta}(\Omega,\cdot)}{\,d \widetilde{C}_{G,\lambda}(\Omega, \cdot)}=\frac{1}{\psi(u_{\Omega}(\cdot), \cdot)}. \label{R-N-expression}
\end{equation}  It has been proved in \cite{HSYZ-21} that $\widetilde{C}_{\Theta}(\Omega,\cdot)$ for $\Omega\in \mathcal{K}_0$ is a finite signed Borel measure on $\S^n$. Moreover,  $\widetilde{C}_{\Theta}(\Omega, \cdot)$ for  $\Omega\in \mathcal{K}_0$  arises from calculating the variation of $\widetilde{V}_{G,\lambda}(\Omega)$ in terms of the Musielak-Orlicz addition defined in \eqref{genplus21}. Here, $\widetilde{V}_{G,\lambda}(\Omega)$ is the general dual volume of $\Omega\in \mathcal{K}_0$ with respect to $\lambda$ defined by 
 \begin{eqnarray}\label{VG def}
\widetilde{V}_{G,\lambda}(\Omega)=\int_{\mathbb{S}^n}G(r_\Omega(\xi),\xi)d\lambda(\xi),
\end{eqnarray}  and   the {\em Musielak-Orlicz
addition} is formulated by 
\begin{align}\label{genplus21} \Psi(f_{\varepsilon}(\xi), \xi)= \Psi(f(\xi), \xi)+\varepsilon g(\xi) 
\end{align}  where $f:\S^n\rightarrow (0, \infty)$ is a positive function, $g: \S^n\rightarrow \mathbb{R}$ is a function,  and $\varepsilon\in (-\varepsilon_0, \varepsilon_0)$ for small enough $\varepsilon_0>0$.  More precisely,  for $\Upsilon\subseteq \S^n$ a closed
set that is not contained in any closed hemisphere of $\S^n$, $\Theta=(G, \Psi, \lambda)$ a triple such that $G\in \mathcal{C}$,  $\Psi \in \mathcal{C}_I\cup\mathcal{C}_d$ and $\lambda$ is a nonzero finite Borel measure on $\S^n$  absolutely continuous w.r.t.~$d\xi$, one has  
 \begin{align}  \label{variation-11-27-12}
    \lim_{\varepsilon\rightarrow 0}\frac{\widetilde{V}_{G,\lambda}([f_{\varepsilon}])-\widetilde{V}_{G,\lambda}([f])}{\varepsilon}=
    \int_{\Upsilon} g(u)\, d\widetilde{C}_{\Theta}([f], u),
    \end{align} where $[f]$ denotes the
Wulff shape generated by  $f$  \begin{align*} 
[f]= \bigcap_{\xi\in\Omega}\big\{x\in\R^{n+1}: \langle x, \xi\rangle  \leq f(\xi)\big\}. 
\end{align*}

The following Musielak-Orlicz-Gauss problem has been posed in \cite{HSYZ-21}:
\textsl{Let $\Theta=(G, \Psi, \lambda)$ be a given triple such that $G\in \mathcal{C}, \Psi\in \mathcal{C}$ and $\lambda$ is a nonzero finite Borel measure on $\S^n$. Under what conditions on the triple  $\Theta$ and a nonzero finite Borel measure $\mu$ on $\S^n$ do there exist a $\Omega\in \mathcal{K}_0$ (ideally) and a constant $\tau\in\mathbb{R}$ such that 
\begin{eqnarray}\label{GDOMP}
\,d\mu = \tau \,d\widetilde{C}_{\Theta}(\Omega,\cdot)?
\end{eqnarray} } In view of \eqref{R-N-expression},  one can rewrite \eqref{GDOMP}  as
\begin{eqnarray}\label{GDOMP1}
\psi(u_\Omega(\cdot), \cdot)\,d\mu = \tau \,d\widetilde{C}_{G,\lambda}(\Omega,\cdot).
\end{eqnarray}

Under the condition that $G$ is decreasing on its first variable, the existence of solutions to the Musielak-Orlicz-Gauss image problem is established in \cite{HSYZ-21}.
Although \eqref{GDOMP} and \eqref{GDOMP1} are equivalent when $\Omega\in \mathcal{K}_0$, the latter one arguably has better features and in particular allows the extension of the  Musielak-Orlicz-Gauss problem to  $\Omega\in \mathcal{K}$. Such an extension is extremely important when $G$ is increasing in its first variable, as one can see in many results in the literature, such as \cite{BF19,CL19,GHXY18}. In this paper, our primary goal  is to study the following extended Musielak-Orlicz-Gauss problem (in particular, when $G$ is increasing in its first variable). 
\begin{problem}  \label{extended-MOGP} Let $\Theta=(G, \Psi, \lambda)$ be a given triple where $G: [0, \infty)\times \S^n\rightarrow [0, \infty)$ and $\Psi:  [0, \infty)\times \S^n\rightarrow [0, \infty)$ are two continuous functions, and $\lambda$ is a nonzero finite Borel measure on $\S^n$. Under what conditions on the triple  $\Theta$ and a nonzero finite Borel measure $\mu$ on $\S^n$ do there exist a $\Omega\in \mathcal{K}$ and a constant $\tau\in\mathbb{R}$ such that  \eqref{GDOMP1} holds. \end{problem}

We would like to provide some intuitions and pictures behind the introduction of the  Musielak-Orlicz functions into the areas of parabolic flows or convex geometry, especially for the Musielak-Orlicz-Gauss image problem. First of all, the well-known additions in literatures, such as the $L_p$ and the Orlicz additions \cite{Firey-62, GHW14, XJL}, of convex bodies are ``uniform" on directions $\xi\in \S^n,$ in the sense that 
the functions $t^p$ for the $L_p$ addition ($0\neq p\in \mathbb{R}$) and $\varphi(t)$ for the Orlicz addition, for $t\in (0, \infty)$, are independent of directions. This is not the case for the Musielak-Orlicz additions of convex bodies, and hence the Musielak-Orlicz
additions of convex bodies greatly enrich the algebraic structures and analytic aspects  on the set of convex bodies. We believe that these will bring new interesting results and inequalities in convex geometry, for example the Musielak-Orlicz analogues of the Orlicz-Brunn-Minkowski, Orlicz centroid and Orlicz projection inequalities \cite{GHW14, LYZ10 Adv, LYZ10 JDG, XJL}. Secondly, the variational formula \eqref{variation-11-27-12} provides a geometric intuition for the measure $\widetilde{C}_{\Theta}(\Omega,\cdot)$. That is, when extending the convex body $\Omega  \in \mathcal{K}_0$ based on the Musielak-Orlicz addition \eqref{genplus21},  the measure $\widetilde{C}_{\Theta}(\Omega,\cdot)$ serves as ``the base area" to estimate $
\widetilde{V}_{G,\lambda}([(u_{\Omega})_\varepsilon]\setminus \Omega)$  for  $\varepsilon>0$ small enough where $(u_{\Omega})_\varepsilon$ is given by \begin{align*}  \Psi\big((u_{\Omega})_{\varepsilon}(\xi), \xi\big)= \Psi\big(u_{\Omega}(\xi), \xi\big)+\varepsilon g(\xi).\end{align*}
 This coincides with the geometric meanings of the surface area and its recent extensions in, e.g., \cite{BLYZZ20, GHWXY18, HLYZ2010, HLYZ16, Lu93, LYZ18}. On the other hand, it has been proved in \cite[(3.3)]{HSYZ-21} that, for $\Omega\in \mathcal{K}_0$, 
      \begin{align*}  \frac{\,d \widetilde{C}_{\Theta}(\Omega,\xi)} {\,d\lambda^*(\Omega, \xi)} = \frac{\rho_{\Omega} (\alpha^*_{\Omega}(\xi)) G_z(\rho_{\Omega}(\alpha^*_{\Omega}(\xi)), \alpha^*_{\Omega}(\xi))}{u_{\Omega}(\xi)\Psi_z(u_{\Omega}(\xi), \xi)}\ \ \ \mathrm{for}\ \ \xi\in \S^n,\end{align*} where $\lambda^*(\Omega, \cdot)= \widetilde{C}_{(\log t, \log t, \lambda)}(\Omega,\cdot)$ is the Gauss image measure \cite{BLYZZ20}. Analytically, the measure $\widetilde{C}_{\Theta}(\Omega,\cdot)$ is absolutely continuous with respect to $\lambda^*(\Omega, \cdot)$ and can be viewed as a weighted Gauss image measure (in arguably the most general way). Moreover, the Musielak-Orlicz-Gauss image problem integrates the Minkowski type problems  \cite{ FH, GHWXY18, GHXY18,HLYZ2010, HLYZ16, HLYZ-Alex,   Lu93,  LYZ18,  XY2017-1, ZSY2017}  and the Gauss image problem \cite{BLYZZ20} into a unified formula. This unification will provide a uniform way to deal with the problem to characterize measures derived from variational formulas based on the algebraic combinations of convex bodies, and in particular will help to advance the recent development of the measure theoretical Brunn-Minkowski theory of convex bodies.      
Last but not the least, the Musielak-Orlicz functions bring extra ingredients to parabolic flows and partial differential equations through the Monge-Amp\`ere type equation related to the  Musielak-Orlicz-Gauss image problem (see \eqref{elliptic}).  

Our main result can be summarized in the following theorem.  For convenience, let $$\widetilde{C}_{G,\lambda}(\Omega,\S^n)=\int_{\S^n}\,d\widetilde{C}_{G,\lambda}(\Omega,\xi).$$ Let $\mathcal{G}_I^0$ be the class of continuous functions $G: [0,\infty)\times \mathbb{S}^n\to [0,\infty)$ such that 
\begin{itemize}
\item $zG_z(z,\xi)$ is continuous on $[0,\infty)\times \mathbb{S}^n$;
\item $G_z > 0$ on $(0,\infty)\times \mathbb{S}^n$;
\item $G(0, \xi)=0$ and $zG_z(z,\xi) = 0$ at $z = 0$ for $\xi\in \mathbb{S}^n$. 
\end{itemize}
Clearly, if $G\in \mathcal{G}_I^0$, for any $\xi\in \S^n$, then $G(z, \xi)$ is strictly increasing on $z\in (0, \infty)$.  Again, we write $\psi: [0,\infty)\times\S^n\to [0, \infty)$ for  the function $\psi=z\Psi_z$. In particular, $\psi(0, \xi)=\lim_{z\rightarrow 0^+} \psi(z, \xi)$ for each $\xi \in \S^n$. 
 
\begin{theorem}\label{main1}
 Let $G \in \mathcal{G}_I^0$, $\Psi\in \mathcal{G}_I^0$ and $\lambda$ be a nonzero finite Borel measure on $\S^n$. Assume the following conditions on $G$, $\lambda$ and $\Psi$. \\
 $(\mathrm{i})$  $d\lambda(\xi)=p_\lambda(\xi)d\xi$ where the function $p_\lambda:\S^n\to(0,\infty)$ is
  continuous. \\
 $(\mathrm{ii})$  For all $x\in \S^n$, the following holds:  \begin{equation}\label{Psi-comp-condition} \lim_{s\rightarrow +\infty}\Psi(s, x)=+\infty. \end{equation}  
   
 Let $\mu$ be a nonzero finite Borel measure on $\mathbb{S}^n$ that is not concentrated on any closed hemisphere. Then there is a convex body $\Omega\in\mathcal{K}$ such that $\eqref{GDOMP1}$ holds, with the constant \begin{eqnarray*}\tau=
\frac{1}{ \widetilde{C}_{G,\lambda}(\Omega,\S^n)} \int_{\S^n}\psi(u_\Omega(x), x)\,d\mu(x).
\end{eqnarray*}  \end{theorem}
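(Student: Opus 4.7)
My plan is to combine a geometric flow method with a measure-approximation argument. Using the change of variables $\xi = \alpha^*_\Omega(x)$ and the Jacobian identity $d\xi = \frac{u\,\det(\nabla^2 u + u I)}{r^{n+1}}\,dx$ (with $r = \sqrt{|\nabla u|^2 + u^2}$ and $\xi = (\nabla u + u x)/r$), together with the definition \eqref{GDOCM def-log} of $\widetilde C_{G,\lambda}$, equation \eqref{GDOMP1} for a smooth $\Omega \in \mathcal{K}_0$ is equivalent to the Monge--Ampère equation on $\S^n$
\begin{equation*}
\psi(u,x)\,f(x) \;=\; \tau \,\frac{G_z(r,\xi)\,p_\lambda(\xi)\,u\,\det(\nabla^2 u + u I)}{r^{n}},
\end{equation*}
where $d\mu = f\,dx$. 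The strategy is to first solve this PDE when $f$ is smooth and strictly positive by a parabolic flow, then approximate a general $\mu$ by such measures $\mu_k$ and pass to a Hausdorff limit.

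For smooth positive $f$, I evolve an initial $\Omega_0 \in \mathcal{K}_0$ (say a ball) via
\begin{equation*}
\partial_t u(x,t) \;=\; u\left[\,\tau(t)\,\frac{G_z(r,\xi)\,p_\lambda(\xi)\,u\,\det(\nabla^2 u + u I)}{r^{n}\,\psi(u,x)\,f(x)} \,-\, 1\,\right],
\end{equation*}
with $\tau(t)$ forced by conservation of $\int_{\S^n}\Psi(u(\cdot,t),x)\,d\mu$. Using $\Psi_z u = \psi$ and the same change-of-variables identity, a direct computation pulls back the $dx$-integral to a $d\xi$-integral and gives $\tau(t) = \bigl(\widetilde C_{G,\lambda}(\Omega_t,\S^n)\bigr)^{-1}\int_{\S^n}\psi(u(\cdot,t),x)\,d\mu$, exactly the constant in the statement. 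Conservation of the $\Psi$-integral combined with the coercivity hypothesis $\Psi(s,x)\to\infty$ furnishes a uniform upper bound on $\max u$, and prevents $\max u$ from collapsing to $0$ since $\Psi(0,\xi) = 0$; promoting this to a two-sided positive $C^0$ bound uses a maximum-principle argument on $\log u$ and the non-concentration of $\mu$. Standard auxiliary-function estimates for Monge--Ampère flows (in the spirit of \cite{ChenWX, LSW16, SY20}) then yield two-sided bounds on the principal radii, after which Krylov--Safonov and Schauder theory give uniform $C^{2,\alpha}$ estimates and long-time existence. For $t \to \infty$ I intend to establish monotonicity of $\widetilde V_{G,\lambda}(\Omega_t)$ (after, if needed, replacing the bracketed multiplier by its logarithm to convert the sign into a Jensen inequality), extract a subsequential Hausdorff limit by $C^{2,\alpha}$ compactness, and use monotonicity to conclude that the limit is a stationary solution.

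For the general Borel measure $\mu$ of the theorem, I approximate weakly by $\mu_k$ with $d\mu_k = f_k\,dx$, $f_k \in C^\infty(\S^n)$ strictly positive and uniformly non-concentrated on any closed hemisphere. The smooth stage produces $\Omega_k \in \mathcal{K}_0$ and scalars $\tau_k$. Uniform upper/lower bounds on $\max u_{\Omega_k}$, together with Blaschke selection, extract a Hausdorff-convergent subsequence $\Omega_k \to \Omega \in \mathcal{K}$; the origin may land on $\partial\Omega$, which is precisely why the theorem is stated for $\mathcal K$ and why the extension of $\widetilde C_{\Theta}(\Omega,\cdot)$ provided in Definition \ref{def-unified} is needed. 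Passing the integral identity to the limit uses continuity of $G,\Psi$ on $[0,\infty)\times\S^n$ and the boundary conditions $G(0,\xi) = zG_z(0,\xi) = 0$ from $\mathcal{G}_I^0$, which ensure that contributions from the cone $N(\Omega,o)\cap\S^n$ (where $r_\Omega = 0$) vanish and that the densities pair continuously with the weak convergence $\mu_k \to \mu$.

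The principal obstacle is the uniform positive $C^0$ lower bound on the support function, both along the flow and across the approximating sequence. Because $G$ is \emph{increasing} there is no monotone functional that directly bounds $u$ from below; the bound must be wrung out of the conserved $\Psi$-integral, the coercivity $\Psi\to\infty$, and the non-concentration of $\mu$, and it may genuinely deteriorate in the limit $k\to\infty$ (so that $\Omega$ ends up only in $\mathcal K$). A closely related secondary difficulty is verifying the weak continuity of $\widetilde C_{G,\lambda}(\Omega_k,\cdot)$ at limit bodies with the origin on the boundary, which is precisely where the normalizations defining $\mathcal{G}_I^0$ at $z = 0$ are used in an essential way.
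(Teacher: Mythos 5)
Your overall architecture (solve the Monge--Amp\`ere equation for smooth positive densities by a normalized curvature flow, then approximate a general $\mu$ weakly and pass to a Hausdorff limit in $\mathcal{K}$) is the same as the paper's, and several ingredients are sound: the identification of $\tau(t)$ with $\bigl(\widetilde C_{G,\lambda}(\Omega_t,\S^n)\bigr)^{-1}\int_{\S^n}\psi(u,\cdot)\,d\mu$ via the change of variables \eqref{Holder-required-1}, the upper bound on $\max u$ from coercivity of $\Psi$, and the use of non-concentration of $\mu$ plus Blaschke selection in the approximation stage. Your flow is a legitimate variant of \eqref{SSF} with the roles of the two functionals swapped: you conserve $\int\Psi(u,\cdot)\,d\mu$ and hope to make $\widetilde V_{G,\lambda}$ monotone, whereas the paper conserves $\widetilde V_{G,\lambda}$ and makes $\int f\,\Psi(u,\cdot)\,dx$ decrease via H\"older's inequality.

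There is, however, a genuine gap at the step you yourself call the principal obstacle: the positive lower bound for $u(\cdot,t)$ along the flow. At the minimum point of $u$ one has $r=u$, $\nabla u=0$ and $\det(\nabla^2u+uI)\ge u^n$, and the sign of $\partial_t u$ there is governed by the behaviour of $sG_z(s,x)/\psi(s,x)$ as $s\to0^+$. Under the hypotheses of Theorem \ref{main1} this quantity need not blow up --- e.g.\ $G=t^q$, $\Psi=t^p$ with $q\ge p>0$ gives $sG_z/\psi=(q/p)s^{q-p}$, bounded or tending to $0$ --- and then the maximum principle (on $u$ or on $\log u$; taking the logarithm of the multiplier changes nothing here) gives no obstruction to $\min u\to0$. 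This is precisely the dichotomy \eqref{good-condition-1} versus \eqref{good-condition-2} in the paper: in the second case the paper replaces $\psi$ near $s=0$ by $\widehat\psi_\varepsilon(s,x)=G_z(s,\alpha^*(x))s^{1+\varepsilon}$ (the flow \eqref{SF2}), which restores an $\varepsilon$-dependent lower bound (Lemma \ref{lowerbound}), and then compensates for the loss of uniformity by the $\varepsilon$-independent two-sided width estimate of Lemma \ref{widthbound}, extracted from the conservation of $\widetilde V_{G,\lambda}$; only after that can one send $\varepsilon\to0$ and land in $\mathcal{K}$. Without such a regularization your flow may degenerate before any limit is extracted, so long-time existence, the $C^2$ estimates, and convergence are not justified as written. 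A secondary point: in the approximation stage the uniform-in-$k$ upper bound on $\max u_{\Omega_k}$ must be argued explicitly (the paper compares against a fixed ball in the constraint class via the variational characterization of the flow limits, then uses the non-concentration estimate \eqref{NCHj}); your proposal gestures at this but does not supply the comparison.
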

We would like to mention that Theorem \ref{main1} holds for  $G=t^q$ with $q>0$, $\Psi=t^p$ with $p>0$ and $\,d\lambda=\,d\xi$. This not only covers the results for the $L_p$ dual Minkowski problem for $p>1$ and $q>0$ by B\"{o}r\"{o}czky and Fodor \cite{BF19}, but also obtains  solutions to the unsolved case $0<p\leq 1$ and $q>0$. Similarly, when $\,d\lambda=\,d\xi$ and $\Psi(t, \xi)=\varphi(t)$ for all $(t, \xi)\in [0, \infty)\times \S^n$ is an Orlicz function, Theorem \ref{main1} covers the case in \cite[Theorem 6.3]{GHXY18}  but goes beyond, namely Theorem \ref{main1}  removes the condition that $\lim_{t\rightarrow 0^+} \varphi'(t)=\lim_{t\rightarrow 0^+} \Psi_z(t, \xi)=0$, a condition crucial in \cite[Theorem 6.3]{GHXY18}.  Our proof of Theorem \ref{main1} is based on the study of a suitably designed parabolic flow  and the use of approximation argument. The idea of using the parabolic flow comes from the fact that Problem \ref{extended-MOGP} can be rewritten as a Monge-Amp\`ere type equation on $\mathbb{S}^n$. Assume that $\lambda$ is a nonzero finite Borel measure on $\S^n$ and $\,d\lambda(\xi)=p_\lambda(\xi)\,d\xi$ with the function $p_\lambda:\S^n\to(0,\infty)$ being continuous.  When the pregiven measure $\mu$ has a density $f$ with respect to $\,d\xi$, \eqref{GDOMP}  reduces to solving the following Monge-Amp\`ere type equation on $\mathbb{S}^n$ (see \cite{HSYZ-21}):
\begin{eqnarray}\label{elliptic}\ \ \ \ \ \ \ \ 
u(u^2+|\nabla u|^2)^{-\frac{n}{2}}G_z(\sqrt{u^2+|\nabla u|^2},\xi)p_\lambda(\xi)\det(\nabla^2 u+uI)=\gamma f(x){\psi(u,x)},
\end{eqnarray}
where $\nabla$ and $\nabla^2$ are the gradient and Hessian operators with respect to an orthonormal frame on $\mathbb{S}^n$, $\gamma>0$ is a constant, $I$ is the identity matrix, and $\xi=\alpha_{\Omega_u}^*(x)$ where $\alpha_{\Omega_u}^*$ is the reverse radial Gauss map of $\Omega_u$ -- the convex body whose support function is $u(x)$ for $x\in \S^n$.

Let  $f:\mathbb{S}^n\rightarrow (0,\infty)$ be  a smooth positive function, and $\Omega_0\in \mathcal{K}_0$ be  a convex body such that  $\mathcal{M}_0=\partial\Omega_0$ is a smooth and uniformly convex hypersurface.  Equation \eqref{elliptic} suggests the following  curvature flow, by letting $r=\sqrt{u^2+|\nabla u|^2}$, 
\begin{equation}\label{SSF}
\left\{
\begin{array}{ll}
\frac{\partial{X}}{\partial{t}} (x,t)&=\left(-f(\nu){\psi} (u,x)r^{n}G_z(r,\xi)^{-1}p^{-1}_\lambda(\xi)K+\eta(t)u\right)\nu ,\\\\
X(x,0)&=X_0(x),
\end{array}
\right.
\end{equation}
where $X(\cdot,t): \mathbb{S}^n\to\mathbb{R}^{n+1}$ is  the embedding that parameterizes a family of convex hypersurfaces $\mathcal{M}_t$ (in particular, $X_0$ is the parametrizsation of $\mathcal{M}_0$), 
$\Omega_t$ is the convex body circumscribed by $\mathcal{M}_t$,  $K$ denotes the Gauss curvature of $ \Omega_t$ at $X(x,t)$, $\nu$ denotes the unit outer normal of $ \Omega_t$ at $X(x,t)$, $u$ is the support function of  $ \Omega_t$, $\xi=\alpha^*_{\Omega_t}(x)$, and
\begin{eqnarray}\label{eta def}
\eta(t)=\frac{\int_{\S^n}f {\psi}(u,x) dx}{\int_{\S^n}rG_z(r,\xi) p_\lambda(\xi)d\xi}.
\end{eqnarray} As \eqref{SSF} involves the Musielak-Orlicz functions, such a flow could be named a Musielak-Orlicz-Gauss curvature flow, which is arguably the most general curvature flow related to Minkowski type and Gauss image problems and contains all previous well-studied flows \cite{BIS16, Bryanivakisch, CLLN20, ChenTWX, ChenWX, ChWang00, LSW16, LL20, LiuLu1, SY20} as its special cases. 

Assume enough smoothness on the functions $f$, $p_\lambda$, $G$ and $\psi$. We show that,  under the condition that  \begin{equation}\label{good-condition-1}    
\liminf_{s\to0^+}\frac{sG_z(s,x)}{\psi(s,x)}=\infty, \ \ \mathrm{for\ all}\ x\in\S^n, 
\end{equation}   the flow \eqref{SSF} deforms a smooth and uniformly convex hypersurface to a limit hypersurface satisfying \eqref{GDOMP}. Establishing  a priori estimate is the key ingredient for such a convergence; moreover, the $C^0$ estimate in this case can be  obtained by the maximum principle. However, when the condition  \eqref{good-condition-1} is replaced by the following condition: 
\begin{equation}\label{good-condition-2}   
\liminf_{s\to 0^+}\frac{sG_z(s,x)}{\psi(s,x)}< \infty, \ \ \mathrm{for\ some}\, \ x\in\S^n, 
\end{equation}   the $C^0$ estimate {\it cannot}  be obtained by the maximum principle directly. In order to overcome this obstruction,  the flow \eqref{SSF} shall be replaced by a more carefully designed one with the function $\psi$ replaced by the smooth function $\widehat{\psi}_\varepsilon: [0,\infty)\times\S^n\rightarrow [0, \infty)$ as follows:
\begin{equation}\label{hatpsi eps}
\widehat{\psi}_{\varepsilon}(s,x)=\left\{
\begin{array}{ll}
\psi(s,x), &\textrm{if $s\ge2\varepsilon$},\\\\
G_z(s,\alpha^*(x))s^{1+\varepsilon},&\textrm{if $0\le s\le\varepsilon$},
\end{array}
\right.
\end{equation} and $\widehat{\psi}_\varepsilon(s,x)\leq C_0$ for  $(s, x)\in(\varepsilon, 2\varepsilon)\times \S^n$ is chosen  so that $\widehat{\psi}_{\varepsilon}$ is  smooth on $[0, \infty)\times \S^n$ and $\widehat{\psi}_\varepsilon(s,x)>0$ for all $(s,x)\in(0,\infty)\times\S^n$. Hereafter, 
 \begin{eqnarray} \label{constant-c-0} 
C_0=\max\{1, \max_{(s,x)\in[0,2]\times\S^n }\psi(s,x)\}
\end{eqnarray}  and $\varepsilon\in(0,\delta)$ satisfies \begin{equation*}   \max_{(s,\xi)\in [0, \varepsilon]\times\S^n}G_z(s,\xi)s^{1+\varepsilon}\le s^\varepsilon\le C_0,\end{equation*} where $\delta\in (0, 1)$ is a constant such that  \begin{eqnarray} \max_{(s,\xi)\in [0,\delta]\times\S^n} sG_z(s,\xi)\le 1\label{def-delta}\end{eqnarray} (the existence of $\delta$ is guaranteed if  $zG_z(z,\xi)$ is continuous on $[0,\infty)\times \S^n$ and $zG_z(z,\xi) = 0$ at $z = 0$ for $\xi\in \S^n$).   In other words, the following curvature flow is considered: \begin{equation}\label{SF2}
\left\{
\begin{array}{ll}
\frac{\partial{X_{\varepsilon}}}{\partial{t}} (x,t)&=\left(-f(\nu)\widehat{\psi}_\varepsilon(u_{\varepsilon}, x)r^{n}G_z(r,\xi)^{-1}p_\lambda^{-1}(\xi) K+\eta_\varepsilon(t)u_{\varepsilon}\right)\nu ,\\\\
X_{\varepsilon}(x,0)&=X_0(x),
\end{array}
\right.
\end{equation} 
where $X_\varepsilon(\cdot,t): \mathbb{S}^n\to\mathbb{R}^{n+1}$ parameterizes convex hypersurface
$\mathcal{M}_t^\varepsilon$, 
$u_{\varepsilon}$ denotes the support function of the convex body $\Omega^{\varepsilon}_t$
circumscribed by $\mathcal{M}_t^\varepsilon$, and 
\begin{eqnarray}\label{eta epi def}
\eta_\varepsilon(t)=\frac{\int_{\S^n}f\widehat{\psi}_\varepsilon(u,x) dx}{\int_{\S^n}rG_z(r,\xi) p_\lambda(\xi)d\xi}.
\end{eqnarray}

It will be proved in Lemma \ref{upperbound} that  $u_\varepsilon(\cdot,t)$ is uniformly bounded from above. A positive uniform lower bound estimate for  $u_\varepsilon(\cdot,t)$ will be given in Lemma \ref{lowerbound}, and this argument relies on the construction for $\widehat{\psi}_\varepsilon$ in \eqref{hatpsi eps}.   The $C^0$ estimates make it possible to further obtain the higher order estimates and to show that flow \eqref{SF2} exists for all time. These, together with Lemma \ref{monotone1},  imply the existence of a sequence of times $t_i\to\infty$ such that $u_\varepsilon(\cdot,t_i)$ converges to a positive and uniformly convex function $u_{\varepsilon,\infty}\in C^\infty(\mathbb{S}^n)$ solving the equation below
\begin{eqnarray}\label{elliptic eq}\ \ \ \ \ \ \ \ 
u(u^2+|\nabla u|^2)^{-\frac{n}{2}}G_z(\sqrt{u^2+|\nabla u|^2},\xi)p_\lambda(\xi)\det(\nabla^2u+uI)=\gamma_\varepsilon f(x){\widehat{\psi}_\varepsilon(u, x)},
\end{eqnarray}
where $\gamma_\varepsilon>0$ is a constant. Furthermore, we prove that there is a sequence of $\varepsilon_i\to0$ such that $u_{\varepsilon_i,\infty}$ locally uniformly converges to a weak solution of \eqref{elliptic}. Throughout this paper, we say that $u\in C^2(\S^n)$, the set of functions on $\S^n$ with continuous second order derivatives,  is uniformly convex if the matrix $\nabla^2 u+uI$ is positively definite.  Thus, the following theorem can be obtained, which provides solutions to \eqref{elliptic} (and hence to the extended Musielak-Orlicz-Gauss problem) when $d\mu=fd\xi$.

\begin{theorem}\label{main3}  Let  $G\in \mathcal{G}_I^0$ be a smooth function. 
Suppose that $\,d\mu(\xi)=f(\xi)\,d\xi$ and $\,d\lambda(\xi)=p_\lambda(\xi)\,d\xi$ with $f$ and $p_\lambda$ being smooth and strictly positive on $\mathbb{S}^n$. Let  $\Psi\in \mathcal{G}_I^0$ be a smooth function satisfying \eqref{Psi-comp-condition}.
  The following statements hold.\\
$(\mathrm{i})$ If $G$ and $\psi$ satisfy \eqref{good-condition-1}, then one can find an $\Omega\in\mathcal{K}_0$  such that \eqref{GDOMP} holds;\\
$(\mathrm{ii})$ If $G$ and $\psi$ satisfy \eqref{good-condition-2},  then one can find an  $\Omega\in\mathcal{K}$ such that \eqref{GDOMP1} holds.
\end{theorem}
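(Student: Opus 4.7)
The plan is to follow the parabolic-flow roadmap laid out in the introduction: prove part $(\mathrm{i})$ by analyzing the flow \eqref{SSF} directly, and prove part $(\mathrm{ii})$ by first analyzing the regularized flow \eqref{SF2} and then sending $\varepsilon\to 0$. The first structural step is to verify that the normalizing factor $\eta(t)$ in \eqref{eta def} (respectively $\eta_\varepsilon(t)$ in \eqref{eta epi def}) is exactly the one needed to keep the $\Psi$-weighted functional $\int_{\S^n}\Psi(u,x)f(x)\,dx$ (respectively its $\widehat{\psi}_\varepsilon$-analog) invariant along the flow; combined with the variational formula \eqref{variation-11-27-12}, this forces the general dual volume $\widetilde V_{G,\lambda}(\Omega_t)$ to be monotone. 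The monotonicity plus boundedness from the a priori estimates is what will drive the flow to a stationary solution of the elliptic equation \eqref{elliptic eq}.

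Next come the a priori estimates. The preservation of the $\Psi$-functional together with the coercivity \eqref{Psi-comp-condition} yields a uniform upper bound for $u(\cdot,t)$ (Lemma \ref{upperbound}). For the positive lower bound one examines the minimum point of $u$, where $|\nabla u|=0$, so $r=u$ and the evolution reduces to a scalar ODE whose sign is governed by the ratio $sG_z(s,\xi)/\psi(s,\xi)$ at small $s$. In case $(\mathrm{i})$, assumption \eqref{good-condition-1} forces this ratio to blow up, so the maximum principle prevents $u_{\min}$ from collapsing to $0$. In case $(\mathrm{ii})$ this direct argument fails; the regularization \eqref{hatpsi eps} is engineered so that for $s\le\varepsilon$ the ratio $\widehat{\psi}_\varepsilon(s,x)/[s G_z(s,\alpha^*(x))]=s^{\varepsilon}$ vanishes as $s\to 0^+$, restoring the same blow-up mechanism and producing a (possibly $\varepsilon$-dependent) positive lower bound for $u_\varepsilon$ (Lemma \ref{lowerbound}). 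With the $C^0$ bounds in hand, the standard chain — support-function gradient estimates, a $C^2$ bound on $\nabla^2 u+uI$ making the equation uniformly parabolic, and then Krylov--Safonov plus parabolic Schauder theory — yields $C^\infty$ estimates and long-time existence.

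The convergence step combines the monotonicity of $\widetilde V_{G,\lambda}$ with these estimates: a sequence $t_i\to\infty$ along which $u_\varepsilon(\cdot,t_i)$ converges smoothly to a positive, uniformly convex $u_{\varepsilon,\infty}$ can be extracted, and stationarity forces the dissipation to vanish, which together with the explicit form of $\eta_\varepsilon$ identifies $u_{\varepsilon,\infty}$ as a classical solution of \eqref{elliptic eq} for some $\gamma_\varepsilon>0$. Translating the PDE back into measure language gives \eqref{GDOMP1} for $\Omega_\varepsilon=\Omega_{u_{\varepsilon,\infty}}\in\mathcal{K}_0$; for part $(\mathrm{i})$ this already closes the argument. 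For part $(\mathrm{ii})$ one must pass $\varepsilon\to 0$: the $\varepsilon$-uniform upper bound on $u_{\varepsilon,\infty}$ yields, up to subsequence, a convex limit $u_\infty$ locally uniformly on $\S^n$, whose Wulff shape $\Omega\in\mathcal{K}$ may now carry the origin on its boundary. Controlling $\gamma_\varepsilon$ and verifying that the measures $\widehat{\psi}_{\varepsilon_i}(u_{\varepsilon_i,\infty},\cdot)\,d\mu$ and $\widetilde C_{G,\lambda}(\Omega_{\varepsilon_i},\cdot)$ converge weakly to $\psi(u_\infty,\cdot)\,d\mu$ and $\widetilde C_{G,\lambda}(\Omega,\cdot)$ respectively identifies $u_\infty$ as a weak solution of \eqref{GDOMP1}. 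The hypothesis that $\mu$ is not concentrated on a closed hemisphere is what prevents $\Omega$ from degenerating to a lower-dimensional set in the limit.

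The main obstacle is the positive $C^0$ bound in case $(\mathrm{ii})$ and the subsequent limit $\varepsilon\to 0$. In case $(\mathrm{i})$ the flow stays inside $\mathcal{K}_0$ and every estimate closes in the classical way, while in case $(\mathrm{ii})$ the target $\Omega$ is permitted to lie in $\mathcal{K}\setminus\mathcal{K}_0$, so an $\varepsilon$-uniform positive lower bound cannot be expected. The delicate choice of the exponent $s^{1+\varepsilon}$ in \eqref{hatpsi eps}, together with the careful interplay between $\varepsilon$, the smallness region $s\le\varepsilon$, the normalizing constant $C_0$ in \eqref{constant-c-0}, and the behavior of $G_z(s,\xi)$ as $s\to 0^+$ (controlled by \eqref{def-delta}), is precisely what allows one to solve the problem classically at the regularized level and then recover a genuine weak solution of the extended Musielak-Orlicz-Gauss problem after passing to the limit.
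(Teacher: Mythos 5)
Your overall roadmap (regularized flow, $C^0$ bounds obtained at the minimum point of $u$, Krylov--Safonov and parabolic Schauder theory, subsequential convergence in $t$, then the limit $\varepsilon\to 0$) matches the paper's, but you have the roles of the two key functionals exactly reversed, and this breaks the argument at two load-bearing points. With $\eta(t)$ as in \eqref{eta def} (and $\eta_\varepsilon(t)$ as in \eqref{eta epi def}), a direct computation using \eqref{rSSF} and the Jacobian identity \eqref{J(xi)} shows that it is the general dual volume $\widetilde V_{G,\lambda}(\Omega_t)$ that is \emph{conserved} along the flow (Lemma \ref{Volume}, Lemma \ref{monotone1}), while the functional $\mathcal{J}(u)=\int_{\S^n}f\,\Psi(u,x)\,dx$ (resp.\ its $\widehat\Psi_\varepsilon$-analog) is \emph{non-increasing} by H\"older's inequality (Lemma \ref{mono}), with equality precisely at solutions of the elliptic equation. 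Your claim that this $\eta(t)$ keeps $\int_{\S^n}f\Psi(u,x)\,dx$ invariant is false: preservation of that functional would require $\eta=\big(\int f^2\psi^2u^{-1}r^nG_z^{-1}p_\lambda^{-1}K\,dx\big)\big/\big(\int f\psi\,dx\big)$, which differs from \eqref{eta def}; and your claim that $\widetilde V_{G,\lambda}(\Omega_t)$ is monotone is vacuous, since its time derivative is identically zero, so it cannot ``drive the flow to a stationary solution.'' The convergence mechanism --- extracting $t_i\to\infty$ along which the dissipation vanishes and identifying the limit through the equality case of H\"older --- must be run on $\mathcal{J}_\varepsilon$, not on $\widetilde V_{G,\lambda}$.

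The second consequence of this reversal is that you lose the actual non-degeneracy mechanism in the limit $\varepsilon\to 0$ of part $(\mathrm{ii})$. In Theorem \ref{main3} the measure $\mu=f\,d\xi$ has a positive smooth density, so ``not concentrated on a closed hemisphere'' is automatic and carries no quantitative information; that hypothesis does real work only in the approximation argument for Theorem \ref{main1}. What prevents $\Omega_{\varepsilon_i,\infty}$ from collapsing to a lower-dimensional set is the conservation law $\widetilde V_{G,\lambda}(\Omega_t^\varepsilon)=\widetilde V_{G,\lambda}(\Omega_0)>0$ combined with the $\varepsilon$-independent positive lower bound on the minimal width (Lemma \ref{widthbound}), which exploits the monotonicity of $G$ in its first variable; a compact convex set with empty interior has $\widetilde V_{G,\lambda}=0$, so the conserved quantity rules it out. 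Without identifying the correct conserved positive quantity, your passage to the limit does not exclude degeneration. The remainder of your outline --- the behavior of $\widehat\psi_\varepsilon(s,x)/(sG_z)$ at the minimum point, the $\varepsilon$-dependent lower bound, and the weak convergence of $\widetilde C_{G,\lambda}(\Omega_{\varepsilon_i,\infty},\cdot)$ via Proposition \ref{prop-c-g-1} --- is consistent with the paper's proof.
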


For a general measure $\mu$,  there is a sequence of measures $\{\mu_i\}_{i\in \mathbb{N}}$,
where $\,d\mu_i =f_i d\xi$ with $f_i$ being smooth and strictly positive on $\mathbb{S}^n$,
such that $\mu_i$ converges to $\mu$ weakly.
Theorem \ref{main1} is then proved by the virtue of Theorem \ref{main3} and an approximation argument.

This paper is organized as follows. In Section \ref{section-2},
some properties of convex hypersurfaces, and the flows \eqref{SSF} and \eqref{SF2} are presented.
In particular we show the strict monotonicity of functionals \eqref{function} and \eqref{functional} (see Lemmas \ref{mono} and \ref{monotone1}, respectively) and the preservation of $\widetilde{V}_{G,\lambda}(\cdot)$ along the flows (see Lemmas \ref{Volume} and  \ref{monotone1}). 
The $C^0$ estimates for the flows \eqref{SSF} and \eqref{SF2} when smooth functions $G$ and $\Psi$ satisfy the assumptions in Theorem \ref{main3} are established in Section \ref{section-3}.  
Section \ref{section-4} is dedicated to the proofs of the long time existence of the flows \eqref{SSF} and \eqref{SF2} and the proof of Theorem \ref{main3} as well.  Moreover, the existence of solutions to Problem \ref{extended-MOGP},  i.e., Theorem \ref{main1},  is proved by an approximation argument for general $\mu$ that is not concentrated on any closed hemisphere.
Section \ref{section-5} provides the second order derivative estimates which will be used in the study of flows \eqref{SSF} and \eqref{SF2}.

\section{Preliminary and properties of the flows}\label{section-2}
Let us recall some basic notations. In $\mathbb{R}^{n+1}$, let $\langle x, y\rangle $ be the inner product of $x, y\in \mathbb{R}^{n+1}$.  By $|x|$, we mean the Euclidean norm of $x\in \mathbb{R}^{n+1}$. The unit sphere $\S^n$ will be assumed to have a smooth local orthonormal frame field  $\{e_1, \cdots, e_n\}$. By $\nabla$ and $\nabla^2$, we mean the gradient and Hessian operators with respect to $\{e_1, \cdots, e_n\}$. The surface area of $\S^n$ is denoted by $|\S^n|$.  

Let  $\Omega\in \mathcal{K}$ be a convex body containing the origin. Denote by $w_\Omega:\mathbb{S}^n\to\mathbb{R}$  the width function of $\Omega$, which can be formulated by, for $x\in \S^n$,   \[w_\Omega(x)=u_\Omega(x)+u_\Omega(-x).\] We shall need $w_\Omega^+$ and $w_\Omega^-$,  the maximal and  minimal widths of $\Omega$, respectively, which can be formulated by
\begin{eqnarray}\label{mmwide}
w_\Omega^+=\max_{x\in\mathbb{S}^n}\{u_\Omega(x)+u_\Omega(-x)\}\ \ \mathrm{and}\ \ 
w_\Omega^-=\min_{x\in\mathbb{S}^n}\{u_\Omega(x)+u_\Omega(-x)\}.
\end{eqnarray} 
The following result is needed and can be found in  e.g. \cite[Lemma 2.6]{CL19}.
\begin{lem}\label{pro}
Let $\Omega\in\mathcal{K}_0$ be a convex body containing the origin in its interior. Let $u_{\Omega}$ and $r_{\Omega}$ be the support and radial functions of $\Omega$, and $x_{max}\in \S^n$ and $\xi_{min}\in \S^n$ be such that $u_{\Omega}(x_{max})=\max_{x\in\S^n}u_{\Omega}(x)$ and $r_{\Omega}(\xi_{min})=\min_{\xi\in\S^n}r_{\Omega}(\xi)$. Then
\begin{eqnarray*}
\max_{x\in \S^n}u_{\Omega}(x)&=&\max_{\xi\in \S^n}r_{\Omega}(\xi)\ \ \mathrm{and}\ \ \min_{x\in\S^n}u_{\Omega}(x)=\min_{\xi\in \S^n}r_{\Omega}(\xi),\\
u_{\Omega}(x)&\ge& \langle x,  x_{max}\rangle u_{\Omega}(x_{max}) \ \ \ \mathrm{for\ all}\ x\in\S^n,\\
r_{\Omega}(\xi) \langle\xi, \xi_{min}\rangle &\le&  r_{\Omega}(\xi_{min})\ \ \ \mathrm{for\ all}\ \xi\in\S^n.
\end{eqnarray*}
\end{lem}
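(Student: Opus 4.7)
The plan is to exploit the duality between the support function $u_\Omega$ (which measures distances from the origin to supporting hyperplanes) and the radial function $r_\Omega$ (which measures distances from the origin to boundary points along rays). Since $\Omega\in\mathcal{K}_0$ is star-shaped from the origin, one has the ball sandwich $B(0,r^{-})\subseteq\Omega\subseteq\overline{B}(0,r^{+})$ with $r^{-}=\min r_\Omega$ and $r^{+}=\max r_\Omega$, which already yields $r^{-}\le u_\Omega(x)\le r^{+}$ for every $x\in\S^n$. To upgrade these bounds to the claimed identities I would exhibit explicit directions in which $u_\Omega$ attains $r^{\pm}$.

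For the max identity, pick a farthest point $p\in\partial\Omega$ from the origin and set $\xi_0=p/|p|$; Cauchy--Schwarz gives $\langle\xi_0,y\rangle\le|y|\le|p|$ for all $y\in\Omega$, hence $u_\Omega(\xi_0)\le|p|$, while $u_\Omega(\xi_0)\ge\langle\xi_0,p\rangle=|p|$ is immediate, so $u_\Omega(\xi_0)=r_\Omega(\xi_0)=r^{+}$. The min identity follows by the dual choice: the closest boundary point $q^\sharp$ and direction $q^\sharp/|q^\sharp|$ witness $u_\Omega(q^\sharp/|q^\sharp|)=r_\Omega(q^\sharp/|q^\sharp|)=r^{-}$.

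The second inequality rests on the sublemma that $q:=u_\Omega(x_{max})\,x_{max}$ itself lies in $\Omega$. To prove it I would pick any $p$ in the contact set $\{y:\langle x_{max},y\rangle=u_\Omega(x_{max})\}\cap\Omega$; then Cauchy--Schwarz gives $|p|\ge\langle x_{max},p\rangle=u_\Omega(x_{max})=r^{+}$, while $p\in\overline{B}(0,r^{+})$ gives $|p|\le r^{+}$, so the equality case of Cauchy--Schwarz forces $p=r^{+}x_{max}=q$. With $q\in\Omega$ in hand, the very definition of the support function gives $u_\Omega(x)\ge\langle x,q\rangle=\langle x,x_{max}\rangle u_\Omega(x_{max})$ for every $x\in\S^n$. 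The third inequality is the dual statement: the same equality-case analysis identifies $\xi_{min}$ as an outward unit normal to $\Omega$ at the closest boundary point $r_\Omega(\xi_{min})\xi_{min}$, so $\Omega$ sits in the half-space $\{y:\langle\xi_{min},y\rangle\le r_\Omega(\xi_{min})\}$; testing this inequality at the boundary point $r_\Omega(\xi)\xi\in\Omega$ produces $r_\Omega(\xi)\langle\xi,\xi_{min}\rangle\le r_\Omega(\xi_{min})$.

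The proof is elementary and presents no real analytic obstacle. The only step that requires a touch of care is the equality-case Cauchy--Schwarz argument in the third paragraph (and its dual used in the half-space argument), which is precisely what pins an extremizer of $u_\Omega$ to coincide with the corresponding extremal boundary point rather than merely bounding it; everything else reduces to definition chasing.
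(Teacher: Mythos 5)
Your proof is correct, and since the paper itself only cites this lemma from \cite[Lemma 2.6]{CL19} without reproducing a proof, your argument (ball sandwich $\overline{B}(0,r^-)\subseteq\Omega\subseteq\overline{B}(0,r^+)$, Cauchy--Schwarz equality cases to place $u_{\Omega}(x_{max})x_{max}$ in $\Omega$ and to identify $\xi_{min}$ as the outward normal at the nearest boundary point) is exactly the standard one used there. No gaps worth flagging; the only cosmetic point is that the justification of $u_\Omega(q^\sharp/|q^\sharp|)\le r^-$, asserted in your second paragraph, is really supplied only by the half-space argument of your third paragraph.
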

 
 Let $\mathcal{M}$ be a smooth, closed, uniformly convex hypersurface in $\mathbb{R}^{n+1}$, enclosing the origin.  The parametrization of $\mathcal{M}$ is given by the inverse Gauss map $X:\mathbb{S}^n\to \mathcal{M}\subseteq \mathbb{R}^{n+1}$. It follows from \eqref{def u} and \eqref{rev-rad-gauss} that 
 \begin{eqnarray} 
X(x)=r(\alpha^*(x))\alpha^*(x)\ \ \mathrm{and}\ \ u(x)&=&\langle x,X(x)\rangle, 
 \label{support} \end{eqnarray} where  $u$ is the support function of (the convex body circumscribed by) $\mathcal{M}$.
It is well known that the Gauss curvature of $\mathcal{M}$ is \begin{equation}
K=\frac{1}{\det(\nabla^2 u+uI)},\label{curvature-formula-1}
\end{equation} and  the principal curvature radii of $\mathcal{M}$  are the eigenvalues of the matrix $
b_{ij}=\nabla_{ij}u+u\delta_{ij}.$  Moreover, the following hold, see e.g. \cite{LSW16},
\begin{eqnarray}  \label{nablau} 
 r\xi = u x +\nabla u, \ \  r=\sqrt {u^2+|\nabla u|^2},  \ \ \mathrm{and} \ \ u&=&\frac{r^2}{\sqrt{r^2+|{\nabla}r|^2}}.
\end{eqnarray}

Let $u(\cdot, t)$ and $r(\cdot, t)$ be the support and radial functions of $\mathcal{M}_t$. Recall that (see e.g., \cite[Lemma 2.1]{CCL19} or \cite{HLYZ16, LSW16})
\begin{eqnarray}\label{r--u}
 \frac{\partial_t r(\xi,t)}{r }=\frac{\partial_t u (x,t)}{u}.
\end{eqnarray} 
By \eqref{support} and \eqref{curvature-formula-1}, the flow equation \eqref{SSF} for  $\mathcal{M}_t$ can be reformulated by its support function $u(x,t)$ as follows: 
\begin{equation}\label{uSSF}
\left\{
\begin{array}{rl}
\partial_tu(x,t)&=-f(x){\psi}(u,x)r^{n}G_z(r,\xi)^{-1}p^{-1}_\lambda(\xi) K+\eta(t)u,\\\\
u(\cdot,0)&=u_0.
\end{array}\right.
\end{equation} It follows from \eqref{r--u} that the flow equation \eqref{SSF} for  $\mathcal{M}_t$ can be reformulated by its  radial function $r(\xi,t)$ as follows: 
\begin{equation}\label{rSSF}
\left\{
\begin{array}{rl}
\partial_tr(\xi,t)&=-f(x) {\psi} (u, x) u^{-1} r^{n+1}G_z(r,\xi)^{-1}p^{-1}_\lambda(\xi)K+\eta(t)r ,\\\\
r(\cdot,0)&=r_0.
\end{array}\right.
\end{equation} Similarly, $\mathcal{M}_t^\varepsilon$ is evolved by \eqref{SF2} if its support function $u(x,t)=u_{\varepsilon}(x,t)$ satisfies
\begin{equation}\label{uSF2}
\left\{
\begin{array}{rl}
\partial_tu(x,t)&=-f(x)\widehat{\psi}_\varepsilon(u,x)r^{n}G_z(r,\xi)^{-1}p^{-1}_\lambda(\xi) K+\eta_\varepsilon(t)u,\\\\
u(\cdot,0)&=u_0;
\end{array}\right.
\end{equation} or equivalently if its radial function $r(\xi,t)=r_{\varepsilon} (\xi,t)$ satisfies
\begin{equation*}
\left\{
\begin{array}{rl}
\partial_tr(\xi,t)&=-f(x) \widehat{\psi}_\varepsilon(u, x) u^{-1} r^{n+1}G_z(r,\xi)^{-1}p^{-1}_\lambda(\xi)K+\eta_\varepsilon(t)r ,\\\\
r(\cdot,0)&=r_0.
\end{array}\right.
\end{equation*}

It is well known that $J(\xi)$, the determinant of the Jacobian of the radial Gauss image $x=\alpha_{\Omega}(\xi)$ for $\Omega\in \mathcal{K}_0$, satisfies  (see e.g., \cite{CCL19, HLYZ16, LSW16})
 \begin{eqnarray} \label{J(xi)}
 J(\xi)=\frac{r_{\Omega}^{n+1}(\xi) K(r_{\Omega}(\xi)\xi)}{u_{\Omega}(\alpha_{\Omega}(\xi))}.\label{Jac-1-1} \end{eqnarray} 
Letting $x=\alpha_{\Omega}(\xi)$ (hence $\xi=\alpha_{\Omega}^*(x)$) and  $r_{\Omega}=r_{\Omega}(\xi)=r_{\Omega}(\alpha_{\Omega}^*(x))$, one has
\begin{eqnarray} \int_{\S^n} r_{\Omega}^{-n} G_z(r_{\Omega},\xi)p_\lambda (\xi)uK^{-1}\,dx=\int_{\mathbb{S}^n} r_{\Omega}G_z(r_{\Omega},\xi) p_\lambda(\xi)d\xi. \label{Holder-required-1}\end{eqnarray} 

Note that $\widetilde{V}_{G,\lambda}(\Omega)$ for $G\in \mathcal{C}$ given in \eqref{VG def} can be extended to continuous function $G: [0, \infty)\times \S^n\rightarrow [0, \infty)$ and $\Omega\in \mathcal{K}$.  It is clear that both of \eqref{uSSF} and \eqref{uSF2} are parabolic Monge-Amp\'ere type, their solutions exist for a short time. Therefore the flow \eqref{SSF}, as well as \eqref{SF2}, have short time solutions. 
 Let $\mathcal{M}_t=X(\mathbb{S}^n, t)$ be the smooth, closed and uniformly convex hypersurface parametrized by $X(\cdot, t)$, where $X(\cdot, t)$ is a smooth solution to the flow \eqref{SSF} with $t\in[0,T)$ for some constant $T>0$.  Let $\Omega_t$ be the convex body enclosed by $\mathcal{M}_t$ such that  $\Omega_t\in \mathcal{K}_0$ for all $t\in[0,T)$.  We now show that  $\widetilde{V}_{G,\lambda}(\cdot)$ remains unchanged along the flow \eqref{SSF}.

\begin{lem}\label{Volume}  Let $G\in \mathcal{G}_I^0$ and $\Psi\in \mathcal{G}_I^0$. Let $X(\cdot,t)$ be a smooth solution to the flow \eqref{SSF} with $t\in[0,T)$, and $\M_t=X(\S^n,t)$ be a smooth, closed and uniformly convex hypersurface.  Suppose that the origin lies in the interior of the convex body $\Omega_t$ enclosed by $\M_t$ for all $t\in[0,T)$. Then, for any $t\in[0,T)$, one has 
\begin{eqnarray}\label{Volume bound}
\widetilde{V}_{G,\lambda}(\Omega_t)=\widetilde{V}_{G,\lambda}(\Omega_0).
\end{eqnarray}
\end{lem}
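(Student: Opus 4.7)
The plan is to show directly that $\frac{d}{dt}\widetilde{V}_{G,\lambda}(\Omega_t) = 0$ for all $t \in [0, T)$, which by continuity and the initial condition gives \eqref{Volume bound}. Since $\Omega_t \in \mathcal{K}_0$ and its boundary is smooth and uniformly convex, the radial function $r(\cdot,t)$ is smooth and strictly positive on $\S^n$, so by the definition \eqref{VG def} together with $d\lambda = p_\lambda\, d\xi$,
\[
\widetilde{V}_{G,\lambda}(\Omega_t) = \int_{\S^n} G\bigl(r(\xi,t), \xi\bigr)\, p_\lambda(\xi)\, d\xi.
\]
Differentiation under the integral sign is justified by the smoothness of $G$ and $r(\cdot, t)$ for $t \in [0, T)$, giving
\[
\frac{d}{dt} \widetilde{V}_{G,\lambda}(\Omega_t) = \int_{\S^n} G_z\bigl(r(\xi,t), \xi\bigr)\, \partial_t r(\xi,t)\, p_\lambda(\xi)\, d\xi.
\]

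Next I would substitute the radial-function form \eqref{rSSF} of the flow into this expression. The $G_z$ factor cancels with the $G_z^{-1}$ appearing in the first term of $\partial_t r$, and the $p_\lambda$ factor cancels with $p_\lambda^{-1}$ there, leaving two contributions:
\[
\frac{d}{dt} \widetilde{V}_{G,\lambda}(\Omega_t) = -\int_{\S^n} f(x)\, \psi(u,x)\, u^{-1} r^{n+1} K\, d\xi + \eta(t)\int_{\S^n} r\, G_z(r,\xi)\, p_\lambda(\xi)\, d\xi,
\]
where $x = \alpha_{\Omega_t}(\xi)$, $u = u(x,t)$, $r = r(\xi,t)$, and $K$ is evaluated at $X(x,t)$. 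The key step is to rewrite the first integral on the $x$-variable using the Jacobian identity \eqref{J(xi)}: since $dx = J(\xi)\, d\xi = r^{n+1} K u^{-1}\, d\xi$ under the radial Gauss map $\xi \mapsto x$, exactly the right combination appears in the integrand, so the first integral equals $\int_{\S^n} f(x)\, \psi(u(x,t), x)\, dx$.

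The conclusion is then immediate from the definition \eqref{eta def} of $\eta(t)$: the second term equals $\eta(t)\cdot \int_{\S^n} rG_z(r,\xi)p_\lambda(\xi)\,d\xi = \int_{\S^n} f\,\psi(u,x)\,dx$, which exactly cancels the first term. Hence $\frac{d}{dt}\widetilde{V}_{G,\lambda}(\Omega_t) \equiv 0$. The main (minor) subtlety is to double-check that $x = \alpha_{\Omega_t}(\xi)$ is a bijection with the claimed Jacobian so that the change of variables is valid; this is guaranteed by the smoothness and uniform convexity hypothesis on $\M_t$ together with $\Omega_t \in \mathcal{K}_0$, and it is precisely what makes the normalising factor $\eta(t)$ the ``right'' one. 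No additional structural assumption beyond $G, \Psi \in \mathcal{G}_I^0$ and the definition of $\eta$ is needed.
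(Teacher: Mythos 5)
Your proposal is correct and follows essentially the same argument as the paper: differentiate $\widetilde{V}_{G,\lambda}(\Omega_t)$ under the integral, substitute the radial form \eqref{rSSF} of the flow, convert the first term to an integral in $x$ via the Jacobian identity \eqref{J(xi)}, and observe that the normalization \eqref{eta def} makes the two terms cancel. The only cosmetic difference is that the paper states the change-of-variables identity up front rather than at the end.
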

\begin{proof} It follows from \eqref{Jac-1-1} that, by letting $x=\alpha_{\Omega}(\xi)$,  
$$\int_{\S^n}f{\psi}(u, x)\,dx=\int_{\S^n}f\frac{{\psi}(u, x)}{u}r^{n+1}{K} \,d\xi.$$
This, together with \eqref{eta def}, \eqref{rSSF} and \eqref{J(xi)},  yield  that 
\begin{eqnarray*}
\frac{d}{dt}\int_{\S^n}G(r,\xi)p_\lambda(\xi)\,d\xi
&=&\int_{\S^n}G_z(r,\xi)p_\lambda(\xi)r_t\,d\xi
\\&=&\int_{\S^n}G_z(r,\xi)p_\lambda(\xi)\left(-f\frac{{\psi}(u)}{u}r^{n+1}G^{-1}_z(r,\xi)p^{-1}_\lambda K+r\eta(t)\right)\,d\xi
\\&=&-\int_{\S^n}f\frac{{\psi}(u,x)}{u}r^{n+1}{K}d\xi+\eta(t) \int_{\S^n}rG_z(r,\xi) p_\lambda(\xi)d\xi\\&=&-\int_{\S^n}f\frac{{\psi}(u,x)}{u}r^{n+1}{K}d\xi+\int_{\S^n}f{\psi}(u,x)\,dx=0.
\end{eqnarray*}
In conclusion, $\widetilde{V}_{G,\lambda}(\cdot)$ remains unchanged along the flow \eqref{SSF}, and in particular, \eqref{Volume bound} holds for any $t\in[0,T)$.  
\end{proof}

The lemma below shows that the functional \begin{equation}\label{function}
\mathcal{J}(u)=\int_{\S^n}f{\Psi}(u,x)dx,
\end{equation} is monotone along the flow \eqref{SSF}.
\begin{lem}\label{mono} Let $G\in \mathcal{G}_I^0$ and $\Psi\in \mathcal{G}_I^0$. Let $X(\cdot,t)$, $\M_t$, and $\Omega_t$ be as in Lemma \ref{Volume}. Then the functional $\mathcal{J}$ defined in \eqref{function} is non-increasing along the flow \eqref{SSF}. That is, $\frac{\,d\mathcal{J}(u(\cdot, t))}{\,dt}\leq 0$, with equality if and only if $\mathcal{M}_t$ satisfies the elliptic equation \eqref{elliptic}.
\end{lem}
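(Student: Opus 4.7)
The plan is to differentiate $\mathcal{J}$ along the flow, simplify the resulting expression using the relation $\psi = u\Psi_{z}$, and then apply the Cauchy--Schwarz inequality together with the identity \eqref{Holder-required-1} to obtain the sign.

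First I would compute, using the chain rule and the evolution equation \eqref{uSSF},
\begin{equation*}
\frac{d}{dt}\mathcal{J}(u(\cdot,t)) = \int_{\S^n} f(x)\,\Psi_z(u,x)\,\partial_t u\,dx
= \int_{\S^n} \frac{f(x)\,\psi(u,x)}{u}\,\partial_t u\,dx,
\end{equation*}
where I have used $\psi(u,x)=u\Psi_z(u,x)$. Substituting $\partial_t u = -f\psi\,r^{n} G_z(r,\xi)^{-1} p_\lambda(\xi)^{-1} K + \eta(t)u$ and separating into two terms yields
\begin{equation*}
\frac{d}{dt}\mathcal{J} \;=\; -\int_{\S^n}\frac{f^{2}\psi^{2}\,r^{n}K}{u\,G_z(r,\xi)\,p_\lambda(\xi)}\,dx \;+\; \eta(t)\int_{\S^n} f\,\psi(u,x)\,dx.
\end{equation*}

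Next, I would rewrite the denominator of $\eta(t)$ via \eqref{Holder-required-1}, which says
\[
\int_{\S^n} r G_z(r,\xi)\,p_\lambda(\xi)\,d\xi \;=\; \int_{\S^n} r^{-n}G_z(r,\xi)\,p_\lambda(\xi)\,u\,K^{-1}\,dx.
\]
Now set
\[
A(x) \;=\; \sqrt{\frac{f^{2}\psi^{2}\,r^{n}K}{u\,G_z\,p_\lambda}}, \qquad B(x) \;=\; \sqrt{\frac{r^{-n}G_z\,p_\lambda\,u}{K}},
\]
so that $A(x)B(x) = f(x)\psi(u,x)$. The Cauchy--Schwarz inequality gives
\[
\Big(\int_{\S^n} f\psi\,dx\Big)^{2} = \Big(\int_{\S^n} AB\,dx\Big)^{2} \le \int_{\S^n} A^{2}\,dx \cdot \int_{\S^n} B^{2}\,dx,
\]
which, combined with the definition of $\eta(t)$ and the identity above, rearranges exactly to
\[
\eta(t)\int_{\S^n} f\psi\,dx \;\le\; \int_{\S^n} \frac{f^{2}\psi^{2}\,r^{n}K}{u\,G_z\,p_\lambda}\,dx.
\]
Hence $\tfrac{d}{dt}\mathcal{J}(u(\cdot,t))\le 0$.

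For the equality case, Cauchy--Schwarz is sharp iff $A$ is proportional to $B$, i.e.\ there is a constant $c$ (depending only on $t$) with
\[
\frac{f\,\psi\,r^{n}K}{u\,G_z\,p_\lambda} \;=\; c, \qquad\text{equivalently}\qquad c\,u\,G_z(r,\xi)\,p_\lambda(\xi)\,r^{-n}\det(\nabla^{2}u+uI) \;=\; f(x)\,\psi(u,x),
\]
using $K=1/\det(\nabla^{2}u+uI)$ and $r=\sqrt{u^{2}+|\nabla u|^{2}}$. This is precisely the elliptic Monge--Amp\`ere equation \eqref{elliptic} (with $\gamma=1/c$), completing the characterization of equality. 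I do not anticipate any serious obstacle; the only point requiring care is the clean packaging of the two factors into $A$ and $B$ so that Cauchy--Schwarz produces exactly the $\eta(t)$-term appearing from the flow, and this is dictated by the identity \eqref{Holder-required-1}.
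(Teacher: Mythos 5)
Your proposal is correct and follows essentially the same route as the paper: differentiate $\mathcal{J}$ using $\Psi_z(u,x)=\psi(u,x)/u$, substitute the flow equation, and control the $\eta(t)$-term by Cauchy--Schwarz (the paper's H\"older inequality with exponent $2$) combined with the change-of-variables identity \eqref{Holder-required-1}, with equality characterized by proportionality of the two factors, which is exactly \eqref{relsult1} and hence \eqref{elliptic}. The only difference is that the paper additionally verifies that the proportionality constant $c(t)$ equals $\eta(t)$, which is not needed for the statement as given.
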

\begin{proof} Let $u(\cdot, t)$  be the support function of $\mathcal{M}_t$. 
It follows from \eqref{J(xi)}, \eqref{Holder-required-1}, and H\"{o}lder inequality that
 \begin{eqnarray*}  \int_{\mathbb{S}^n}f{\psi}(u,x)\!\,dx \!\! &=& \!\!\! \!\int_{\mathbb{S}^n} \!\! \Big(f^2{\psi}^2(u,x)r^n G_z^{-1}(r,\xi)p_\lambda^{-1}(\xi)u^{-1}K\Big)^{\frac{1}{2}} \Big(r^n G_z^{-1}(r,\xi)p_\lambda^{-1}(\xi)u^{-1}K\Big)^{-\frac{1}{2}}\!\!\,dx \\ \! \!&\le&\! \!\! \left(\int_{\mathbb{S}^n}f^2{\psi}^2(u,x)r^n G_z^{-1}(r,\xi)p_\lambda^{-1}(\xi)u^{-1}K\,dx\right)^{\frac{1}{2}} \left(\int_{\mathbb{S}^n} rG_z(r,\xi) p_\lambda(\xi)d\xi\right)^{\frac{1}{2}}.\end{eqnarray*}   Together with   \eqref{uSSF}, one gets  
\begin{eqnarray*}
\frac{\,d\mathcal{J}(u(\cdot, t))}{\,dt}=-\int_{\mathbb{S}^n}f^2{\psi}^2(u,x)r^n G_z^{-1}(r,\xi)p_\lambda^{-1}(\xi)u^{-1}Kdx+\frac{\left(\int_{\mathbb{S}^n}f {\psi} (u,x)dx\right)^2}{\int_{\mathbb{S}^n} rG_z(r,\xi)p_\lambda(\xi)d\xi}\leq 0. 
\end{eqnarray*}  Clearly, equality holds here if and only if equality holds for the  H\"{o}lder inequality, namely, there exists a constant $c(t)>0$ such that 
\begin{eqnarray}\label{relsult1}
f\frac{{\psi}(u,x)}{u}r^n G_z^{-1}(r,\xi)p_\lambda^{-1}(\xi)\big({\det}(\nabla^2 u+uI)\big)^{-1}=c(t).
\end{eqnarray} Moreover, it can be proved by \eqref{Holder-required-1} and \eqref{relsult1} that $c(t)=\eta(t)$ as follows: 
\[\eta(t)=\frac{\int_{\mathbb{S}^n}f{\psi}(u,x) dx}{\int_{\mathbb{S}^n} rG_z(r,\xi) p_\lambda(\xi)d\xi}=c(t) \frac{\int_{\mathbb{S}^n} r^{-n} G_z(r,\xi)p_\lambda(\xi) uK^{-1} dx}{\int_{\mathbb{S}^n}rG_z(r,\xi) p_\lambda(\xi)d\xi}=c(t).\] This concludes the proof. 
\end{proof}

Regarding the flow \eqref{SF2}, results similar to Lemmas \ref{Volume} and \ref{mono} can be obtained. For $\Psi\in \mathcal{G}_I^0$, let \begin{eqnarray*}
 \widehat{\Psi}_\varepsilon(s,x)=\int_0^s\frac{\widehat{\psi}_\varepsilon(t,x)}{t}\,dt\ \ \ \mathrm{for}\ \  (s,x)\in [0,\infty)\times{\S}^{n}.  \end{eqnarray*} It follows from \eqref{hatpsi eps} that  $\widehat{\Psi}_\varepsilon: [0,\infty)\times \S^n\rightarrow [0, \infty)$ is well defined and  continuous, such that, $\widehat{\Psi}_\varepsilon(0, x)=0$ for all $x\in\S^n.$ Moreover,  $\widehat{\Psi}_\varepsilon(s,x)$ is strictly increasing with respect to $s$ on $[0,\infty)$. Define the functional $\mathcal{J}_\varepsilon$ by
\begin{equation}\label{functional}
\mathcal{J}_\varepsilon(u)=\int_{\S^n}f\widehat{\Psi}_\varepsilon(u,x)dx.
\end{equation}

Following the same lines as the proofs for Lemmas  \ref{Volume} and \ref{mono}, one can prove that, along the flow \eqref{SF2}, $\mathcal{J}_\varepsilon$  is non-increasing  and $\widetilde{V}_{G,\lambda}(\cdot)$ remains unchanged.  In fact, it is clear that, for $\varepsilon>0$ small enough, the solutions to the flow \eqref{SF2} exist for a short time.  Let $\mathcal{M}_t^\varepsilon=X_{\varepsilon} (\mathbb{S}^n, t)$ be the smooth, closed and uniformly convex hypersurface parametrized by $X_{\varepsilon}(\cdot, t)$, where $X_{\varepsilon}(\cdot, t)$ is a smooth solution to the flow \eqref{SF2} with $t\in[0,T)$ for some constant $T>0$.  Let $\Omega^{\varepsilon}_t$ be the convex body enclosed by $\mathcal{M}^{\varepsilon}_t$ such that  $\Omega_t^{\varepsilon}\in \mathcal{K}_0$ for all $t\in[0,T)$. The support function of $\Omega_t^{\varepsilon}$ is $u_{\varepsilon}(x,t)$.  
\begin{lem}\label{monotone1}Let $G\in \mathcal{G}_I^0$ and $\Psi\in \mathcal{G}_I^0$. Then, the functional $\mathcal{J}_\varepsilon$ is non-increasing along the flow \eqref{SF2}. That is, $\frac{\,d\mathcal{J_{\varepsilon}}(u_{\varepsilon}(\cdot, t))}{\,dt}\leq 0$, with equality if and only if $\mathcal{M}^{\varepsilon}_t$ satisfies the elliptic equation   \eqref{elliptic eq}. Moreover,  
\begin{eqnarray*}
\widetilde{V}_{G,\lambda}(\Omega^{\varepsilon}_t)=\widetilde{V}_{G,\lambda}(\Omega_0),  \ \ \mathrm{for \ all} \,t\in[0,T).
\end{eqnarray*}
\end{lem}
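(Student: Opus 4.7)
The plan is to adapt the computations in Lemmas \ref{Volume} and \ref{mono} \emph{verbatim} with $\psi$ replaced by $\widehat{\psi}_\varepsilon$, $\mathcal{J}$ by $\mathcal{J}_\varepsilon$, and $\eta(t)$ by $\eta_\varepsilon(t)$. The key observation is that those two proofs are purely algebraic and depend only on (a) the support/radial form of the flow, (b) the Jacobian identity \eqref{J(xi)}, and (c) a single application of the H\"older inequality; none of these invoke any property of $\psi$ beyond those enjoyed by $\widehat{\psi}_\varepsilon$. Consequently the whole argument transcribes once we confirm that $\widehat{\Psi}_\varepsilon$ is well defined.

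Before starting the main computation I would check the integrability of $\widehat{\psi}_\varepsilon(t,x)/t$ at $t=0$. On $[0,\varepsilon]$, the definition \eqref{hatpsi eps} gives $\widehat{\psi}_\varepsilon(t,x)/t = G_z(t,\alpha^*(x))\,t^{\varepsilon}$, and the inequality $G_z(t,\xi)\,t^{1+\varepsilon}\le t^{\varepsilon}$ built into the choice of $\varepsilon$ furnishes an integrable majorant $t^{\varepsilon-1}$ near zero. Hence $\widehat{\Psi}_\varepsilon(s,x)=\int_0^s \widehat{\psi}_\varepsilon(t,x)/t\,dt$ is finite and continuous on $[0,\infty)\times\S^n$, so $\mathcal{J}_\varepsilon$ is well defined.

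For the volume preservation I would differentiate $\widetilde V_{G,\lambda}(\Omega^\varepsilon_t)=\int_{\S^n}G(r,\xi)p_\lambda(\xi)\,d\xi$ in $t$, substitute the radial form of flow \eqref{SF2}, and convert a $\xi$-integral into an $x$-integral via \eqref{J(xi)} to obtain
$$\int_{\S^n} f\,\widehat{\psi}_\varepsilon(u,x)\,dx=\int_{\S^n} f\,\frac{\widehat{\psi}_\varepsilon(u,x)}{u}\,r^{n+1}K\,d\xi.$$
The definition \eqref{eta epi def} of $\eta_\varepsilon(t)$ is tailored so that the two resulting contributions cancel exactly, giving $\widetilde V_{G,\lambda}(\Omega^\varepsilon_t)=\widetilde V_{G,\lambda}(\Omega_0)$ for all $t\in[0,T)$.

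For the monotonicity of $\mathcal{J}_\varepsilon$, the identity $\partial_s\widehat{\Psi}_\varepsilon(s,x)=\widehat{\psi}_\varepsilon(s,x)/s$ together with \eqref{uSF2} yields
$$\frac{d\mathcal{J}_\varepsilon}{dt}=-\int_{\S^n}\frac{f^2\,\widehat{\psi}_\varepsilon^{\,2}\,r^n\,K}{u\,G_z(r,\xi)\,p_\lambda(\xi)}\,dx+\eta_\varepsilon(t)\int_{\S^n} f\,\widehat{\psi}_\varepsilon(u,x)\,dx.$$
Using \eqref{eta epi def} to rewrite the second term and then applying the Cauchy--Schwarz inequality exactly as in the proof of Lemma \ref{mono} (together with \eqref{Holder-required-1}) shows the right-hand side is non-positive, with equality iff $f\,\widehat{\psi}_\varepsilon(u,x)\,u^{-1}\,r^n\,G_z(r,\xi)^{-1}\,p_\lambda(\xi)^{-1}(\det(\nabla^2 u+uI))^{-1}$ is a function of $t$ alone, which after identifying the constant with $\eta_\varepsilon(t)$ via \eqref{Holder-required-1} becomes precisely \eqref{elliptic eq}. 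I do not anticipate any real obstacle: the only delicate point is the integrability check for $\widehat{\Psi}_\varepsilon$, handled above, and the remainder is a direct transcription.
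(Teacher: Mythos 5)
Your proposal is correct and follows essentially the same route as the paper, which itself disposes of this lemma by noting that the proofs of Lemmas \ref{Volume} and \ref{mono} carry over verbatim with $\psi$, $\eta(t)$, $\mathcal{J}$ replaced by $\widehat{\psi}_\varepsilon$, $\eta_\varepsilon(t)$, $\mathcal{J}_\varepsilon$. Your preliminary check that $\widehat{\Psi}_\varepsilon$ is well defined (integrability of $\widehat{\psi}_\varepsilon(t,x)/t$ near $t=0$ via the majorant $t^{\varepsilon-1}$) corresponds to the paper's remark preceding the lemma, and the Jacobian/H\"older computations match the originals.
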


\section{$C^0$ estimates} \label{section-3}
In this section, the uniform lower and upper bounds of the solutions to \eqref{SF2} are provided. Let ${\Omega_0}\in \mathcal{K}_0$ be the convex body enclosed by the initial hypersurface $\mathcal{M}_0$ of the flow \eqref{SF2}. As $\varepsilon\in (0, \delta)$ will be arbitrarily small, a constant $c_0>0$ can be found, such that, for all $\xi\in \S^n$, 
\begin{eqnarray}\label{ep}
r_{\Omega_0}(\xi)\ge c_0\ge10\varepsilon.
\end{eqnarray}  For simplicity, we shall omit the subscript/superscript $\varepsilon$ in  $X_\varepsilon(\cdot,t)$  (the solution to \eqref{SF2}), $\mathcal{M}_t^\varepsilon=X_\varepsilon(\S^n,t)$, $\Omega_t^{\varepsilon}$ (the convex body enclosed by $\mathcal{M}_t^\varepsilon$),  $u_\varepsilon(\cdot,t)$ and $r_{\varepsilon}(\cdot, t)$  (the support and radial functions of $\Omega_t^{\varepsilon}$) etc, if  no confusion occurs.

 \begin{lem}\label{upperbound}
Let $f$ and $p_\lambda$ be two smooth positive functions on $\mathbb{S}^n$, and $u_0$ be a positive and uniformly convex function. Let $u(\cdot,t)$ be a positive, smooth and uniformly convex solution to \eqref{uSF2}.
Let  $\Psi\in \mathcal{G}_I^0$ be a smooth function satisfying \eqref{Psi-comp-condition}.  
 Suppose that $G\in \mathcal{G}_I^0$ and $\psi=z\Psi_z$ satisfy \eqref{good-condition-2}. 
Then there is a constant $C>0$ depending only on $f$, $p_\lambda$, $G$, $\psi$ and $\Omega_0$, but independent of $\varepsilon$, such that
\begin{eqnarray}\label{upper bound2}
\max_{x\in\mathbb{S}^n}u(x, t)  \le  C \ \ \mathrm{and}\ \ \max_{x\in\mathbb{S}^n}|\nabla u (x, t)| \le C \ \ \ \mathrm{for\ all}\  t\in[0,T).
\end{eqnarray} 
\end{lem}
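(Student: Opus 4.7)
The plan is to run a standard energy-plus-cap argument: combine the monotonicity of the functional $\mathcal{J}_\varepsilon$ from Lemma \ref{monotone1} with the convex-body comparison in Lemma \ref{pro} to transfer an integral bound on $\widehat{\Psi}_\varepsilon(u(\cdot,t),\cdot)$ into a pointwise upper bound on $u(\cdot,t)$, and then deduce the gradient estimate from the identity $r^2 = u^2 + |\nabla u|^2$ of \eqref{nablau}. The key structural identity driving everything is
\begin{equation*}
\widehat{\Psi}_\varepsilon(s, x) = \widehat{\Psi}_\varepsilon(2\varepsilon, x) + \Psi(s,x) - \Psi(2\varepsilon, x), \qquad s \geq 2\varepsilon,
\end{equation*}
immediate from \eqref{hatpsi eps} because $\widehat{\psi}_\varepsilon(\cdot, x) = \psi(\cdot, x)$ on $[2\varepsilon, \infty)$. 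In any comparison where both arguments lie in $[2\varepsilon, \infty)$, the $\varepsilon$-sensitive piece $\widehat{\Psi}_\varepsilon(2\varepsilon, x)$ cancels and only $\Psi$-differences survive.

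Set $M(t) := \max_x u(x, t)$ and let $x_{\max}$ realize it. If $M(t) \leq 4\varepsilon$ the conclusion is trivial since $\varepsilon < \delta \leq 1$, so assume $M(t) \geq 4\varepsilon$. By Lemma \ref{pro}, on the spherical cap
\begin{equation*}
\mathcal{U} := \bigl\{x \in \S^n : \langle x, x_{\max}\rangle \geq \tfrac{1}{2}\bigr\},
\end{equation*}
one has $u(x, t) \geq M(t)/2 \geq 2\varepsilon$, and the initial datum satisfies $u_0 \geq c_0 \geq 10\varepsilon$ globally by \eqref{ep}. Inserting the structural identity into $\mathcal{J}_\varepsilon(u(\cdot, t))$ restricted to $\mathcal{U}$ and into $\mathcal{J}_\varepsilon(u_0)$ over all of $\S^n$, using $\widehat{\Psi}_\varepsilon(u,\cdot) \geq 0$ on $\S^n \setminus \mathcal{U}$ to drop the complement from the left, arranging the $\widehat{\Psi}_\varepsilon(2\varepsilon, \cdot)$ contributions to cancel over $\mathcal{U}$, and invoking $\mathcal{J}_\varepsilon(u(\cdot,t)) \leq \mathcal{J}_\varepsilon(u_0)$ from Lemma \ref{monotone1}, I arrive at an inequality of the form
\begin{equation*}
\int_{\mathcal{U}} f(x)\,\Psi\!\bigl(M(t)/2, x\bigr)\,dx \leq \int_{\S^n} f(x)\,\Psi\!\bigl(u_0(x), x\bigr)\,dx + R_\varepsilon,
\end{equation*}
where $R_\varepsilon$ collects residual contributions coming from $\S^n \setminus \mathcal{U}$. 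The growth condition \eqref{Psi-comp-condition}, combined with continuity of $\Psi$, monotonicity in $s$ (built into $\Psi \in \mathcal{G}_I^0$), and compactness of $\S^n$, upgrades via a finite-cover argument to $\inf_{x \in \S^n} \Psi(s, x) \to \infty$ as $s \to \infty$; this together with the last displayed inequality forces $M(t) \leq C$ for a constant $C$ depending only on $f$, $p_\lambda$, $G$, $\psi$, and $\Omega_0$. The gradient estimate then follows at once: Lemma \ref{pro} gives $\max_\xi r(\xi, t) = M(t) \leq C$, and \eqref{nablau} yields $|\nabla u(x, t)|^2 = r^2 - u^2 \leq C^2$.

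The principal obstacle is ensuring that the residual $R_\varepsilon$ is bounded independently of $\varepsilon$, because the $\widehat{\Psi}_\varepsilon(2\varepsilon, \cdot)$ contributions arising from $\int_0^{2\varepsilon} \widehat{\psi}_\varepsilon(t,\cdot)/t\,dt$ can in principle diverge as $\varepsilon \to 0$ for sufficiently singular $G \in \mathcal{G}_I^0$. The cancellation of these parts on $\mathcal{U}$ rests crucially on two structural features: the lower bound $u_0 \geq 10\varepsilon$ from \eqref{ep}, which places $u_0$ into the good regime $\{u \geq 2\varepsilon\}$ globally, and the automatic containment $\mathcal{U} \subseteq \{u(\cdot, t) \geq 2\varepsilon\}$ whenever $M(t) \geq 4\varepsilon$. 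The genuinely delicate step is absorbing the analogous contribution over $\S^n \setminus \mathcal{U}$, where $u(\cdot, t)$ could in principle drop below $2\varepsilon$, into $R_\varepsilon$ without invoking the as-yet-unproved positive lower bound of Lemma \ref{lowerbound}; this requires carefully exploiting $\widehat{\Psi}_\varepsilon(u,\cdot) \leq \widehat{\Psi}_\varepsilon(2\varepsilon,\cdot)$ on $\{u < 2\varepsilon\}$ to keep $R_\varepsilon$ pointwise non-positive there, so it can simply be dropped.
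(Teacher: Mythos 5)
Your overall strategy is the paper's: monotonicity of $\mathcal{J}_\varepsilon$ from Lemma \ref{monotone1}, restriction to the half-cap around the maximizer (your $\mathcal{U}$, the paper's $\Sigma_{1/2}$) via Lemma \ref{pro}, coercivity \eqref{Psi-comp-condition} to bound $\max u$, and then $|\nabla u|\le r=\max u$. The divergence is in how the $\varepsilon$-dependence of $\widehat{\Psi}_\varepsilon$ is handled, and that is where your proposal has a genuine gap. After your cancellation on $\mathcal{U}$, the term you must still control is
\[
R_\varepsilon=\int_{\S^n\setminus\mathcal{U}}f(x)\big[\widehat{\Psi}_\varepsilon(u_0(x),x)-\widehat{\Psi}_\varepsilon(u(x,t),x)\big]\,dx,
\]
and your proposed mechanism --- using $\widehat{\Psi}_\varepsilon(u,\cdot)\le\widehat{\Psi}_\varepsilon(2\varepsilon,\cdot)$ on $\{u<2\varepsilon\}$ to make the residual ``pointwise non-positive so it can be dropped'' --- has the sign backwards. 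Since $\widehat{\Psi}_\varepsilon(\cdot,x)$ is increasing and $u_0\ge10\varepsilon$, the integrand of $R_\varepsilon$ is \emph{non-negative} exactly on the problematic set $\{u(\cdot,t)<2\varepsilon\}$, where it equals $\widehat{\Psi}_\varepsilon(2\varepsilon,x)+\Psi(u_0,x)-\Psi(2\varepsilon,x)-\widehat{\Psi}_\varepsilon(u,x)$ and can be as large as $\widehat{\Psi}_\varepsilon(2\varepsilon,x)+\Psi(u_0,x)$. Your cancellation therefore does not dispose of the quantity you flagged as the obstacle; it only relocates it from $\mathcal{U}$ to $\S^n\setminus\mathcal{U}$.

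The repair is that the obstacle is illusory: $\widehat{\Psi}_\varepsilon(2\varepsilon,x)$ is bounded uniformly in $\varepsilon$ and $x$. On $[0,\varepsilon]$ one has $\widehat{\psi}_\varepsilon(t,x)/t=G_z(t,\alpha^*(x))\,t^{\varepsilon}\le G_z(t,\alpha^*(x))$ because $t^{\varepsilon}\le1$, and $\int_0^{\varepsilon}G_z(t,\xi)\,dt=G(\varepsilon,\xi)-G(0,\xi)\le\max_{\xi\in\S^n}G(1,\xi)$, using that $G\in\mathcal{G}_I^0$ is continuous on $[0,\infty)\times\S^n$ with $G(0,\cdot)=0$ (so $G_z$ is integrable at $0$); on $[\varepsilon,2\varepsilon]$ one has $\widehat{\psi}_\varepsilon\le C_0$, contributing at most $C_0\log 2$. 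Hence $\mathcal{J}_\varepsilon(u_0)\le C_1$ with $C_1$ independent of $\varepsilon$ --- which is precisely the unexplained first assertion of the paper's proof --- and the whole bookkeeping collapses to the paper's one-sided estimate $\widehat{\Psi}_\varepsilon(s,x)\ge\Psi(s,x)-\Psi(2\varepsilon,x)\ge\Psi(s,x)-\Psi(5,x)$ for $s\ge2\varepsilon$, applied on $\Sigma_{1/2}$ only. With that substitution your argument closes; your finite-cover upgrade of \eqref{Psi-comp-condition} to $\inf_{x}\Psi(s,x)\to\infty$ is correct (and equivalent to the paper's use of the inverse function $\overline{\Psi}$), as is the gradient step.
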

\begin{proof}   Combining with \eqref{functional} and Lemma \ref{monotone1},  there exists a constant $C_1>0$, independent of $\varepsilon$, such that, 
 \begin{eqnarray}\label{intbound1} C_1\geq 
\mathcal{J}_\varepsilon \big(u_0\big)\ge\mathcal{J}_\varepsilon\big(u(\cdot, t)\big)=\int_{{\S}^n}f(x)\widehat{\Psi}_\varepsilon(u,x)dx.
\end{eqnarray}
Let $x_t\in\S^n$ be such that  $u(x_t, t) =\max_{x\in \S^n} u(x, t)$. Without loss of generality, assume that $u(x_t, t)>10$ holds for some $t\in [0, T)$ (otherwise, there is nothing to prove). Let $$\Sigma_{\beta}=\{x\in \S^n: \langle x, x_t\rangle \ge \beta\}.$$ As $\varepsilon\in (0, \delta)$ with $\delta<1$, it can be checked from  Lemma \ref{pro} that, \begin{equation}\label{comp-max-supp}
u(x,t)\ge u(x_t,t)\langle x_t,x\rangle\ge\frac{1}{2}u(x_t,t)>5>2\varepsilon \ \ \ \mathrm{for\ all}\ x\in \Sigma_{1/2}.\end{equation}
It is easily checked that $\widehat{\Psi}_\varepsilon(s,x)\ge \Psi(s,x)-\Psi(2\varepsilon, x)\ge\Psi(s,x)-\Psi(5, x)$ for $(s,x)\in[2\varepsilon,\infty)\times\mathbb{S}^n$. Therefore, 
\begin{eqnarray*}
\int_{\S^n}f(x)\widehat{\Psi}_\varepsilon(u(x, t),x)\,dx&\ge&\int_{x\in\Sigma_{1/2}}f(x)\widehat{\Psi}_\varepsilon(u(x, t),x)\,dx
\\&\ge&\int_{x\in\Sigma_{1/2}} f(x) \widehat{\Psi}_\varepsilon \Big(\frac{u(x_t,t)}{2}, x\Big)\,dx
\\&\ge&  \min_{x\in \S^n}  \Big[\Psi\Big(\frac{u(x_t,t)}{2}, x\Big)-\Psi(5, x)\Big] \cdot \min_{x\in \S^n} f(x)\cdot \int_{\Sigma_{1/2}}\,dx.
\end{eqnarray*} Note that, the set $\Sigma_{1/2}$ may depend on $t$, but $ \int_{\Sigma_{1/2}}\,dx$ is a constant independent to $t$.  By \eqref{intbound1},  there exists $x_0\in \S^n$ (possibly depending on $t\in [0, T)$) such that
 \begin{eqnarray*}
\Psi\Big(\frac{u(x_t,t)}{2}, x_0\Big)-\Psi(5, x_0) =\min_{x\in \S^n}  \Big[\Psi\Big(\frac{u(x_t,t)}{2}, x\Big)-\Psi(5, x)\Big] \leq   C_1 \bigg( \min_{x\in \S^n} f(x)\cdot \int_{\Sigma_{1/2}}\,dx\bigg)^{-1}.
\end{eqnarray*} 
As $\Psi(t, x)$ is strictly increasing on $t\in (0, \infty)$ and $\lim_{t\rightarrow +\infty}\Psi(t, x)=+\infty$ for all $x\in \S^n$, $\Psi(t, \cdot)$ has its inverse function $\overline{\Psi}(t, \cdot)$  on $t\in [0, \infty)$. Moreover, $\overline{\Psi}: [0, \infty)\times \S^n$ is continuous such that $\lim_{t\rightarrow +\infty}\overline{\Psi}(t, x)=+\infty$ for all $x\in \S^n$.  Hence, 
 \[\max_{x\in \S^n} u(x,t) =u(x_t,t) \leq 2 \max_{x_0\in \S^n}\overline{\Psi}(\overline{C}_0, x_0)=: C,\] 
 where $\overline{C}_0=C_1 \bigg( \displaystyle\min_{x\in \S^n} f(x)\cdot \int_{\Sigma_{1/2}}\,dx\bigg)^{-1} +\max_{x\in\S^n}\Psi(5,x)$ and hence $C>0$ is a finite constant independent of $\varepsilon$ and $t$.  It follows from \eqref{nablau} and Lemma \ref{pro} that 
\[  \max_{x\in \S^n}|\nabla u(x, t)| \leq   \max_{x\in \S^n} r(x,t) =  \max_{x\in\S^n} u(x,t)\le C.\]
This completes the proof. 
\end{proof}

The following lemma provides uniform bound for $\eta_\varepsilon(t)$ defined in \eqref{eta epi def}.

\begin{lem}\label{etaepsbound}
Let $f$, $p_\lambda$, $G$ and $\Psi$ satisfy conditions stated in Lemma \ref{upperbound}. Let $u(\cdot,t)$ be a positive, smooth and uniformly convex solution to \eqref{uSF2}. 
Then  \begin{eqnarray}\label{etaepsilon bonud}
\frac{1}{C_2}\le\eta_\varepsilon(t)\le C_2\ \ \ \mathrm{for\ all}\  t\in[0,T),
\end{eqnarray} where $C_2>0$ is a constant  depending only on $f$, $p_\lambda$, $G$, $\Psi$ and $\Omega_0$, but independent of $\varepsilon$. \end{lem}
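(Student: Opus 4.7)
The strategy is to bound the numerator $\mathcal{N}_\varepsilon(t):=\int_{\S^n}f\widehat{\psi}_\varepsilon(u,x)\,dx$ and the denominator $\mathcal{D}_\varepsilon(t):=\int_{\S^n}rG_z(r,\xi)p_\lambda(\xi)\,d\xi$ of $\eta_\varepsilon(t)$ separately from above and below, using only the $C^0$ bound of Lemma~\ref{upperbound} and the conservation law $\widetilde V_{G,\lambda}(\Omega_t^\varepsilon)=\widetilde V_{G,\lambda}(\Omega_0)=:V_0>0$ from Lemma~\ref{monotone1}. The upper bounds are immediate. Since $u,r\le C$, the piecewise construction \eqref{hatpsi eps} gives $\widehat{\psi}_\varepsilon(u,x)\le\max\{C_0,\max_{[0,C]\times\S^n}\psi\}$: in the outer regime $\widehat{\psi}_\varepsilon=\psi$, in the buffer $[\varepsilon,2\varepsilon]$ we have $\widehat{\psi}_\varepsilon\le C_0$ by construction, and in the inner regime $\widehat{\psi}_\varepsilon(s,x)=G_z(s,\alpha^*(x))s^{1+\varepsilon}\le s^{\varepsilon}\le C_0$ by the choice of $\varepsilon$. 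Likewise $rG_z(r,\xi)$ is bounded on $[0,C]\times\S^n$ because $zG_z\in C([0,\infty)\times\S^n)$ by $G\in\mathcal{G}_I^0$. Integrating yields uniform upper bounds on $\mathcal{N}_\varepsilon$ and $\mathcal{D}_\varepsilon$.

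For the lower bound on $\mathcal{D}_\varepsilon$, the plan is to extract from the conservation identity a positive-measure region on which $r$ is bounded away from $0$. Since $G(0,\xi)=0$ and $G$ is continuous, the map $\beta\mapsto\max_{\xi}G(\beta,\xi)$ is continuous and vanishes at $\beta=0$, so one can fix $\beta_0>0$ independent of $t,\varepsilon$ with $\max_{\xi}G(\beta_0,\xi)\cdot\max p_\lambda\cdot|\S^n|\le V_0/2$. Splitting $\int_{\S^n}G(r,\xi)p_\lambda\,d\xi=V_0$ over $A_t:=\{\xi:r(\xi,t)>\beta_0\}$ and its complement, the complement contributes at most $V_0/2$, so $\int_{A_t}G(r,\xi)p_\lambda\,d\xi\ge V_0/2$. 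Bounding the integrand above by $\max_{[0,C]\times\S^n}G\cdot\max p_\lambda$ on $A_t$ forces $|A_t|\ge|A|_0>0$ with $|A|_0$ independent of $t,\varepsilon$. By continuity and strict positivity of $zG_z$ on $[\beta_0,C]\times\S^n$ we get $rG_z(r,\xi)\ge c_{rG_z}>0$ on $A_t$, so $\mathcal{D}_\varepsilon\ge c_{rG_z}\cdot\min p_\lambda\cdot|A|_0>0$.

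For the lower bound on $\mathcal{N}_\varepsilon$, I would run the same conservation argument through $\max_x u=\max_\xi r$ of Lemma~\ref{pro} to obtain a constant $c_r>0$ with $\max_x u(x,t)\ge c_r$ for all $t\in[0,T)$. Taking a maximizer $x_t$ and applying the support inequality $u(x,t)\ge u(x_t,t)\langle x,x_t\rangle$ from Lemma~\ref{pro} gives $u\ge c_r/2$ on the cap $\Sigma_{1/2}:=\{x:\langle x,x_t\rangle\ge 1/2\}$, whose spherical measure is a fixed positive constant. Shrinking $\delta$ if necessary so that $\varepsilon\in(0,\delta)$ implies $2\varepsilon<c_r/2$, the outer regime of \eqref{hatpsi eps} applies throughout $\Sigma_{1/2}$, hence $\widehat{\psi}_\varepsilon(u,x)=\psi(u,x)$ there; continuity and positivity of $\psi$ on the compact $[c_r/2,C]\times\S^n$ yield a uniform $c_\psi>0$, giving $\mathcal{N}_\varepsilon\ge\min f\cdot c_\psi\cdot|\Sigma_{1/2}|>0$. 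Combining the four estimates delivers \eqref{etaepsilon bonud}.

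The main obstacle is the quantitative step converting the qualitative conservation of $\widetilde V_{G,\lambda}$ into $\varepsilon$-independent constants $\beta_0$, $|A|_0$ and $c_r$; the rest is bookkeeping across the three pieces of the definition of $\widehat{\psi}_\varepsilon$. The only delicate point is ensuring that the cap $\Sigma_{1/2}$ lies in the outer regime of \eqref{hatpsi eps}, which is why the construction forces $\varepsilon$ to be small relative to the geometric lower bound $c_r$ coming from $V_0>0$.
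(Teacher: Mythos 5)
Your proposal is correct and follows essentially the same route as the paper: upper bounds on both integrals from the $C^0$ estimate of Lemma \ref{upperbound} and the three-regime construction of $\widehat{\psi}_\varepsilon$, a lower bound on the numerator via the cap $\Sigma_{1/2}$ from Lemma \ref{pro} combined with the conservation of $\widetilde V_{G,\lambda}$ to keep the cap inside the regime where $\widehat{\psi}_\varepsilon=\psi$, and a lower bound on the denominator from the same conservation law. The only variation is in that last step: the paper restricts to $\{r\ge c_0/2\}$ and uses the ratio $a_0=\min_{[c_0/2,C]\times\S^n} tG_z/G$ to pass from $\int G\,p_\lambda\,d\xi$ to $\int rG_z\,p_\lambda\,d\xi$, whereas you extract a $t$- and $\varepsilon$-independent lower bound on the measure of $\{r>\beta_0\}$ and then invoke the pointwise positivity of $zG_z$ on $[\beta_0,C]\times\S^n$ --- both are sound, and yours avoids introducing the quotient $tG_z/G$ at the cost of a small pigeonhole argument.
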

 
\begin{proof} 
 It follows from \eqref{ep},  Lemma \ref{monotone1}, $\,d\lambda(\xi)=p_{\lambda}(\xi)\,d\xi$, and  the fact that $G(t,\cdot)$ is strictly increasing on $t$  that 
\begin{eqnarray}\label{le1}\widetilde{V}_{G,\lambda}(\Omega_t) =\widetilde{V}_{G,\lambda}(\Omega_0)=\int_{\mathbb{S}^n}G\big(r_{\Omega_0}(\xi),\xi\big) \,d\lambda(\xi) \geq 
\int_{\mathbb{S}^n}G(c_0,\xi)p_\lambda(\xi)d\xi.
\end{eqnarray}This further implies $
\max_{x\in \mathbb{S}^n}r(x,t)\ge c_0 \ge 10\varepsilon.$  Moreover, by Lemmas \ref{pro} and \ref{upperbound} (in particular, \eqref{upper bound2}), one sees 
\begin{equation}  \sup_{(x, t)\in \S^n\times[0,T)}u(x, t)=\sup_{(x, t)\in \S^n\times[0,T)}r(x, t)\leq C. \label{max-s-r}\end{equation} 
Thus, for any $t\in [0, T)$, \begin{eqnarray}\label{upp-eta-top} \int_{\mathbb{S}^n}r(\xi, t)G_z(r(\xi, t),\xi) p_\lambda(\xi)d\xi \leq  \max_{(r,\xi)\in [0,C]\times{\S}^n}[rG_z(r,\xi)p_{\lambda}(\xi)] \cdot |\S^n|.\end{eqnarray}

Again let $x_t\in\mathbb{S}^n$ satisfy that $u({x_t},t)=\displaystyle\max_{x\in \S^n}u(x,t)$. Similar to \eqref{comp-max-supp},  for $\varepsilon\in (0, \delta)$ with $\delta<1$ and  for $x\in \Sigma_{1/2}$,  one has,
\begin{eqnarray} \label{comp-5-15} u(x,t)\ge\frac{1}{2}u({x_t},t)>\frac{c_0}{2} \ge5\varepsilon.\end{eqnarray} 
From \eqref{hatpsi eps}, the following fact holds: for  $x\in \Sigma_{1/2}$, 
\begin{eqnarray}\label{widetilde1}
\widehat{\psi}_\varepsilon\big(u(x,t),x\big)=\psi\big(u(x,t),x\big).
\end{eqnarray} By \eqref{max-s-r}, \eqref{comp-5-15} and  \eqref{widetilde1}, one gets  
 \begin{eqnarray*} \int_{\S^n}f\widehat{\psi}_\varepsilon(u(x,t),x) dx\geq \!\! 
   \int_{\Sigma_{1/2}}\!\!\!\!\! f\psi(u(x,t),x)\,dx    \geq \!   \min_{(s, x)\in [\frac{c_0}{2}, C]\times \mathbb{S}^n}\psi\big(s,x\big) \cdot \min_{x\in \S^n} f(x)\cdot \int_{\Sigma_{1/2}}\!\! \!\! \!\! \,dx. \end{eqnarray*} 
 This, together with \eqref{upp-eta-top}, further imply that, for all $t\in [0, T)$, 
 \begin{eqnarray} 
\eta_\varepsilon(t)&=&\frac{\int_{\S^n}f\widehat{\psi}_\varepsilon(u(x,t),x) dx}{\int_{\S^n}r(\xi,t)G_z(r(\xi,t),\xi) p_\lambda(\xi)d\xi} \nonumber \\ &\geq& \frac{\displaystyle\min_{(s, x)\in [\frac{c_0}{2}, C]\times \mathbb{S}^n}\psi\big(s,x\big)\cdot \displaystyle\min_{x\in \S^n} f(x)\cdot\int_{\Sigma_{1/2}}\!\! \!\! \!\! \,dx}{\displaystyle \max_{(t,\xi)\in [0,C]\times{\S}^n}[tG_z(t,\xi)p_{\lambda}(\xi)] \cdot |\S^n|}>0.  \label{first-ineq}
\end{eqnarray}  
It follows from Lemma \ref{monotone1} and inequality \eqref{le1}that  
\begin{eqnarray*}\nonumber \int_{\{\xi\in\mathbb{S}^n: r(\xi, t)\geq \frac{c_0}{2} \}}G\big(r(\xi, t),\xi\big)p_\lambda(\xi)d\xi&=&\widetilde{V}_{G,\lambda}(\Omega_t)-\int_{\{\xi\in\mathbb{S}^n: r(\xi, t)< \frac{c_0}{2} \}}G\big(r(\xi, t),\xi\big)p_\lambda(\xi)d\xi \\ &\geq&
  \int_{\mathbb{S}^n}G(c_0,\xi)p_\lambda(\xi)d\xi-\int_{\mathbb{S}^n}G\Big( \frac{c_0}{2},\xi\Big)p_\lambda(\xi)d\xi \\&\geq& \min_{\xi\in \mathbb{S}^n} \Big[ G(c_0,\xi)p_{\lambda}(\xi)-G\Big( \frac{c_0}{2},\xi\Big)p_{\lambda}(\xi) \Big] \cdot |\mathbb{S}^n|. \end{eqnarray*}
This further implies
\begin{eqnarray}\nonumber
\int_{\mathbb{S}^n}r(\xi,t)G_z(r(\xi,t),\xi) p_\lambda(\xi)d\xi  &\geq& \int_{\{\xi\in\mathbb{S}^n: r(\xi, t) \geq \frac{c_0}{2} \}} r(\xi,t)G_z(r(\xi,t),\xi) p_\lambda(\xi)d\xi  \\ &\geq& a_0  \int_{\{\xi\in\mathbb{S}^n: r(\xi, t)\geq \frac{c_0}{2} \}} G(r(\xi,t),\xi) p_\lambda(\xi)d\xi \nonumber \\ &\geq & a_0 \min_{\xi\in \mathbb{S}^n} \Big[ G(c_0,\xi)p_{\lambda}(\xi)-G\Big( \frac{c_0}{2},\xi\Big)p_{\lambda}(\xi) \Big] \cdot |\mathbb{S}^n|, \label{3.11}
\end{eqnarray} where the constant $a_0>0$ can be taken as $$a_0=\min_{(t, \xi)\in [\frac{c_0}{2}, C]\times \mathbb{S}^n}\frac{tG_z(t, \xi)}{G(t, \xi)}>0.$$

Recall that $\delta\in (0, 1)$  is a constant satisfying \eqref{def-delta}, namely,   \begin{eqnarray*} \max_{(s,\xi)\in [0,\delta]\times\S^n} sG_z(s,\xi)\le 1. \end{eqnarray*} 
It follows from  \eqref{hatpsi eps} and \eqref{constant-c-0} that  $\widehat{\psi}_\varepsilon(s,x)\le C_0=\max\{1, \max_{[0, 2]\times \S^n}\psi(s, x)\}$ on $[0,2\varepsilon]$, 
and $\widehat{\psi}_\varepsilon(s,x)=\psi(s,x)$ on $[2\varepsilon,\infty)$.
Hence, \eqref{max-s-r} implies that, for all $t\in [0, T)$, 
\begin{eqnarray*}  
\int_{\S^n}f\widehat{\psi}_\varepsilon(u(x, t), x)\,dx&=& \int_{\{x\in\S^n: u(x, t)\in[0,2\varepsilon)\cup[2\varepsilon,C]\}} f\widehat{\psi}_\varepsilon(u(x, t), x)\,dx 
\\&\le&\int_{\S^n} \Big(C_0+\psi\big(u(x,t), x\big)\Big)f\,dx \\ &\le& \Big\{C_0+\max_{(s,x)\in [0,C]\times\S^n} \psi\big(s, x\big)\Big\}\cdot\max_{x\in\S^n} f(x)\cdot |\S^n|. 
\end{eqnarray*} Together with \eqref{3.11}, one has, for all $t\in [0, T)$,
 \begin{eqnarray} 
\eta_\varepsilon(t)&=&\frac{\int_{\S^n}f\widehat{\psi}_\varepsilon(u(x,t),x) dx}{\int_{\S^n}r(\xi,t)G_z(r(\xi,t),\xi) p_\lambda(\xi)d\xi} \nonumber \\ &\leq& \frac{\Big[C_0+\displaystyle\max_{(s,x)\in [0,C]\times\S^n} \psi\big(s, x\big)\Big]\cdot\displaystyle\max_{x\in\S^n} f(x)\cdot|\S^n| 
}{a_0 \min_{\xi\in \mathbb{S}^n} \Big[ p_{\lambda}(\xi)G(c_0,\xi)-p_{\lambda}(\xi)G\Big( \frac{c_0}{2},\xi\Big)\Big]}. \label{eta-est-2108}
\end{eqnarray} In view of \eqref{first-ineq} and \eqref{eta-est-2108}, a constant $C_2>0$ independent of $\varepsilon$ and $t\in [0, T)$  can be found so that \eqref{etaepsilon bonud} holds.   
\end{proof}
  
 The following lemma provides a uniform lower bound for $u(x, t)$ on $t\in [0, T)$. 
  \begin{lem}\label{lowerbound} Let $f$, $p_\lambda$, $G$ and $\Psi$ satisfy conditions stated in Lemma \ref{upperbound}.  Let $u(\cdot,t)$ be a positive, smooth and uniformly convex solution to \eqref{uSF2}.  
Then there is a constant $C_{\varepsilon}>0$ depending only on $\varepsilon$, $f$, $p_\lambda$, $G$, $\psi$ and $\Omega_0$, such that, for all $t\in[0,T)$,
\begin{eqnarray*}
\min_{x\in \mathbb{S}^n}u(x, t)\ge1/C_\varepsilon.
\end{eqnarray*}
\end{lem}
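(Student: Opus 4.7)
}
The plan is to run the parabolic maximum principle on $u_{\min}(t):=\min_{x\in\S^n} u(x,t)$ and exploit the carefully engineered factor $s^{1+\varepsilon}$ in the definition \eqref{hatpsi eps} of $\widehat\psi_\varepsilon$ near the origin. Let $x_t\in\S^n$ be a minimizer, so $\nabla u(x_t,t)=0$. By \eqref{nablau} this forces $r(x_t,t)=u_{\min}(t)$ and $\alpha^*(x_t)=x_t$. Moreover, $\nabla^2 u(x_t,t)\ge 0$ yields $\det(\nabla^2 u+uI)\ge u_{\min}^n$, so that $K\le u_{\min}^{-n}$ at $x_t$. Substituting into \eqref{uSF2} and using $\eta_\varepsilon(t)\ge 1/C_2$ from Lemma \ref{etaepsbound} gives, in the Dini-derivative sense,
\begin{equation*}
\frac{d}{dt}u_{\min}(t)\;\ge\;-\frac{f(x_t)}{p_\lambda(x_t)}\cdot\frac{\widehat\psi_\varepsilon(u_{\min},x_t)}{G_z(u_{\min},x_t)}\;+\;\frac{u_{\min}}{C_2}.
\end{equation*}

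The crucial step is to control the ratio $\widehat\psi_\varepsilon/G_z$ from above when $u_{\min}$ is small. Suppose $u_{\min}(t)\le\varepsilon$. Since $\alpha^*(x_t)=x_t$, the definition \eqref{hatpsi eps} yields exactly
\begin{equation*}
\widehat\psi_\varepsilon(u_{\min},x_t)\;=\;G_z(u_{\min},x_t)\,u_{\min}^{1+\varepsilon},
\qquad\text{hence}\qquad
\frac{\widehat\psi_\varepsilon(u_{\min},x_t)}{G_z(u_{\min},x_t)}\;=\;u_{\min}^{1+\varepsilon}.
\end{equation*}
Setting $M:=\max_{\S^n} f/\min_{\S^n}p_\lambda<\infty$, the preceding inequality becomes
\begin{equation*}
\frac{d}{dt}u_{\min}(t)\;\ge\;u_{\min}(t)\Big(\tfrac{1}{C_2}-M\,u_{\min}(t)^{\varepsilon}\Big)\qquad\text{whenever }0<u_{\min}(t)\le\varepsilon.
\end{equation*}

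Define the barrier $c_\varepsilon:=\min\bigl\{\varepsilon,(2MC_2)^{-1/\varepsilon}\bigr\}>0$. For $u_{\min}\le c_\varepsilon$ the bracket is $\ge 1/(2C_2)$, so $(d/dt)u_{\min}>0$ at such a moment. By \eqref{ep} and Lemma \ref{pro} we have $u_{\min}(0)=\min_\xi r_{\Omega_0}(\xi)\ge c_0\ge 10\varepsilon>c_\varepsilon$; hence a standard first-touching-time argument rules out $u_{\min}$ ever reaching $c_\varepsilon$ (at any putative first time $t_*$ of contact, the strictly positive lower bound on the Dini derivative contradicts approach from above). Therefore $u_{\min}(t)\ge c_\varepsilon$ on $[0,T)$, and one may take $C_\varepsilon:=1/c_\varepsilon$. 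The only subtle point is that $u_{\min}(t)$ is Lipschitz but not necessarily $C^1$, so the differential inequality must be read in the Dini sense; this is standard and the barrier argument goes through. The genuine substance of the lemma is algebraic rather than analytic, namely that the modification \eqref{hatpsi eps} forces $\widehat\psi_\varepsilon/G_z\sim s^{1+\varepsilon}$ at the origin, whose super-linear power is exactly what allows the positive drift $\eta_\varepsilon u$ to beat the flux term for small $u_{\min}$; without this engineering, the assumption \eqref{good-condition-2} would leave the flux term uncontrolled and a uniform (in $t$) lower bound would be unattainable.
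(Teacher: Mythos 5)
Your argument is correct and follows essentially the same route as the paper: evaluate the flow at the spatial minimum point, use $\alpha^*(\bar x_t)=\bar x_t$, $r=u_{\min}$, $\det(\nabla^2u+uI)\ge u_{\min}^n$ and the engineered identity $\widehat\psi_\varepsilon/G_z=u_{\min}^{1+\varepsilon}$ for $u_{\min}\le\varepsilon$, then combine with the lower bound on $\eta_\varepsilon$ from Lemma \ref{etaepsbound} and the initial bound \eqref{ep} to run a first-touching barrier argument. Your write-up is if anything slightly more careful than the paper's (explicit barrier constant with the factor $2$, and the Dini-derivative caveat for $u_{\min}(t)$), but the substance is identical.
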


\begin{proof} Let $\bar{x}_t\in \S^n$ be such that $u(\bar{x}_t,t)= \min_{x\in \mathbb{S}^n}u(x,t)$. Without loss of generality, assume that  $u(\bar{x}_t,t) <\varepsilon$ holds for some $t\in [0, T)$ (as otherwise, there is nothing to prove).   
From \eqref{nablau}  and Lemma \ref{pro}, $\alpha^*(\bar{x}_t)=\bar{x}_t$, $\nabla u(\bar{x}_t,t)=0$, and
\begin{equation}\label{min-equal-1} r(\bar{x}_t, t)=\min_{\xi\in \S^n} r(\xi, t)= \min_{x\in \mathbb{S}^n}u(x,t)=u(\bar{x}_t,t).\end{equation}   It follows from \eqref{hatpsi eps} and  the fact that $\nabla^2 u(\bar{x}_t,t)$ is positive semi-definite that 
\begin{eqnarray} \widehat{\psi}_\varepsilon(u(\bar{x}_t,t),\bar{x}_t)G_z(u(\bar{x}_t,t),\bar{x}_t)^{-1}=\left(u(\bar{x}_t,t)\right)^{1+\varepsilon},\nonumber \\  
 \det(\nabla^2 u(\bar{x}_t,t)+u(\bar{x}_t,t)I)\geq u(\bar{x}_t,t)^n. \label{estimate-curvature}\end{eqnarray} Together with \eqref{curvature-formula-1} and \eqref{uSF2}, one has, 
\begin{eqnarray*}
\partial_tu(\bar{x}_t,t)&\ge&-f(\bar{x}_t) \left(u(\bar{x}_t,t)\right)^{1+\varepsilon}\left(u(\bar{x}_t,t)\right)^n p_\lambda^{-1}(\bar{x}_t)\left(u(\bar{x}_t,t)\right)^{-n} +u(\bar{x}_t,t)\eta_\varepsilon(t)
\\&\ge&f(\bar{x}_t)u(\bar{x}_t,t)\left(-p_\lambda^{-1}(\bar{x}_t)\left(u(\bar{x}_t,t)\right)^\varepsilon+\frac{\eta_\varepsilon(t)}{f(\bar{x}_t)}\right).
\end{eqnarray*}
This implies either $\partial_tu(\bar{x}_t,t)\ge0$, or
\[u(\bar{x}_t,t)>\left(\frac{\eta_\varepsilon(t)\displaystyle\min_{x\in \S^n}p_\lambda(\xi)}{\displaystyle\max_{x\in \S^n}f(x)}\right)^\frac{1}{\varepsilon}.\]
In view of \eqref{ep} and \eqref{etaepsilon bonud}, one gets 
\begin{eqnarray*}
\min_{x\in \mathbb{S}^n}u(x, t)\ge\min\bigg\{\varepsilon,  \left(\frac{  \min_{\xi\in\S^n}p_\lambda(\xi)}{C_2 \cdot \max_{x\in \mathbb{S}^n}f(x)}\right)^\frac{1}{\varepsilon} \bigg\}=: \frac{1}{C_{\varepsilon}},
\end{eqnarray*} where $C_{\varepsilon}>0$ is a constant independent of $t\in [0, T)$. This completes the proof.
\end{proof}

Note that the uniform lower bound for  $u(\cdot,t)$ in Lemma \ref{lowerbound} depends on $\varepsilon\in (0,\delta)$. However, such a lower bound may go to zero as $\varepsilon\to 0^+$. The following lemma gives bounds (both upper and lower), which are independent of $\varepsilon$, for the maximal and minimal widths of $\Omega_t$ on $t\in [0, T)$.  Recall that the maximal and  minimal widths of $\Omega$ defined in \eqref{mmwide}, respectively, are
\begin{eqnarray*} 
w_\Omega^+=\max_{x\in\mathbb{S}^n}\{u_\Omega(x)+u_\Omega(-x)\}\ \ \mathrm{and}\ \ 
w_\Omega^-=\min_{x\in\mathbb{S}^n}\{u_\Omega(x)+u_\Omega(-x)\}.
\end{eqnarray*}

\begin{lem}\label{widthbound}Let $f$, $p_\lambda$, $G$ and $\Psi$ satisfy conditions stated in Lemma \ref{upperbound}.   Let $u(\cdot,t)$ be a positive, smooth and uniformly convex solution to \eqref{uSF2}.  
Then there is a constant $C_3>0$ depending only on $f$, $p_\lambda$, $G$, $\Psi$, and $\Omega_0$, but independent of $\varepsilon$, such that,  for all $t\in[0,T)$, 
\begin{eqnarray*}
1/C_3 \le w_{\Omega_t}^-\le w_{\Omega_t}^+\le C_3.
\end{eqnarray*}
\end{lem}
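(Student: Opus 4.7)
The plan is to obtain the upper bound directly from Lemma \ref{upperbound} and then to derive the lower bound through a geometric argument based on the volume-preservation identity of Lemma \ref{monotone1}, since the $\varepsilon$-dependent pointwise estimate of Lemma \ref{lowerbound} is too weak for the purpose.

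\medskip

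The upper bound is routine: by definition $w_{\Omega_t}^{+}\le 2\max_{x\in\S^n} u(x,t)$, and Lemma \ref{upperbound} gives $\max_{x\in\S^n} u(x,t)\le C$ with $C$ independent of $\varepsilon$, so $w_{\Omega_t}^{+}\le 2C$ at once. The harder part is the lower bound $w_{\Omega_t}^{-}\ge 1/C_3$ uniform in $\varepsilon\in(0,\delta)$ and $t\in[0,T)$. I argue by contradiction. Set $V_0=\widetilde{V}_{G,\lambda}(\Omega_0)$, which is strictly positive since $\Omega_0\in\mathcal{K}_0$ and $G\in\mathcal{G}_I^0$ is strictly increasing in its first variable with $G(0,\xi)=0$. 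Let $x_0=x_0(t)\in\S^n$ realize $w_{\Omega_t}^{-}=u(x_0,t)+u(-x_0,t)$, so that $u(\pm x_0,t)\le w_{\Omega_t}^{-}$. For any $\xi\in\S^n$ the point $r_{\Omega_t}(\xi)\xi$ lies in $\Omega_t$, hence
\[
r_{\Omega_t}(\xi)|\langle \xi,x_0\rangle|\le w_{\Omega_t}^{-}.
\]

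\medskip

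The key step is to split the sphere into $A_c=\{\xi:|\langle\xi,x_0\rangle|\ge c\}$ and its complement $B_c$, for a parameter $c\in(0,1)$ to be chosen. On $A_c$ the above inequality gives $r_{\Omega_t}(\xi)\le w_{\Omega_t}^{-}/c$; on $B_c$ Lemma \ref{pro} together with Lemma \ref{upperbound} yields $r_{\Omega_t}(\xi)\le\max_{x\in\S^n} u(x,t)\le C$. Crucially, by rotation invariance of $d\xi$, the quantity $|B_c|$ depends only on $c$, not on $x_0$, and $|B_c|\to 0$ as $c\to 0^+$. Let $M=\max_{(s,\xi)\in[0,C]\times\S^n}G(s,\xi)p_\lambda(\xi)$, which is finite by continuity. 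First choose $c\in(0,1)$ such that $M|B_c|\le V_0/3$; then, using continuity of $G$ and $G(0,\xi)=0$, choose $w_0>0$ so that $\max_{(s,\xi)\in[0,w_0]\times\S^n}G(s,\xi)p_\lambda(\xi)\cdot|\S^n|\le V_0/3$. If the lemma failed we would have $w_{\Omega_t}^{-}<cw_0$ at some $t$, hence $r_{\Omega_t}(\xi)<w_0$ on $A_c$, and splitting the integral yields
\[
\widetilde{V}_{G,\lambda}(\Omega_t)=\int_{A_c}G(r_{\Omega_t},\xi)p_\lambda\,d\xi+\int_{B_c}G(r_{\Omega_t},\xi)p_\lambda\,d\xi\le \tfrac{V_0}{3}+\tfrac{V_0}{3}<V_0,
\]
which contradicts $\widetilde{V}_{G,\lambda}(\Omega_t)=V_0$ from Lemma \ref{monotone1}. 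Therefore $w_{\Omega_t}^{-}\ge cw_0=:1/C_3$, and $C_3$ depends only on $f$, $p_\lambda$, $G$, $\Psi$, and $\Omega_0$.

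\medskip

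The main obstacle is precisely that the $\varepsilon$-dependent Lemma \ref{lowerbound} cannot control individual support-function values uniformly in $\varepsilon$, so any uniform width estimate must exploit the coupling $u(x)+u(-x)$ between antipodal directions. The way I plan to achieve this coupling is through the global conserved quantity $\widetilde{V}_{G,\lambda}$: narrow bodies force the radial function to be small on all of $A_c$, and the remaining equatorial band $B_c$ has small measure uniformly in $x_0$ by rotation invariance, so the integral cannot sustain the value $V_0$. No additional input beyond the stated Lemmas \ref{monotone1}, \ref{pro}, and \ref{upperbound}, together with the structural properties $G\in\mathcal{G}_I^0$, is required.
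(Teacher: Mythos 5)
Your proof is correct and follows essentially the same strategy as the paper: both arguments exploit the conservation of $\widetilde{V}_{G,\lambda}$ along the flow together with a decomposition of $\S^n$ into a thin equatorial band orthogonal to the minimal-width direction (where $r_{\Omega_t}\le C$ and the measure is small) and the complementary caps (where the width forces $r_{\Omega_t}$ to be small). The only differences are cosmetic: the paper packages the bound $r_{\Omega_t}(\xi)\,|\langle\xi,x_0\rangle|\le w_{\Omega_t}^-$ via a circumscribing cylinder $\mathbb{L}_t$ and finishes by inverting $G$, whereas you work with $\Omega_t$ directly and finish with a pre-chosen threshold $w_0$.
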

\begin{proof} For each $t\in [0, T)$, let $l_t=2w_{\Omega_t}^-$ and $\mathbb{L}_t=\mathbb{B}_R^n\times[-l_t,l_t]$, where $\mathbb{B}_R^n$ is the n-dimension ball centered at the origin with radius $R=2\sqrt{n}C$ for $C$ the constant given in \eqref{upper bound2} by Lemma \ref{upperbound}. Let $\{e_1,\cdots,e_{n+1}\}$ be a local orthonormal frame field on $\mathbb{R}^{n+1}$ and (without loss of generality) $u_{\Omega_t}(e_{n+1})\leq \omega_{\Omega_t}^{-}$. Then, the support function $u_{\mathbb{L}_t}(e_{n+1})=l_t$. Moreover, $\Omega_t\subseteq\mathbb{L}_t$ and hence  $r_{\mathbb{L}_t}(\xi)\le\sqrt{2}R$ for all $\xi\in\mathbb{S}^n$.  Due to the fact that $G(t, \cdot)$ is increasing on $t\in [0, \infty)$, one gets 
\begin{equation} \label{VGL} 
\widetilde{V}_{G,\lambda}(\Omega_t)\le\widetilde{V}_{G,\lambda}(\mathbb{L}_t)\!=\!\! \int_{\mathbb{S}^n}\!\!G\big(r_{\mathbb{L}_t}(\xi),\xi\big) p_{\lambda}(\xi) \,d\xi \le A\cdot |\mathbb{S}^n|,  
\end{equation} where $A=\max_{\xi\in\mathbb{S}^n}\big[G(\sqrt{2}R,\xi)p_\lambda(\xi)\big]$ is a positive constant. 

For any $\beta\in (0, 1)$, let $\widehat{\Sigma}_{\beta}=\{\xi\in \S^n: |\langle \xi, e_{n+1}\rangle|\leq \beta  \}.$ It is easily checked that \begin{equation}\label{est-beta-area}\int_{\widehat{\Sigma}_{\beta}}\,d\xi \leq \beta \cdot \alpha_n,\end{equation}  holds for a fixed constant  $\alpha_n>0$  only depending on $n$. In particular, $|\S^n|\leq \alpha_n$. Together with \eqref{VGL}, one can get  $\widetilde{V}_{G,\lambda}(\Omega_0)=\widetilde{V}_{G,\lambda}(\Omega_t)\leq \alpha_n A$ by Lemma \ref{monotone1}.  Let    
\begin{eqnarray}\label{delta}
\delta_1= \frac{\widetilde{V}_{G,\lambda}(\Omega_t)}{2\alpha_nA} = \frac{\widetilde{V}_{G,\lambda}(\Omega_0)}{2\alpha_nA}\in (0, 1/2].
\end{eqnarray}   It is easily checked that, for all $\xi \in \S^n\setminus \widehat{\Sigma}_{\delta_1}$,  
\begin{eqnarray}\label{rL}
r_{\mathbb{L}_t}(\xi)\leq \frac{u_{\mathbb{L}_t}(e_{n+1})}{|\langle \xi, e_{n+1}\rangle|} \le\frac{l_t}{\delta_1}.
\end{eqnarray}
By \eqref{est-beta-area}, \eqref{delta} and \eqref{rL}, one has
\begin{eqnarray}\label{VG ineq}
\nonumber\widetilde{V}_{G,\lambda}(\mathbb{L}_t)&=&\int_{ \S^n\setminus \widehat{\Sigma}_{\delta_1}}G(r_{\mathbb{L}_t}(\xi),\xi)p_\lambda(\xi)d\xi
+\int_{ \widehat{\Sigma}_{\delta_1}}G(r_{\mathbb{L}_t}(\xi),\xi)p_\lambda(\xi)d\xi
\nonumber\\&\le&\int_{ \S^n\setminus \widehat{\Sigma}_{\delta_1}}G(l_t/\delta_1,\xi)p_\lambda(\xi)d\xi
+\int_{ \widehat{\Sigma}_{\delta_1}}G(\sqrt{2}R,\xi)p_\lambda(\xi)d\xi
\nonumber\\&\le&\int_{\mathbb{S}^n}G(l_t/\delta_1,\xi)p_\lambda(\xi)d\xi +A\alpha_n\delta_1
\nonumber\\&\le&\int_{\mathbb{S}^n}G(l_t/\delta_1,\xi)p_\lambda(\xi)d\xi +\frac{1}{2}\widetilde{V}_{G,\lambda}(\Omega_t).\nonumber
\end{eqnarray}
Together with \eqref{VGL} and Lemma \ref{monotone1}, the following holds:  \begin{eqnarray*}
\frac{\widetilde{V}_{G,\lambda}(\Omega_t)}{2} \le\int_{\mathbb{S}^n}G(l_t/\delta_1,\xi)p_\lambda(\xi)d\xi \leq  |\S^n| \cdot  \max_{\xi\in \S^n}p_\lambda(\xi) \cdot  \max_{\xi\in \S^n} G(l_t/\delta_1,\xi).
\end{eqnarray*} Hence, there exists $\xi_0\in \S^n$ (possibly depending on $t$) such that 
 \begin{eqnarray}\label{VG1}
G(l_t/\delta_1,\xi_0)\geq \frac{\widetilde{V}_{G,\lambda}(\Omega_t)}{2   |\S^n| \cdot \max_{\xi\in \S^n}p_\lambda(\xi)}.
\end{eqnarray} 
As $G_z>0$, then $G(t, \cdot)$ is strictly increasing on $t\in (0, \infty)$. Thus,  for any $\xi\in \S^n$, the function $G(t, \xi): t\mapsto [0, \infty)$ has its inverse function, which will be denoted by $\overline{G}(t, \xi): t\mapsto [0, \infty).$  It follows from \eqref{VG1} that  
\begin{eqnarray*}  
  l_t  \geq  \delta_1  \cdot \overline{G}\bigg(\frac{\widetilde{V}_{G,\lambda}(\Omega_t)}{2   |\S^n| \cdot \max_{\xi\in \S^n}p_\lambda(\xi)}, \xi_0\bigg).
\end{eqnarray*} Note that the function $\overline{G}(t, \xi): [0, \infty)\rightarrow [0, \infty)$ is continuous. This, together with $l_t=2w_{\Omega_t}^-$, further imply that, for all $t\in [0, T)$,  
\begin{eqnarray*}
  w_{\Omega_t}^- =\frac{l_t}{2}  \geq  \frac{\delta_1}{2} \cdot \min_{\widetilde{\xi}\in \S^n} \overline{G}\bigg(\frac{\widetilde{V}_{G,\lambda}(\Omega_0)}{2   |\S^n| \cdot \max_{\xi\in \S^n}p_\lambda(\xi)},\widetilde{\xi}\bigg),
\end{eqnarray*}  and the constant in the right hand is clearly independent of $\varepsilon$ and $t\in [0, T)$. 

On the other hand, by Lemma \ref{upperbound}, one sees that $$w_\Omega^+=\max_{x\in\mathbb{S}^n}\{u_\Omega(x)+u_\Omega(-x)\}\leq 2C.$$ So, the desired constant $C_3$ can be given by: 
$$\frac{1}{C_3}=\min \left\{ \frac{1}{2C}, \ \frac{\delta_1}{2} \cdot \min_{\widetilde{\xi}\in \S^n} \overline{G}\bigg(\frac{\widetilde{V}_{G,\lambda}(\Omega_0)}{2   |\S^n| \cdot \max_{\xi\in \S^n}p_\lambda(\xi)},\widetilde{\xi}\bigg)\right\}.$$ This concludes the proof. 
\end{proof}

The following lemma gives the $C^0$ estimates for the flow \eqref{SSF}.

\begin{lem}  Let $f$ and $p_\lambda$ be two smooth positive functions on $\mathbb{S}^n$. Let $\Psi\in \mathcal{G}_I^0$ be a  smooth function such that \eqref{Psi-comp-condition}. 
  Moreover,  let  $G\in \mathcal{G}_I^0$   be a smooth function such that   \eqref{good-condition-1}.     Let $u(\cdot,t)$ for $t\in[0,T)$ be a positive, smooth and uniformly convex solution to \eqref{uSSF}.
Then there is a constant $C>0$ depending only on $f$, $p_\lambda$, $G$, $\Psi$, and $\Omega_0$, such that, for all $t\in [0, T)$, 
\begin{eqnarray*} 
1/C\le \min_{x\in \S^n} u(x,t)\le\max_{x\in \S^n} u(x,t)\le C \ \ \mathrm{and} \ \  
\max_{\mathbb{S}^n}|\nabla u(x, t)|\le C.
\end{eqnarray*}
\end{lem}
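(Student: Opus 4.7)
The plan is to mirror the strategy of Lemmas \ref{upperbound}--\ref{widthbound} for flow \eqref{SF2}, with the simplification that, under condition \eqref{good-condition-1}, a direct maximum principle argument at the minimum of $u(\cdot,t)$ yields the $C^0$ lower bound without any need of the cut-off $\widehat\psi_\varepsilon$. This is precisely the scenario that \eqref{good-condition-1} is designed to make tractable.

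\emph{Upper bound on $u$ and $|\nabla u|$.} Invoking Lemma \ref{mono} gives $\int_{\S^n} f(x)\,\Psi(u(x,t),x)\,dx \leq \mathcal{J}(u_0)$. Letting $x_t \in \S^n$ satisfy $u(x_t,t) = \max_{x}u(\cdot,t)$, Lemma \ref{pro} gives $u(x,t) \geq u(x_t,t)/2$ on $\Sigma_{1/2} = \{x : \langle x, x_t\rangle \geq 1/2\}$, so
\[
\min_{x \in \S^n}\Psi\!\bigl(u(x_t,t)/2,\,x\bigr) \cdot \min_{x\in\S^n} f(x) \cdot |\Sigma_{1/2}| \leq \mathcal{J}(u_0).
\]
The compatibility assumption \eqref{Psi-comp-condition} then bounds $u(x_t,t)$ uniformly from above, and the gradient bound follows from \eqref{nablau} and Lemma \ref{pro} via $|\nabla u|\leq r=\max u$.

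\emph{Two-sided bound on $\eta(t)$.} I would rerun the width-bound argument of Lemma \ref{widthbound} for flow \eqref{SSF}; it uses only preservation of $\widetilde{V}_{G,\lambda}(\Omega_t)$ (Lemma \ref{Volume}) and the just-established upper bound on $u$. This yields $w_{\Omega_t}^- \geq 1/C_3$ uniformly in $t$, and hence a spherical region of uniform positive area on which $u(\cdot,t)$, and therefore $\psi(u(\cdot,t),\cdot)$, is bounded below by a positive constant. Combining with the upper bound on $r$ in the denominator of \eqref{eta def} yields $1/C_2 \leq \eta(t) \leq C_2$ uniformly in $t$.

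\emph{Uniform lower bound on $u$ (main step).} Let $\bar{x}_t\in\S^n$ satisfy $u(\bar{x}_t,t)=\min_x u(\cdot,t)$. By \eqref{nablau} and Lemma \ref{pro}, $\nabla u(\bar{x}_t,t)=0$, $\alpha^*(\bar{x}_t)=\bar{x}_t$, and $r(\bar{x}_t,t)=u(\bar{x}_t,t)$; furthermore $\nabla^2 u(\bar{x}_t,t)\geq 0$ gives $\det(\nabla^2 u + uI)\geq u^n$, so $r^n K\leq 1$ at $\bar{x}_t$. Substituting into \eqref{uSSF} yields
\[
\partial_t u(\bar{x}_t,t) \geq u(\bar{x}_t,t)\!\left(\eta(t) - \frac{f(\bar{x}_t)}{p_\lambda(\bar{x}_t)} \cdot \frac{\psi(u(\bar{x}_t,t),\bar{x}_t)}{u(\bar{x}_t,t)\,G_z(u(\bar{x}_t,t),\bar{x}_t)}\right).
\]
The main obstacle is to argue that the ratio $\psi(s,x)/(sG_z(s,x))$ tends to $0$ uniformly in $x\in\S^n$ as $s\to 0^+$: this has to be extracted from the pointwise $\liminf$ in \eqref{good-condition-1} via continuity of $\psi/(zG_z)$ on $(0,\infty)\times\S^n$ and compactness of $\S^n$ (a finite-cover argument). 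Combined with the positive lower bound on $\eta(t)$ from the previous step, this yields some $\delta_0>0$ such that $u(\bar{x}_t,t)<\delta_0$ forces the parenthesized quantity to be positive, hence $\partial_t u(\bar{x}_t,t)>0$. Consequently $\min_x u(\cdot,t) \geq \min\{\min u_0,\,\delta_0\}$, producing the desired uniform lower bound.
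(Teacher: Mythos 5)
Your proposal is correct and follows essentially the same route as the paper: upper bounds on $u$ and $|\nabla u|$ via the monotone functional $\mathcal{J}$ and Lemma \ref{pro}, two-sided bounds on $\eta(t)$ by rerunning the arguments based on the preservation of $\widetilde{V}_{G,\lambda}$, and the uniform lower bound on $u$ via the maximum principle at the minimum point $\bar{x}_t$ using $\det(\nabla^2u+uI)\ge u^n$ together with condition \eqref{good-condition-1}. The uniformity in $x$ of the divergence in \eqref{good-condition-1}, which you flag as the main obstacle, is handled no more carefully in the paper (it simply asserts a $\delta_0$ independent of the varying point $\bar{x}_t$), so your treatment matches the paper's own level of rigor.
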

\begin{proof} Consider the flow defined in \eqref{SSF} or \eqref{uSSF}.   Lemma \ref{mono} asserts that $\frac{d}{dt}\mathcal{J}(u(\cdot,t))=0$ holds if and only if $u(\cdot,t)$ solves \eqref{elliptic}. Following the proofs in   Lemmas \ref{upperbound} and \ref{etaepsbound},   the following statements hold for all $t\in [0, T)$: 
\begin{eqnarray}
\label{upper11}\max_{x\in \mathbb{S}^n}u(x,t)\le C,\\
\label{gradient11}\max_{x\in \mathbb{S}^n}|\nabla u|(x,t)\le C,\\
1/C\le\eta(t)\le C,\label{etabound1}
\end{eqnarray}
where the constant $C>0$ (abuse of notations) depends only on $f, p_\lambda, G, \Psi$ and  $\Omega_0$ but is independent of $t\in [0, T)$.

Let $\bar{x}_t\in\S^n$ satisfy that $u(\bar{x}_t,t)= \min_{x\in \S^n}u(x,t).$ Clearly, $\nabla u(\bar{x}_t,t)=0$ and $\nabla^2 u(\bar{x}_t,t)\ge 0$. A calculation similar to that in Lemma \ref{lowerbound}, by using \eqref{min-equal-1} and  \eqref{estimate-curvature}, shows that \begin{eqnarray}
\nonumber\partial_t u(\bar{x}_t,t) &\geq&-f(\bar{x}_t)\psi(u(\bar{x}_t,t), \bar{x}_t)G_z(u(\bar{x}_t,t), \bar{x}_t)^{-1}p_\lambda^{-1}(\bar{x}_t) +u(\bar{x}_t,t)\eta(t) \\&=&\frac{\psi(u(\bar{x}_t,t), \bar{x}_t)}{G_z(u(\bar{x}_t,t), \bar{x}_t)}\eta(t)\left(\frac{G_z(u(\bar{x}_t,t), \bar{x}_t)u(\bar{x}_t,t)}{\psi(u(\bar{x}_t,t), \bar{x}_t)}-\frac{f(\bar{x}_t)}{p_\lambda(\bar{x}_t)\eta(t)}\right).\label{p11}
\end{eqnarray} Recall that $G$ and $\psi$ satisfy \eqref{good-condition-1}, i.e.,  
$\displaystyle\liminf_{s\to0^+}\frac{sG_z(s,x)}{\psi(s,x)}=\infty$ holds for all $x \in \S^n$.  Let $M$ be a finite constant such that $$M>C \cdot \max_{x\in \S^n}\Big(\frac{f(x)}{p_{\lambda}(x)}\Big).$$ Then, there exists $\delta_0$ depending on $M$ so that for any $s\in (0, \delta_0)$,  
\begin{eqnarray}\label{part i}
\frac{sG_z(s,\bar{x}_t)}{\psi(s, \bar{x}_t)}\ge M.
\end{eqnarray} Without loss of generality, let $u(\bar{x}_t,t)<\delta_0$.  By \eqref{etabound1},  \eqref{p11} and  \eqref{part i}, one has 
\begin{eqnarray*}
 \partial_t u(\bar{x}_t,t) \geq \frac{\psi(u(\bar{x}_t,t), \bar{x}_t)}{G_z(u(\bar{x}_t,t), \bar{x}_t)}\eta(t)\left(M-\frac{f(\bar{x}_t)}{p_\lambda(\bar{x}_t)\eta(t)}\right)>0.
\end{eqnarray*} This further yields, for all $t\in [0, T)$,  \begin{eqnarray}
u(\bar{x}_t,t)=\min_{x\in \mathbb{S}^n}u(x,t)\ge\min\big\{\delta_0, c_0\big\}=: 1/C, \label{lower11}
\end{eqnarray} where $c_0$ is the constant given in \eqref{ep}, and $C$ (abuse of notations) is a positive constant  depending only on $f$, $p_\lambda$, $G$, $\Psi$, and $\Omega_0$ but independent of  $t\in [0, T)$.  This concludes the proof. \end{proof}

\section{Solutions to the extended Musielak-Orlicz-Gauss problem} \label{section-4} 
This section is devoted to the existence of solutions to the extended Musielak-Orlicz-Gauss problem. 
We first prove Theorem \ref{main3} in Section \ref{sec-main-3-1}, 
which provides a smooth solution to the  extended Musielak-Orlicz-Gauss problem. 
By the technique of approximation, Theorem \ref{main1} is proved in Section \ref{sec-main-1-1}, 
and this gives a solution to extended Musielak-Orlicz-Gauss problem for general data. 

\subsection{Properties of $\widetilde{V}_{G, \lambda}(\cdot)$ and $\widetilde{C}_{G,\lambda}(\cdot,\cdot)$  on $\mathcal{K}$} 
{~}
 
 Properties for $\widetilde{V}_{G, \lambda}(\cdot)$ and $\widetilde{C}_{G,\lambda}(\cdot,\cdot)$ on $\mathcal{K}_0$ when $G\in \mathcal{C}$ have been discussed in \cite{HSYZ-21}. We now discuss the case for $G: [0, \infty)\times \S^n\rightarrow [0, \infty)$ and $\Omega\in \mathcal{K}$. Recall that, for $\Omega\in \mathcal{K}$  a convex body containing the origin $o$, $N(\Omega, o)$ is  the normal cone of $\Omega$ at $o$,  namely, $$N(\Omega, o)=\big\{y \in \mathbb{R}^{n+1}: \langle \tilde{y}, y\rangle\leq 0 \ \ \mathrm{for}\ \tilde{y} \in \Omega\big\}.$$
Let $N^*(\Omega, o)$ be the dual cone of $N(\Omega, o)$ which takes the following form: 
$$N^*(\Omega, o)=\big\{\tilde{y} \in \mathbb{R}^{n+1}: \langle \tilde{y}, y\rangle\leq 0\ \ \mathrm{for} \ y\in N(\Omega, o)\big\}.$$ It can be checked that $N^*(\Omega, o)$ is the closure of the set $\{ay: y\in \Omega \ \ \mathrm{and}\ a\geq 0\}$.
 It is well-known that \begin{equation} \label{radius=0} \left\{\begin{array}{ll} r_{\Omega}(\xi)=0 & \mathrm{if}\ \ \xi\in \S^n\setminus N^*(\Omega, o),\\ r_{\Omega}(\xi) \geq 0 & \mathrm{if}\ \ \xi\in \S^n \cap N^*(\Omega, o). \end{array}\right. \end{equation}  Moreover, for each $\omega\subseteq\S^n$, up to sets of $\,d\xi$-measure $0$,  
\begin{equation}\label{dual-2-3} \alpha_{\Omega}^*(\omega\setminus N(\Omega, o))=\alpha^*_{\Omega}(\omega)\cap N^*(\Omega, o).\end{equation}  In particular, as $(\S^n\cap N(\Omega, o))\setminus N(\Omega, o)$ is empty,  up to sets of $\,d\xi$-measure $0$, \begin{equation}  \alpha^*_{\Omega}\big(\S^n\cap N(\Omega, o)\big)\subseteq \S^n\setminus N^*(\Omega, o).\label{dual-cone-relation-1} \end{equation}

Let $\lambda$ be a nonzero finite Borel measure on $\S^n$ that is absolutely continuous with respect to $\,d\xi$. Let $G:  [0, \infty)\times \S^n\rightarrow [0, \infty)$  be a continuous function such that $G(0, \xi)=0$ for all $\xi\in \S^n$. Let  $\Omega$ be a nonempty convex compact set in $\mathbb{R}^{n+1}$ containing the origin. We still define $\widetilde{V}_{G, \lambda}(\cdot)$  as the one in \eqref{VG def}, that is, 
\begin{eqnarray*}
\widetilde{V}_{G,\lambda}(\Omega)=\int_{\mathbb{S}^n}G(r_\Omega(\xi),\xi)d\lambda(\xi).
\end{eqnarray*}  If $\Omega$ is a convex compact set  containing the origin whose interior is empty, then $\Omega$ is contained in a lower-dimensional subspace of $\mathbb{R}^{n+1}$. Hence, the $\,d\xi$-measure of the set $\{\xi\in \S^n: r_{\Omega}(\xi)>0\}$  must be $0$ and  its $\lambda$-measure is $0$ as well. In this case,   \begin{eqnarray*}
\widetilde{V}_{G,\lambda}(\Omega)=\int_{\{\xi\in \S^n: r_{\Omega}(\xi)>0\}}G(r_\Omega(\xi),\xi)d\lambda(\xi)+\int_{\{\xi\in \S^n: r_{\Omega}(\xi)=0\}}G(r_\Omega(\xi),\xi)d\lambda(\xi)=0.
\end{eqnarray*} Equivalently, if $\widetilde{V}_{G,\lambda}(\Omega)>0$, then the convex compact set $\Omega$ in $\mathbb{R}^{n+1}$ containing the origin must have nonempty interior and hence $\Omega\in \mathcal{K}$.

The following result regarding $\widetilde{V}_{G, \lambda}(\cdot)$ shall be used.  Its proof follows from an argument similar to these in \cite{BF19,GHXY18}. For completeness, a short proof is included here. 
\begin{proposition}\label{Prop-V-G-1}  Let $G:  [0, \infty)\times \S^n\rightarrow [0, \infty)$  be a continuous function such that $G(0, \xi)=0$ for all $\xi\in \S^n$. Let $\lambda$ be a nonzero finite Borel measure on $\S^n$ that is absolutely continuous with respect to $\,d\xi$.   Let $\Omega_i\in \mathcal{K}$  for each $i\in \mathbb{N}$ and let $\Omega$ be a compact convex set in $\mathbb{R}^{n+1}$ containing the origin.  If $\Omega_i$ converges to $\Omega$ in the Hausdorff metric, then \begin{equation}\label{lim-V-G-5} \lim_{i\rightarrow \infty}  \widetilde{V}_{G, \lambda}(\Omega_i)=\widetilde{V}_{G, \lambda}(\Omega).\end{equation}
\end{proposition}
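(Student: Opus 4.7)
My plan is to derive \eqref{lim-V-G-5} via Lebesgue's dominated convergence theorem, applied to the integrands $G(r_{\Omega_i}(\xi), \xi)$ with respect to the measure $\lambda$. The two ingredients I will need are a uniform integrable bound and almost-everywhere pointwise convergence of the integrands.

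The first step will be the uniform domination. Since $\Omega_i \to \Omega$ in the Hausdorff metric and $\Omega$ is compact, there exists $R > 0$ such that $\Omega_i \cup \Omega$ is contained in the closed ball $B_R$ of radius $R$ centered at the origin for every sufficiently large $i$. Consequently $0 \le r_{\Omega_i}(\xi), r_\Omega(\xi) \le R$ pointwise on $\S^n$, and by continuity of $G$ on the compact set $[0, R] \times \S^n$, the constant $M := \max_{[0,R] \times \S^n} G$ dominates every $G(r_{\Omega_i}(\xi), \xi)$. Because $\lambda$ is finite, $M$ is $\lambda$-integrable.

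The second, and main, step will be to prove pointwise convergence $r_{\Omega_i}(\xi) \to r_\Omega(\xi)$ for $d\xi$-almost every $\xi \in \S^n$; the absolute continuity $\lambda \ll d\xi$ will then upgrade this to $\lambda$-a.e. convergence. I will partition $\S^n$ according to the position relative to $N^*(\Omega, o)$. On the relative interior (in $\S^n$) of $\S^n \cap N^*(\Omega, o)$, one has $r_\Omega(\xi) > 0$ and $r_\Omega$ is continuous there; Hausdorff convergence yields $r_{\Omega_i}(\xi) \to r_\Omega(\xi)$ by a standard convex-geometric argument, since any subsequential limit of $r_{\Omega_i}(\xi)\,\xi$ must lie in $\Omega$ while $r_\Omega(\xi)\,\xi$ can be approximated from within the $\Omega_i$'s. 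On the relative interior of $\S^n \setminus N^*(\Omega, o)$, \eqref{radius=0} gives $r_\Omega(\xi) = 0$; the ray from $o$ in direction $\xi$ meets $\Omega$ only at $o$, and Hausdorff convergence forces $r_{\Omega_i}(\xi) \to 0$. The remaining boundary piece is a lower-dimensional subset of $\S^n$ and hence has $d\xi$-measure zero.

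Once these two ingredients are in hand, continuity of $G$ promotes the a.e.~convergence of $r_{\Omega_i}$ to a.e.~convergence of $G(r_{\Omega_i}(\xi), \xi) \to G(r_\Omega(\xi), \xi)$, and dominated convergence delivers \eqref{lim-V-G-5}. The principal subtlety I expect to have to handle carefully is the potential discontinuity of $r_\Omega$ when $o \in \partial \Omega$ or when $\Omega$ is lower-dimensional (so that $\widetilde{V}_{G, \lambda}(\Omega) = 0$ per the preceding discussion); the remedy is precisely the observation that the discontinuity set is a $d\xi$-null subset of $\S^n$, after which $\lambda \ll d\xi$ makes it $\lambda$-null as well.
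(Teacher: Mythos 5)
Your argument is correct and follows essentially the same route as the paper: dominated convergence with the constant bound coming from a common containing ball, combined with $d\xi$-almost-everywhere convergence $r_{\Omega_i}\to r_{\Omega}$ off the $d\xi$-null set $\S^n\cap\partial N^*(\Omega,o)$, upgraded to $\lambda$-a.e.\ convergence via $\lambda\ll d\xi$. The only difference is that the paper simply cites \cite[Lemma 2.2]{BF19} and \cite[Lemma 3.2]{GHXY18} for this pointwise convergence (treating the lower-dimensional and $o\in\partial\Omega$ cases separately), whereas you sketch the convex-geometric argument directly; both are fine.
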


\begin{proof} First of all, if $\Omega\in \mathcal{K}_0$, then we can assume (without loss of generality) that $\Omega_i\in \mathcal{K}_0$ for each $i\in \mathbb{N}$. In this case, $\Omega_i\rightarrow \Omega$ in the Hausdorff metric is equivalent to $r_{\Omega_i}\rightarrow r_{\Omega}$ uniformly on $\S^n$. Hence, \eqref{lim-V-G-5} is an immediate consequence of the dominated convergence theorem. 

It has been proved in \cite[Lemma 2.2]{BF19} and \cite[Lemma 3.2]{GHXY18} that \begin{eqnarray}\label{limit-0-6-26} \lim_{i\rightarrow \infty} r_{\Omega_i}(\xi)=r_{\Omega}(\xi)=0\end{eqnarray} for $\,d\xi$-almost all $\xi\in \mathbb{S}^n$ if $\Omega$ is a compact convex set containing the origin and its interior is empty; while if $\Omega\in \mathcal{K}$ with $o\in \partial \Omega$,
\begin{eqnarray}\label{limit-1-6-26}\lim_{i\rightarrow \infty} r_{\Omega_i}(\xi)=r_{\Omega}(\xi) \ \ \mathrm{for} \ \ \xi\in \S^n\setminus \partial N^*(\Omega, o).\end{eqnarray}  Recall that  $\,d\xi$-measure  of the set $\S^n \cap\partial N^*(\Omega, o)$ is $0$. Hence both \eqref{limit-0-6-26} and \eqref{limit-1-6-26} hold for $\lambda$-almost all $\xi$. By the dominated convergence theorem, one  immediately gets that, if $\Omega\in \mathcal{K}$ with $o\in \partial \Omega$,  \begin{eqnarray*} 
 \lim_{i\rightarrow \infty}  \widetilde{V}_{G, \lambda}(\Omega_i)&=&\lim_{i\rightarrow \infty} \int_{\S^n\setminus \partial N^*(\Omega, o)}G(r_{\Omega_i}(\xi), \xi)\,d\lambda(\xi)\\&=& \int_{\S^n\setminus \partial N^*(\Omega, o)}\lim_{i\rightarrow \infty} G(r_{\Omega_i}(\xi), \xi)\,d\lambda(\xi)\\&=& \int_{\S^n\setminus \partial N^*(\Omega, o)} G(r_{\Omega}(\xi), \xi)\,d\lambda(\xi)=\widetilde{V}_{G, \lambda}(\Omega),
\end{eqnarray*}     and if $\Omega$ is a compact convex set containing the origin and its interior is empty, 
 \begin{eqnarray*} 
 \lim_{i\rightarrow \infty}  \widetilde{V}_{G, \lambda}(\Omega_i)=  \int_{\S^n}\lim_{i\rightarrow \infty} G(r_{\Omega_i}(\xi), \xi)\,d\lambda(\xi) =  \int_{\S^n} G(0, \xi)\,d\lambda(\xi)=0=\widetilde{V}_{G, \lambda}(\Omega).
\end{eqnarray*} This completes the proof. \end{proof}

 Recall Definition \ref{def-unified}  for the measure $\widetilde{C}_{\Theta}(\Omega,\cdot)$: $$
\widetilde{C}_{\Theta}(\Omega,\omega)=  \int_{{\mathrm{\alpha}_{\Omega}^*}(\omega\setminus N(\Omega, o))} \frac{r_\Omega(\xi)G_z(r_\Omega(\xi),\xi)}{\psi(u_\Omega(\alpha_\Omega(\xi)),\alpha_\Omega(\xi))}\,d\lambda(\xi),$$
 where $\Omega\in \mathcal{K}$,   $\psi=z\Psi_z$, and $\Theta=(G, \Psi, \lambda)$ is a triple containing two Musielak-Orlicz functions $G, \Psi$ defined on $[0, \infty)\times \S^n$  and  a measure $\lambda$ defined on $\S^n$.   It is  easily checked that $\widetilde{C}_{\Theta}(\Omega,\omega)=0$ when $\omega\subseteq \S^n\cap N(\Omega, o)$ (in this case, ${\mathrm{\alpha}_{\Omega}^*}(\omega\setminus N(\Omega, o))$ is an empty set). The most interesting case is when $\psi(t, \xi)=1$ for all $(t, \xi)\in [0, \infty)\times \S^n$, namely, the measure $\widetilde{C}_{G,\lambda}(\Omega,\cdot)$ defined in \eqref{GDOCM def-log}: for each Borel set $\omega\subseteq\S^n$,  \begin{eqnarray*}
\widetilde{C}_{G,\lambda}(\Omega,\omega) = \int_{{\mathrm{\alpha}_{\Omega}^*}(\omega\setminus N(\Omega, o))} r_\Omega(\xi)G_z(r_\Omega(\xi),\xi)\,d\lambda(\xi). 
\end{eqnarray*} Indeed, if $G\in \mathcal{G}_I^0$ and $\Omega\in \mathcal{K}$, then $\widetilde{C}_{G,\lambda}(\Omega,\omega)$ in \eqref{GDOCM def-log} does define a finite measure on $\S^n$. First of all, the integral \eqref{GDOCM def-log} does exists, and is nonnegative and finite for each $\Omega\in \mathcal{K}$. Moreover, if $\omega$ is an empty set, then $\widetilde{C}_{G,\lambda}(\Omega,\omega)=0$. The countable additivity is an immediate consequence of the following facts (see e.g., the proof of \cite[Proposition 5.2]{GHXY18}): for disjoint Borel sets $\omega_i\subseteq \S^n$, $i\in \mathbb{N}$, one has  $$\int_{\big(\alpha^*_{\Omega}(\omega_i\setminus N(\Omega, o)\big)\cap \big(\alpha^*_{\Omega}(\omega_j\setminus N(\Omega, o)\big)}\,d\xi=0$$ for $i\neq j$ and  \[\alpha^*_{\Omega}\bigg(\Big(\bigcup_{i=1}^{\infty} \omega_i\Big)\setminus N(\Omega, o)\bigg)=\bigcup_{i=1}^{\infty}  \alpha^*_{\Omega}(\omega_i\setminus N(\Omega, o)).\] 

We are ready to prove the following result. 

\begin{proposition}  Let  $G\in \mathcal{G}_I^0$ and $\lambda$ be a nonzero finite Borel measure on $\S^n$ which is absolutely continuous with respect to $\,d\xi$. Then the measures defined in \eqref{GDOCM def-log} is equivalent to the following formula: 
for each Borel set $\omega\subseteq\S^n$,  \begin{eqnarray}
\widetilde{C}_{G,\lambda}(\Omega,\omega) = \int_{{\mathrm{\alpha}_{\Omega}^*}(\omega)} r_\Omega(\xi)G_z(r_\Omega(\xi),\xi)\,d\lambda(\xi). \label{GDOCM def-log-g0}
\end{eqnarray}  
\end{proposition}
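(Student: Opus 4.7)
The plan is to show that replacing the integration domain $\alpha_\Omega^*(\omega\setminus N(\Omega,o))$ by the (generally larger) set $\alpha_\Omega^*(\omega)$ adds only a set on which the integrand $r_\Omega(\xi)G_z(r_\Omega(\xi),\xi)$ vanishes $\lambda$-almost everywhere, so the two integrals coincide.

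First, I would decompose
\begin{equation*}
\omega = (\omega\setminus N(\Omega,o))\,\cup\,(\omega\cap N(\Omega,o)),
\end{equation*}
which gives $\alpha_\Omega^*(\omega) = \alpha_\Omega^*(\omega\setminus N(\Omega,o)) \cup \alpha_\Omega^*(\omega\cap N(\Omega,o))$. Since the integrand $r_\Omega(\xi)G_z(r_\Omega(\xi),\xi)$ is nonnegative (because $G\in\mathcal{G}_I^0$ has $G_z>0$ on $(0,\infty)\times\S^n$ and vanishes at $z=0$), to establish the equivalence it suffices to prove
\begin{equation*}
\int_{\alpha_\Omega^*(\omega\cap N(\Omega,o))} r_\Omega(\xi)\,G_z(r_\Omega(\xi),\xi)\,d\lambda(\xi)=0.
\end{equation*}

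Next, I invoke \eqref{dual-cone-relation-1}, which asserts that $\alpha_\Omega^*(\S^n\cap N(\Omega,o))\subseteq \S^n\setminus N^*(\Omega,o)$ up to a set of $d\xi$-measure zero. Since $\lambda$ is absolutely continuous with respect to $d\xi$ by hypothesis, this inclusion also holds up to a set of $\lambda$-measure zero. Then \eqref{radius=0} tells us that $r_\Omega(\xi)=0$ for every $\xi\in\S^n\setminus N^*(\Omega,o)$. Therefore, $\lambda$-almost every $\xi\in\alpha_\Omega^*(\omega\cap N(\Omega,o))$ satisfies $r_\Omega(\xi)=0$.

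Finally, because $G\in\mathcal{G}_I^0$ satisfies the condition $zG_z(z,\xi)=0$ at $z=0$ for every $\xi\in\S^n$, the integrand $r_\Omega(\xi)G_z(r_\Omega(\xi),\xi)$ vanishes $\lambda$-almost everywhere on $\alpha_\Omega^*(\omega\cap N(\Omega,o))$, so the integral over this set equals zero. This concludes the proof. I do not anticipate any real obstacle here: the whole argument is essentially a bookkeeping exercise combining \eqref{dual-cone-relation-1}, \eqref{radius=0}, the defining properties of $\mathcal{G}_I^0$, and the absolute continuity of $\lambda$ with respect to $d\xi$; the latter is precisely what allows the $d\xi$-null statement in \eqref{dual-cone-relation-1} to be transferred to a $\lambda$-null statement.
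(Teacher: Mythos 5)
Your proposal is correct and follows essentially the same route as the paper: both arguments rest on the decomposition $\omega=(\omega\setminus N(\Omega,o))\cup(\omega\cap N(\Omega,o))$, the inclusion \eqref{dual-cone-relation-1} combined with \eqref{radius=0} to conclude $r_\Omega=0$ ($\lambda$-a.e., via absolute continuity) on the extra piece of the integration domain, and the property $zG_z(z,\xi)=0$ at $z=0$ to kill the integrand there. The paper merely organizes this as two cases plus countable additivity of the measure, while you fold it into a single null-set argument; the substance is identical.
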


 \begin{proof} Let  $\Omega\in \mathcal{K}$.  It follows from \eqref{radius=0} and \eqref{dual-cone-relation-1} that $r_{\Omega}(\xi)=0$ for $\xi\in \alpha^*_{\Omega} \big(\S^n\cap N(\Omega, o)\big)$ up to sets of $\,d\xi$-measure (hence of $\lambda$-measure) $0$.  Thus,  both \eqref{GDOCM def-log} and \eqref{GDOCM def-log-g0} give $\widetilde{C}_{G,\lambda}(\Omega,\omega)=0$ for $\omega\subseteq \S^n\cap N(\Omega, o)$. On the other hand, if $\omega\subseteq \S^n\setminus N(\Omega, o)$,  \eqref{dual-2-3} implies that, up to sets of $\lambda$-measure $0$,   $ \alpha_{\Omega}^*(\omega)\subseteq N^*(\Omega, o)$ and of course $\alpha^*_{\Omega}(\omega)=\alpha_{\Omega}^*(\omega\setminus N(\Omega, o)).$ This further implies that  both \eqref{GDOCM def-log} and \eqref{GDOCM def-log-g0} give the same value of $\widetilde{C}_{G,\lambda}(\Omega,\omega)$ for any $\omega\subseteq \S^n\setminus N(\Omega, o)$. 

Finally, for any $\omega\subseteq \S^n$, one can take $\omega_1=\omega \cap N(\Omega, o)$ and $\omega_2=\omega\setminus  N(\Omega, o)$. Hence, $\omega=\omega_1\cup \omega_2$ and $\omega_1\cap \omega_2$ is an empty set. The additivity yields that both \eqref{GDOCM def-log} and \eqref{GDOCM def-log-g0} give the same value of $\widetilde{C}_{G,\lambda}(\Omega,\omega)$ for any $\omega\subseteq \S^n$.  \end{proof}

The weak convergence for $\widetilde{C}_{G,\lambda}(\Omega,\omega)$ is summarized in our next result. Its proof follows from an argument similar to those in \cite{BF19,GHXY18}. For completeness, a short proof is included here. 
\begin{proposition}\label{prop-c-g-1} Let $G\in \mathcal{G}_I^0$ and $\lambda$ be a nonzero finite Borel measure on $\S^n$ which is absolutely continuous with respect to $\,d\xi$. Then the measure $\widetilde{C}_{G,\lambda}(\cdot, \cdot)$ is weakly convergent on $\mathcal{K}$, namely, if $\Omega_i\in \mathcal{K}$ for all $i\in \mathbb{N}$ and $\Omega_i$ converges to $\Omega\in \mathcal{K}$ in the Hausdorff metric, then $\widetilde{C}_{G,\lambda}(\Omega_i,\cdot)\rightarrow \widetilde{C}_{G,\lambda}(\Omega,\cdot)$ weakly.
\end{proposition}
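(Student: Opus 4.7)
The plan is to establish weak convergence by testing against continuous functions. For any $g \in C(\S^n)$, using the equivalent definition \eqref{GDOCM def-log-g0} together with the change of variables $x = \alpha_\Omega(\xi)$ (which parametrizes the integral by the reverse radial Gauss image), I would first rewrite
\[
\int_{\S^n} g(x)\,d\widetilde{C}_{G,\lambda}(\Omega,x) = \int_{\S^n} g(\alpha_\Omega(\xi))\, r_\Omega(\xi)\, G_z(r_\Omega(\xi),\xi)\,d\lambda(\xi),
\]
with the convention that $r_\Omega(\xi) G_z(r_\Omega(\xi),\xi) = 0$ whenever $r_\Omega(\xi) = 0$, which is consistent because $G \in \mathcal{G}_I^0$ guarantees that $z G_z(z,\xi)$ extends continuously by $0$ to $z=0$. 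The identical formula holds for each $\Omega_i$.

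Next I would reduce weak convergence to applying the dominated convergence theorem to the right-hand side. Three ingredients are required. First, $r_{\Omega_i}(\xi) \to r_\Omega(\xi)$ for $\lambda$-a.e.\ $\xi$: this is exactly the content extracted from \cite[Lemma 2.2]{BF19} and \cite[Lemma 3.2]{GHXY18} that was already invoked in the proof of Proposition \ref{Prop-V-G-1}, since $\lambda \ll d\xi$ and $\S^n \cap \partial N^*(\Omega,o)$ has $d\xi$-measure zero. Second, $\alpha_{\Omega_i}(\xi) \to \alpha_\Omega(\xi)$ for $\lambda$-a.e.\ $\xi$: the set of $\xi$ for which $\nu_\Omega$ fails to be single-valued at $r_\Omega(\xi)\xi$ has $d\xi$-measure zero by Aleksandrov's theorem, and on its complement Hausdorff convergence of convex bodies forces convergence of the Gauss images. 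Third, a uniform majorant: Hausdorff convergence yields $R>0$ with $\Omega_i \subseteq B_R$ for all $i$, so $r_{\Omega_i}(\xi) \leq R$ and continuity of $z G_z$ on the compact set $[0,R]\times \S^n$ gives
\[
\bigl|\, g(\alpha_{\Omega_i}(\xi))\, r_{\Omega_i}(\xi)\, G_z(r_{\Omega_i}(\xi),\xi)\,\bigr| \leq \|g\|_\infty \cdot \max_{(z,\xi)\in [0,R]\times \S^n} z G_z(z,\xi),
\]
which is integrable against the finite measure $\lambda$. Continuity of $g$ and of $zG_z$ combined with the a.e.\ pointwise convergences above then yields pointwise convergence of the integrands, and dominated convergence delivers
\[
\int_{\S^n} g\,d\widetilde{C}_{G,\lambda}(\Omega_i,\cdot) \longrightarrow \int_{\S^n} g\,d\widetilde{C}_{G,\lambda}(\Omega,\cdot),
\]
which is the desired weak convergence.

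The main obstacle is justifying the a.e.\ convergence $\alpha_{\Omega_i} \to \alpha_\Omega$ when $o \in \partial \Omega$, since the radial Gauss image can behave erratically along $\S^n \cap \partial N^*(\Omega,o)$; this is circumvented by removing that zero-measure exceptional set and working where $r_\Omega$ is continuous and $\nu_\Omega$ is single-valued. Everything else reduces to the continuity properties of $zG_z$ built into the definition of $\mathcal{G}_I^0$ and to the absolute continuity of $\lambda$ with respect to $d\xi$.
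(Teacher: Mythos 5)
Your proposal is correct and follows essentially the same route as the paper: rewrite $\int g\,d\widetilde{C}_{G,\lambda}(\Omega,\cdot)$ as $\int_{\S^n} g(\alpha_\Omega(\xi))\,r_\Omega(\xi)G_z(r_\Omega(\xi),\xi)\,d\lambda(\xi)$, establish $d\xi$-a.e.\ convergence of the integrand, and conclude by dominated convergence using the uniform bound on $zG_z$ over $[0,R]\times\S^n$. The only difference is that the paper cites \cite[Proposition 5.2]{GHXY18} and \cite[Proposition 6.2]{GHWXY18} for the a.e.\ convergence $I_{\Omega_i}\to I_\Omega$, whereas you unpack that step into the a.e.\ convergence of $r_{\Omega_i}$ and of $\alpha_{\Omega_i}$ (handling the set where $r_\Omega=0$ via the vanishing of $zG_z$ at $z=0$), which is a legitimate direct justification of the same fact.
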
 
\begin{proof}  First of all, we can prove that, for any bounded Borel function $g:\S^n\to \mathbb{R}$, 
\begin{equation}\label{variable-change-111} 
\int_{\S^n} g(u)\, d\widetilde{C}_{G,\lambda}(\Omega, u) = \int_{\S^n\cap N^*(\Omega, o)} I_{\Omega}(\xi)\,d\lambda(\xi)= \int_{\S^n} I_{\Omega}(\xi)\,d\lambda(\xi),  \end{equation} where  $I_{\Omega}(\xi)=g(\alpha_{\Omega}(\xi)) r_{\Omega}(\xi) G_z(r_{\Omega}(\xi), \xi). $  Indeed, this formula has been proved in \cite{HSYZ-21} for $\Omega\in \mathcal{K}_0$. When $\Omega\in \mathcal{K}$, it is well known that the $\,d\xi$-measure (and hence $\lambda$-measure) of the set $\S^n\cap \partial N^*(\Omega, o)$ is $0$. Let $\mathrm{int}N^*(\Omega, o)$ be the interior of $N^*(\Omega, o)$. If $g=\mathbf{1}_{\omega}$ be the characteristic function of Borel set $\omega\subseteq \S^n$, then \eqref{GDOCM def-log-g0} implies  \begin{equation} \int_{\S^n}  \mathbf{1}_{\omega}(u)\, d\widetilde{C}_{G,\lambda}(\Omega, u)= 
 \widetilde{C}_{G,\lambda}(\Omega, \omega)  =  \int_{{\mathrm{\alpha}_{\Omega}^*}(\omega\setminus N(\Omega, o))} r_\Omega(\xi)G_z(r_\Omega(\xi),\xi)\,d\lambda(\xi). \label{special-form--1}  \end{equation}  
 Recall that, for $\omega\subseteq \S^n$ and for $\,d\xi$-almost all $\xi\in \S^n\cap N^*(\Omega, o)$,  $\xi\in \alpha_{\Omega}^*(\omega)$ if and only if $\alpha_{\Omega}(\xi)\in \omega.$ Together with \eqref{dual-2-3} and \eqref{special-form--1},  one further gets  \begin{eqnarray*} \int_{\S^n}  \mathbf{1}_{\omega}(u)\, d\widetilde{C}_{G,\lambda}(\Omega, u)  &=& \int_{\alpha^*_{\Omega}(\omega)\cap N^*(\Omega, o)}  r_\Omega(\xi)G_z(r_\Omega(\xi),\xi)\,d\lambda(\xi)\\ &=&  \int_{\S^n \cap N^*(\Omega, o)} \mathbf{1}_{\omega}\big(\alpha_{\Omega}(\xi) \big) r_\Omega(\xi)G_z(r_\Omega(\xi),\xi)\,d\lambda(\xi)\\ &=&  \int_{\S^n} \mathbf{1}_{\omega}\big(\alpha_{\Omega}(\xi) \big) r_\Omega(\xi)G_z(r_\Omega(\xi),\xi)\,d\lambda(\xi). \end{eqnarray*}  
This claims that  \eqref{variable-change-111} holds for $g=\mathbf{1}_{\omega}$ (in view of \eqref{radius=0}). Hence, \eqref{variable-change-111} follows by a standard argument based on approaching a bounded Borel function $g$ by a sequence of bounded simple functions. 

Let $G\in \mathcal{G}_I^0$, $\Omega_i\in \mathcal{K}$ converge to $\Omega\in \mathcal{K}$ in the Hausdorff metric, and  $g:\S^n\rightarrow \mathbb{R}$ be a continuous function. It holds that $I_{\Omega_i}(\xi)\rightarrow I_{\Omega}(\xi)$ for $\,d\xi$-almost all $\xi\in \S^n$ (see \cite[Proposition 5.2]{GHXY18} when $\Omega\in \mathcal{K}$ and \cite[Proposition 6.2]{GHWXY18} when $\Omega\in \mathcal{K}_0$). Moreover, $$\sup\big\{I_{\Omega_i}(\xi): \ \xi\in \S^n \ \ \mathrm{and}\ i\in \mathbb{N} \big\}<\infty.$$ These, together with the dominated convergence theorem, immediately show that 
\begin{equation*} 
 \lim_{i\rightarrow \infty}  \int_{\S^n} I_{\Omega_i}(\xi)\,d\lambda(\xi)= \int_{\S^n} I_{\Omega}(\xi)\,d\lambda(\xi). \end{equation*} In view of \eqref{variable-change-111}, one gets 
 \begin{equation*} 
\lim_{i\rightarrow \infty} \int_{\S^n} g(u)\, d\widetilde{C}_{G,\lambda}(\Omega_i, u)   =\int_{\S^n} g(u)\, d\widetilde{C}_{G,\lambda}(\Omega, u) \end{equation*} holds for all continuous functions on $\S^n$. Hence,  $\widetilde{C}_{G,\lambda}(\Omega_i,\cdot)\rightarrow \widetilde{C}_{G,\lambda}(\Omega,\cdot)$ weakly. \end{proof} 

\subsection{Proof of Theorem \ref{main3}} \label{sec-main-3-1} 
{~}

Theorem \ref{main3} is now proved with the help of the $C^2$-estimates established in Lemmas \ref{lowerboundsigma_n} and \ref{boundsprincipleradii} in Section \ref{section-5}.  

\begin{proof} [Proof of Part $(\mathrm{ii})$ in Theorem \ref{main3}.] Following the notations in Section \ref{section-3}, for  $\Omega_0\in\mathcal{K}_0$, let 
$\mathcal{M}_0=\partial \Omega_0$ be a closed, smooth and uniformly convex hypersuface. The support function of $\Omega_0$ is $u_0$.   Denote by $r_{\Omega_0}$ the radial function of $\Omega_0$ and assume \eqref{ep}, namely $r_{\Omega_0}(\xi)\ge c_0\ge10\varepsilon$ for some constant $c_0>0$ and for small $\varepsilon \in (0, \delta)$ with $\delta\in (0, 1)$. Let $X_\varepsilon(\cdot,t)$ be the solution to \eqref{SF2}, $u_\varepsilon(\cdot,t)$ be the support function of $\Omega_t^\varepsilon$ and  $\mathcal{M}_t^\varepsilon=\partial \Omega_t^\varepsilon$. Let $T$ be the maximal time such that $u_\varepsilon(\cdot,t)$ is positive, smooth and uniformly convex for all $t\in[0,T)$.  

First of all, under the conditions on $f$, $p_\lambda$, $G$,  $\Psi$ and $\psi$ stated in  Part $(\mathrm{ii})$ of Theorem \ref{main3}, one can find a constant $\overline{C}_\varepsilon>0$ (depending on $\varepsilon$ but independent of $t$), such that, the principal curvature radii of $\mathcal{M}^{\varepsilon}_t$ are bounded from above and below. That is, for all $t\in [0, T)$, one has, 
\begin{eqnarray}\label{sigma lower bonud}
\overline{C}_\varepsilon^{-1} I\le\nabla^2u_{\varepsilon}(\cdot, t)+u_{\varepsilon}(\cdot, t)I\le \overline{C}_\varepsilon I.
\end{eqnarray} In fact, conditions \eqref{cond1}, \eqref{cond2} and \eqref{cond3} follow from Lemmas \ref{upperbound}-\ref{lowerbound}.   Hence,  \eqref{sigma lower bonud} is a direct consequence of Lemmas \ref{lowerboundsigma_n} and \ref{boundsprincipleradii}, where $u(\cdot, t)$, $\eta(t)$ and $\Phi(x,u,\nabla u)$ in Lemmas \ref{lowerboundsigma_n} and \ref{boundsprincipleradii} are replaced by $u_{\varepsilon} =u_{\varepsilon}(\cdot, t)$,  $\eta_\varepsilon(t)$, and respectively,    
\begin{eqnarray*} \Phi(x,u_{\varepsilon},\nabla u_{\varepsilon})&=&f(x)\widehat{\psi}_\varepsilon(u_{\varepsilon},x)(u_{\varepsilon}^2+|\nabla u_{\varepsilon}|^2)^\frac{n}{2} p^{-1}_\lambda\bigg(\frac{u_{\varepsilon}x+\nabla u_{\varepsilon}}{\sqrt{u_{\varepsilon}^2+|\nabla u_{\varepsilon}|^2}}\bigg)\\ &&\times G_z^{-1}\bigg(\sqrt{u_{\varepsilon}^2+|\nabla u_{\varepsilon}|^2},\frac{u_{\varepsilon} x+\nabla u_{\varepsilon}}{\sqrt{u_{\varepsilon}^2+|\nabla u_{\varepsilon}|^2}}\bigg).\end{eqnarray*}

In view of \eqref{sigma lower bonud} and Lemmas \ref{upperbound}-\ref{lowerbound}, the flow \eqref{uSF2} is uniformly parabolic and then $|\partial_tu_\varepsilon|_{L^\infty(\mathbb{S}^n\times[0,T))}\le \overline{C}_\varepsilon$ (abuse of notation $\overline{C}_\varepsilon$). It follows from the results of Krylov and Safonov\cite{KS81} that the  H\"{o}lder continuity estimates for $\nabla^2u_{\varepsilon}$ and $\partial_tu_{\varepsilon}$  can be obtained. Hence, the standard theory of the linear uniformly parabolic equations can be used to get the higher order derivatives estimates, which further implies the the long time existence of a positive, smooth and uniformly convex solution to the flow \eqref{uSF2}. Moreover, 
\begin{eqnarray}
\label{est1}&&1/C_\varepsilon\le u_\varepsilon(x,t)\le C\ \ \ \mathrm{for\ all}\ (x,t)\in\mathbb{S}^n\times[0,\infty),\\
\label{est2}&&1/C\le|\eta_\varepsilon(t)|\le C\ \ \ \mathrm{for\ all}\ t\in[0,\infty),\\
\label{est3}&&\nabla^2u_\varepsilon+u_\varepsilon I\ge1/C_\varepsilon\ \ \ \mathrm{on}\ \,\mathbb{S}^n\times[0,\infty),\\
\label{est4}&&|u_\varepsilon|_{C_{x,t}^{k,l}(\mathbb{S}^n\times[0,\infty))}\le C_{\varepsilon,k,l},
\end{eqnarray}
where $C_\varepsilon$ and $ C_{\varepsilon,k,l}$ are constants depending on $\varepsilon, G, \Psi, f, p_\lambda$ and  $\Omega_0$. Together with Lemma \ref{monotone1},  one has $0\leq \mathcal{J}_\varepsilon(u_{\varepsilon}(\cdot,t)) \le \mathcal{J}_\varepsilon(u_{0})$ for all $t\in[0,\infty)$, and hence   \begin{eqnarray*}  \int_{0}^{\infty} \Big|\frac{d}{dt}\mathcal{J}_\varepsilon(u_{\varepsilon}(\cdot,s))\Big| \,ds =\mathcal{J}_\varepsilon(u_{0})-
\lim_{t\to\infty}\mathcal{J}_\varepsilon(u_{\varepsilon}(\cdot,t))\leq \mathcal{J}_\varepsilon(u_{0}).
\end{eqnarray*} This further yields the existence of a sequence $t_i\to\infty$ such that
\[\frac{d}{dt}\mathcal{J}_\varepsilon(u_{\varepsilon}(\cdot,t_i))\to 0\ \ \ \mathrm{as}\,\,t_i\to\infty.\]

By virtue of \eqref{est1}-\eqref{est4} and Lemma \ref{monotone1}, $\{u_\varepsilon(\cdot, t_i)\}_{i\in \mathbb{N}}$ is uniformly bounded and then a subsequence of $\{u_\varepsilon(\cdot, t_i)\}_{i\in \mathbb{N}}$ (which will be still denoted by $\{u_\varepsilon(\cdot, t_i)\}_{i\in \mathbb{N}}$)  converges to a positive and uniformly convex function $u_{\varepsilon,\infty}\in C^\infty(\mathbb{S}^n)$ satisfying  that  \begin{equation}  {u_{\varepsilon,\infty}}(x) r_{\varepsilon,\infty}^{-n}(\xi) G_z\big(r_{\varepsilon,\infty}(\xi),\xi\big)p_\lambda(\xi) \det\!\big(\nabla^2u_{\varepsilon,\infty}(x)\!+\! u_{\varepsilon,\infty}(x)I\big)\!=\!\gamma_{\varepsilon}f(x)\widehat{\psi}_\varepsilon(u_{\varepsilon,\infty},x), \label{equation-infty-1}\end{equation} 
where $r_{\varepsilon,\infty}(\xi)=(u_{\varepsilon, \infty}^2(x)+|\nabla u_{\varepsilon,\infty}(x)|^2)^{1/2}$ and   $\gamma_\varepsilon$ is a constant given by
\[\gamma_\varepsilon=\lim_{t_i\to\infty}\frac{1}{\eta_\varepsilon(t_i)}=\frac{\int_{\S^n}r_{\varepsilon,\infty}(\xi)G_z\big(r_{\varepsilon,\infty}(\xi),\xi\big)p_\lambda(\xi)\,d\xi}{\int_{\S^n}f(x)\widehat{\psi}_\varepsilon(u_{\varepsilon,\infty},x)dx}.\] Clearly, $u_{\varepsilon,\infty}\geq 1/C_{\varepsilon}$ on $\S^n$ and then  $\Omega_{\varepsilon,\infty}\in\mathcal{K}_0$ (where $\Omega_{\varepsilon,\infty}$ is the convex body determined by $u_{\varepsilon,\infty}$). It follows from \eqref{Jac-1-1} and \eqref{equation-infty-1} that, for any Borel set $\omega\subseteq \mathbb{S}^n$,  \begin{eqnarray}\label{varepsilon eq}
\int_{\alpha_{\Omega_{\varepsilon,\infty}}^*(\omega)}r_{\varepsilon,\infty}(\xi)  G_z\big(r_{\varepsilon,\infty}(\xi) ,\xi\big)p_\lambda(\xi)\,d\xi=\gamma_\varepsilon\int_{\omega}f(x) \widehat{\psi}_\varepsilon(u_{\varepsilon,\infty},x)\,dx.
\end{eqnarray} Note that $\,d\mu(x)=f(x)\,dx$ and $\,d\lambda(\xi)=p_{\lambda}(\xi)\,d\xi$. Formula \eqref{GDOCM def-log} yields that \eqref{varepsilon eq} is equivalent to \eqref{GDOMP1} for $\Omega_{\varepsilon,\infty}$, namely,  \begin{eqnarray}\label{varepsilon eq-measure}
\gamma_{\varepsilon} \psi(u_{\varepsilon,\infty}, \cdot)\,d\mu =  \,d\widetilde{C}_{G,\lambda}(\Omega_{\varepsilon,\infty},\cdot). \end{eqnarray}

Recall that for $G\in \mathcal{G}_I^0$ and $\Omega\in \mathcal{K}$, one has \begin{eqnarray*} 
\widetilde{V}_{G,\lambda}(\Omega)=\int_{\mathbb{S}^n}G(r_\Omega(\xi),\xi)d\lambda(\xi).
\end{eqnarray*}  Lemma \ref{monotone1} then implies that $\widetilde{V}_{G,\lambda}(\Omega_0)=\widetilde{V}_{G,\lambda}(\Omega_t^{\varepsilon})$, and hence $$\widetilde{V}_{G,\lambda}(\Omega_0)=\lim_{i\rightarrow \infty} \widetilde{V}_{G,\lambda}(\Omega_{t_i}^{\varepsilon})=\widetilde{V}_{G,\lambda}(\Omega_{\varepsilon,\infty}).$$ That is, $\Omega_{\varepsilon,\infty}\in \mathcal{K}_V$ with  
\begin{eqnarray*}
\mathcal{K}_V=\Big\{\mathbb{K}\in\mathcal{K}: \widetilde{V}_{G,\lambda}(\mathbb{K})=\widetilde{V}_{G,\lambda}(\Omega_0)\Big\}.
\end{eqnarray*}  Together with \eqref{functional} and   Lemma \ref{monotone1}, one sees that $\Omega_{\varepsilon,\infty}$ solves the following optimization problem:  \[\inf\Big\{\int_{\S^n}f(x)\widehat{\Psi}_\varepsilon(u_\mathbb{K}(x), x)\,dx: \mathbb{K}\in\mathcal{K}_V\Big\}.\]  
    
It follows from Lemmas \ref{etaepsbound} and \ref{widthbound} that $\frac{1}{C}\le w_{\Omega_{\varepsilon,\infty}}^-\le w_{\Omega_{\varepsilon,\infty}}^+\le C$ and  $\frac{1}{C}\le\gamma_\varepsilon\le C$, respectively, where $C$ is independent of $\varepsilon$.  Hence, a constant $\gamma_0>0$ and  a sequence $\varepsilon_i\to 0$ can be found so that  $\gamma_{\varepsilon_i}\to\gamma_0$. Due to Proposition \ref{Prop-V-G-1} and the remark immedately before it, one can even assume that $\Omega_{\varepsilon_i,\infty}$ converges to a $\Omega_\infty\in\mathcal{K}_V$ in the Hausdorff metric. Note that, by Proposition \ref{prop-c-g-1},  $ \widetilde{C}_{G, \lambda}(\Omega_{\varepsilon_i,\infty})\rightharpoonup  \widetilde{C}_{G, \lambda}(\Omega_{\infty})$ weakly. It follows from \eqref{varepsilon eq}, \eqref{varepsilon eq-measure}, the fact that $\widehat{\psi}_{\varepsilon}(s,x)$ is bounded on $[0, C]\times \S^n$ and $\varepsilon\in (0, \delta)$, and the dominated convergence theorem that, for each Borel set $\omega\subseteq \S^n$,  
\begin{equation}\gamma_0 \int_{\omega} \psi(u_{\infty},x)\,d\mu(x)= 
\int_{\alpha_{\Omega_{\infty}}^*(\omega)}r_{\infty}(\xi) G_z(r_{\infty}(\xi),\xi)p_\lambda(\xi)\,d\xi = \int_{\omega}\, \,d\widetilde{C}_{G,\lambda}(\Omega_{\infty},\xi). \label{equ-solved-1} \end{equation}  Moreover, $\Omega_\infty$ satisfies
\begin{eqnarray}\label{opti-123}
\int_{\S^n}f\Psi(u_{\Omega_{\infty}},x)dx=\inf\Big\{\int_{\S^n}f(x)\Psi(u_\mathbb{K},x)dx: \mathbb{K}\in\mathcal{K}_V\Big\}.
\end{eqnarray}
Clearly, equation \eqref{equ-solved-1} can be reformulated as $$ \psi(u_{\infty},x)\,d\mu(x)=  \tau \,d\widetilde{C}_{G,\lambda}(\Omega_{\infty},\xi) \ \ \mathrm{with}\ \  \tau=\frac{1}{\gamma_0}=  \frac{ \int_{\S^n} \psi(u_{\infty},x)\,d\mu(x)} {\int_{\S^n} \,d\widetilde{C}_{G,\lambda}(\Omega_{\infty},\xi)}. $$
 Thus, $\Omega_{\infty}$ satisfies \eqref{GDOMP1}, and this concludes the proof of Part $(\mathrm{ii})$ in Theorem \ref{main3}. \end{proof}

\begin{proof}[Proof of Part $(\mathrm{i})$ in Theorem \ref{main3}.] In this case, we assume that \eqref{good-condition-1}  holds, i.e., 
\[\liminf_{s\to0^+}\frac{sG_z(s,x)}{\psi(s,x)}=\infty \ \ \mathrm{for\ all}\ x\in\S^n.\]   
It follows from \eqref{upper11}-\eqref{etabound1} and \eqref{lower11} that \begin{eqnarray}\label{C2esti1}
C^{-1}I\le\nabla^2u+uI\le CI. 
\end{eqnarray} This is  a direct consequence of Lemmas \ref{lowerboundsigma_n} and \ref{boundsprincipleradii} by letting  
\begin{eqnarray*} \Phi(x,u,\nabla u)\!=\!f(x)\psi(u,x)(u^2\!\!+\!|\nabla u|^2)^\frac{n}{2} p^{-1}_\lambda\bigg(\!\!\frac{ux \!+\!\nabla u}{\sqrt{u^2\!\!+\!|\nabla u|^2}}\!\bigg)  G_z^{-1}\!\bigg(\!\! \sqrt{u^2\!\!+\!|\nabla u|^2},\frac{ux \!+\!\nabla u}{\sqrt{u^2\!\!+\!|\nabla u|^2}}\!\bigg).\end{eqnarray*}  
Following along the same lines as those in the proof of Part $(\mathrm{ii})$ in Theorem \ref{main3}, $u(\cdot,t)$ remains a positive, smooth and uniformly convex solution of the flow \eqref{uSSF} for all time $t>0$, due to the above priori estimates \eqref{upper11}-\eqref{etabound1}, \eqref{lower11} and \eqref{C2esti1}.  By Lemma \ref{mono},  
\begin{eqnarray*}
\frac{d}{dt}\mathcal{J}(u(\cdot,t))\le0\ \ \ \mathrm{for\ all} \  t\in[0, \infty).
\end{eqnarray*}
Hence, a sequence $t_i\to\infty$ can be found so that 
\[\frac{d}{dt}\mathcal{J} (u(\cdot,t_i))\to 0\ \ \ \mathrm{as}\,\,t_i\to\infty,\] and  $u(\cdot,t_i)$ converges smoothly to a positive, smooth and uniformly convex function $u_\infty$ solving \eqref{elliptic}, where 
\[\gamma=\displaystyle\lim_{t_i\to\infty}\frac{1}{\eta(t_i)}=\frac{\int_{\S^n}r_{\infty}(\xi)G_z\big(r_{\infty}(\xi),\xi\big)p_\lambda(\xi)\,d\xi}{\int_{\S^n}f(x){\psi}(u_{\infty},x)dx}\] with $r_{\infty}=r_{\Omega_{\infty}}$  the radial function of the convex body $\Omega_{\infty}\in \mathcal{K}_0$ whose support function is $u_{\infty}$. In particular, $\Omega_{\infty}$ satisfies  \eqref{GDOMP} with $\tau=1/\gamma$ and \eqref{opti-123}.
\end{proof}

\subsection{Proof of Theorem \ref{main1} } \label{sec-main-1-1}{~}

Using the  standard approximations for the functions $G$, $p_\lambda$ and $\Psi$ in Theorem \ref{main1}, the following result can be obtained by the proof of Theorem \ref{main3} directly. 
\begin{cor}\label{Part ii}
Let $G$, $p_\lambda$ and $\Psi$ be as in Theorem \ref{main1} and $f$ be a smooth positive function on $\S^n$. There exist $\gamma>0$ and $\Omega\in\mathcal{K}_V$ such that  
\begin{align*} \int_{\alpha_\Omega^*(\omega)}rG_z(r,\xi)p_\lambda(\xi)d\xi &=\gamma\int_{\omega}f\psi(u,x)dx,\ \ \mathrm{for\ all\ Borel\ set}\  \  \omega\subseteq \S^n, \\ \int_{\S^n}f\Psi(u_\Omega,x)\,dx &=\inf\Big\{\int_{\S^n}f\Psi(u_\mathbb{K},x)dx: \mathbb{K}\in\mathcal{K}_V\Big\}.\end{align*}
\end{cor}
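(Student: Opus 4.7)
The plan is to deduce the corollary from Theorem \ref{main3} by a smoothing argument in the data $(G,\Psi,p_\lambda)$. First, I would mollify to produce sequences $G^{(k)},\Psi^{(k)}\in\mathcal{G}_I^0$ of smooth functions and smooth strictly positive functions $p_\lambda^{(k)}$ on $\S^n$ such that $G^{(k)}\to G$, $\Psi^{(k)}\to\Psi$ locally uniformly on $[0,\infty)\times\S^n$, $G_z^{(k)}\to G_z$ and $\Psi_z^{(k)}\to\Psi_z$ locally uniformly on $(0,\infty)\times\S^n$, and $p_\lambda^{(k)}\to p_\lambda$ uniformly on $\S^n$; the boundary conditions defining $\mathcal{G}_I^0$ and the growth condition \eqref{Psi-comp-condition} are preserved by combining a convolution in $z$ with a cutoff and a small lower bound term. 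Applying Theorem \ref{main3}$(\mathrm{ii})$ to $(G^{(k)},\Psi^{(k)},f,p_\lambda^{(k)})$ from a fixed initial body $\Omega_0\in\mathcal{K}_0$ gives $\Omega^{(k)}\in\mathcal{K}$ with support function $u^{(k)}$, radial function $r^{(k)}$, and $\gamma_k>0$ satisfying
\[
\int_{\alpha_{\Omega^{(k)}}^*(\omega)} r^{(k)}G_z^{(k)}(r^{(k)},\xi)p_\lambda^{(k)}(\xi)\,d\xi=\gamma_k\int_\omega f\,\psi^{(k)}(u^{(k)},x)\,dx
\]
for every Borel $\omega\subseteq\S^n$, together with $\widetilde V_{G^{(k)},\lambda^{(k)}}(\Omega^{(k)})=\widetilde V_{G^{(k)},\lambda^{(k)}}(\Omega_0)$ and the minimality of $\Omega^{(k)}$ for $\int f\Psi^{(k)}(u_{\mathbb{K}},x)\,dx$ over $\mathcal{K}_{V,k}:=\{\mathbb{K}\in\mathcal{K}:\widetilde V_{G^{(k)},\lambda^{(k)}}(\mathbb{K})=\widetilde V_{G^{(k)},\lambda^{(k)}}(\Omega_0)\}$.

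Next I would extract a convergent subsequence. The bounds from Lemmas \ref{upperbound}, \ref{etaepsbound}, and \ref{widthbound} depend only on the extrema of $f$, $p_\lambda^{(k)}$, $G_z^{(k)}$, $\psi^{(k)}$ on a fixed compact slab together with $\Omega_0$, hence can be made independent of $k$ by taking the approximations in a fixed neighborhood of the originals. This yields $w^+_{\Omega^{(k)}}\le C$, $w^-_{\Omega^{(k)}}\ge 1/C$, and $1/C\le\gamma_k\le C$ uniformly in $k$. Blaschke's selection theorem extracts a subsequence $\Omega^{(k_j)}\to\Omega$ in the Hausdorff metric with $w^-_\Omega\ge 1/C>0$, so $\Omega\in\mathcal{K}_0$, and $\gamma_{k_j}\to\gamma\in[1/C,C]$. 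Hausdorff convergence gives uniform convergence $u^{(k_j)}\to u_\Omega$; combined with the uniform convergence of $\psi^{(k)}$ on $[0,C]\times\S^n$ and of $p_\lambda^{(k)}$ on $\S^n$, the right-hand side of the identity converges to $\gamma\int_\omega f\psi(u_\Omega,x)\,dx$. For the left-hand side, Proposition \ref{prop-c-g-1} together with uniform convergence of $G_z^{(k)}$ and $p_\lambda^{(k)}$ delivers the weak convergence $\widetilde C_{G^{(k)},\lambda^{(k)}}(\Omega^{(k_j)},\cdot)\rightharpoonup\widetilde C_{G,\lambda}(\Omega,\cdot)$; similarly Proposition \ref{Prop-V-G-1} yields $\widetilde V_{G,\lambda}(\Omega)=\widetilde V_{G,\lambda}(\Omega_0)$, so $\Omega\in\mathcal{K}_V$.

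Finally, for the variational identity I would fix $\mathbb{K}\in\mathcal{K}_V$ and use a scaling adjustment to bring it into $\mathcal{K}_{V,k}$. Since $c\mapsto\widetilde V_{G^{(k)},\lambda^{(k)}}(c\mathbb{K})$ is continuous and strictly increasing, there is a unique $c_k>0$ with $\widetilde V_{G^{(k)},\lambda^{(k)}}(c_k\mathbb{K})=\widetilde V_{G^{(k)},\lambda^{(k)}}(\Omega_0)$; the uniform convergence of the data combined with $\widetilde V_{G,\lambda}(\mathbb{K})=\widetilde V_{G,\lambda}(\Omega_0)$ forces $c_k\to 1$. Applying the minimality of $\Omega^{(k)}$ to the competitor $c_k\mathbb{K}\in\mathcal{K}_{V,k}$ and letting $k\to\infty$ via uniform convergence of $\Psi^{(k)}$ on a fixed compact set gives $\int f\Psi(u_\Omega,x)\,dx\le\int f\Psi(u_\mathbb{K},x)\,dx$, as required. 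The main obstacle is verifying that the a priori constants in Lemmas \ref{upperbound}--\ref{widthbound} can genuinely be made uniform in $k$; this requires mollifications that preserve not just pointwise values but also the structural conditions defining $\mathcal{G}_I^0$ (in particular the vanishing of $G(0,\xi)$ and $zG_z(z,\xi)|_{z=0}$) and the compactness property \eqref{Psi-comp-condition}, so that the proofs of those lemmas apply with a single set of constants throughout the sequence.
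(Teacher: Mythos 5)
Your proposal is correct and follows the same route the paper intends: the paper's entire argument for this corollary is the one-line remark that ``standard approximations'' of $G$, $p_\lambda$ and $\Psi$ combined with the proof of Theorem \ref{main3} yield the result, and you have simply written out that approximation, the uniform-in-$k$ a priori bounds, and the passage to the limit via Propositions \ref{Prop-V-G-1} and \ref{prop-c-g-1}. One harmless slip: the uniform lower width bound only gives $\Omega\in\mathcal{K}$ (nonempty interior), not $\Omega\in\mathcal{K}_0$ (the origin could lie on $\partial\Omega$), but the corollary only asserts $\Omega\in\mathcal{K}_V\subseteq\mathcal{K}$ and nothing downstream in your argument uses interiority of the origin.
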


The proof of the following result can be found in e.g., \cite[Lemma 3.7]{CL19}.  
\begin{lem}\label{mu}
Let $\mu$ be a non-zero finite Borel measure on $\mathbb{S}^n$. There is a sequence of positive smooth functions $f_j$ defined on $\mathbb{S}^n,$ such that $\,d\mu_j=f_j\,d\xi$  converges to $\mu$ weakly.
\end{lem}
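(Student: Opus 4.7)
The plan is to construct $f_j$ by mollifying $\mu$ against a smooth, positive, approximate-identity kernel on $\mathbb{S}^n$ and then adding a small positive constant to guarantee strict positivity everywhere. I will then check weak convergence by a standard Fubini argument.

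First I would fix a smooth, nonnegative, symmetric kernel $\Phi\in C^\infty(\mathbb{R})$ supported in a small neighborhood of $1$ with $\Phi(1)>0$, and for each $j\in\mathbb{N}$ set
\begin{equation*}
K_j(x,y)=c_j\,\Phi\bigl(j(\langle x,y\rangle-1)\bigr),\qquad x,y\in\mathbb{S}^n,
\end{equation*}
where $c_j>0$ is chosen so that $\int_{\mathbb{S}^n}K_j(x,y)\,d\xi(y)=1$ for every $x\in\mathbb{S}^n$ (by rotational symmetry this integral is independent of $x$). Then $K_j(\cdot,\cdot)\in C^\infty(\mathbb{S}^n\times\mathbb{S}^n)$, $K_j\geq 0$, and the family $\{K_j(x,\cdot)\}$ is an approximate identity concentrating at $x$ as $j\to\infty$. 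Define
\begin{equation*}
\tilde f_j(x)=\int_{\mathbb{S}^n} K_j(x,y)\,d\mu(y),\qquad f_j(x)=\tilde f_j(x)+\tfrac{1}{j}.
\end{equation*}
Differentiation under the integral sign is legitimate because $K_j$ is smooth on the compact set $\mathbb{S}^n\times\mathbb{S}^n$, so $f_j\in C^\infty(\mathbb{S}^n)$; and the added constant $1/j$ makes $f_j$ strictly positive on $\mathbb{S}^n$.

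Next I would verify the weak convergence $d\mu_j=f_j\,d\xi\rightharpoonup d\mu$. Let $g\in C(\mathbb{S}^n)$. By Fubini's theorem,
\begin{equation*}
\int_{\mathbb{S}^n} g(x)\,f_j(x)\,d\xi(x)=\int_{\mathbb{S}^n}\!\!\left(\int_{\mathbb{S}^n}g(x)K_j(x,y)\,d\xi(x)\right)d\mu(y)+\tfrac{1}{j}\int_{\mathbb{S}^n}g(x)\,d\xi(x).
\end{equation*}
Set $G_j(y):=\int_{\mathbb{S}^n}g(x)K_j(x,y)\,d\xi(x)$. Since $g$ is uniformly continuous on $\mathbb{S}^n$ and $K_j(\cdot,y)$ is a nonnegative kernel with unit mass concentrating on $y$ as $j\to\infty$, a standard approximate-identity argument shows $G_j\to g$ uniformly on $\mathbb{S}^n$. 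Using $|G_j|\leq\|g\|_{\infty}$, the finiteness of $\mu$, and the dominated (or bounded) convergence theorem, I get $\int G_j\,d\mu\to\int g\,d\mu$, and the last term is $O(1/j)$. Hence $\int g\,d\mu_j\to\int g\,d\mu$, proving weak convergence.

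The construction is essentially routine and the only mild subtlety lies in verifying that $G_j\to g$ uniformly; this reduces to a continuity modulus estimate paired with the fact that $K_j$ is supported in a geodesic ball around $y$ of radius $\lesssim j^{-1/2}$. Since the proof is standard and the statement is cited as \cite[Lemma 3.7]{CL19}, one could alternatively invoke that reference directly. I would present the convolution construction for self-containedness, but note that no new ideas beyond a spherical mollifier are required.
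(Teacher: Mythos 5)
Your overall strategy --- mollify $\mu$ against a smooth spherical approximate identity, add $1/j$ to force strict positivity, and verify weak convergence by Fubini plus uniform convergence of $G_j$ to $g$ --- is correct and is the standard argument; the paper itself gives no proof and simply cites \cite[Lemma 3.7]{CL19}, which proceeds in essentially this way, so there is no divergence of method to report.

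One concrete slip in your construction needs fixing: you take $\Phi$ supported in a small neighborhood of $1$ with $\Phi(1)>0$, but the argument you feed it is $j(\langle x,y\rangle-1)\le 0$ for all $x,y\in\mathbb{S}^n$. With $\Phi$ supported near $1$ the kernel $K_j$ would vanish identically and no normalizing constant $c_j$ exists. You want $\Phi$ supported in a neighborhood of $0$ with $\Phi(0)>0$ (or equivalently, use $\Phi\bigl(j(1-\langle x,y\rangle)\bigr)$ with $\Phi$ supported in $[0,\delta)$), so that $K_j(x,\cdot)$ concentrates on the set $1-\langle x,y\rangle\lesssim 1/j$, i.e.\ a geodesic ball of radius $\lesssim j^{-1/2}$ about $x$, exactly as you use later. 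With that correction the rest goes through: $c_j$ is well defined and independent of $x$ by rotational invariance, $f_j$ is smooth and strictly positive, Fubini applies since $K_j$ is bounded and both measures are finite, $G_j\to g$ uniformly by uniform continuity of $g$ and the shrinking support of the unit-mass kernel, and the $1/j$ term contributes $O(1/j)$.
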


\begin{proof}[Proof of Theorem \ref{main1}]
Let $f_j$ be a sequence of positive and smooth functions on $\mathbb{S}^n$ such that the measures $\mu_j$ converge to $\mu$ weakly as $j\to\infty$. By  Corollary \ref{Part ii},  there are $\gamma_j>0$ and $\Omega_j\in\mathcal{K}_V$
such that
\begin{eqnarray}\label{gdomp}
\int_{\alpha^*_{\Omega_{j}(\omega)}}r_{\Omega_j} (\xi) G_z(r_{\Omega_j},\xi)d\lambda(\xi)&=&\gamma_j\int_{\omega}f_j \psi(u_{\Omega_j},x)dx 
\end{eqnarray}  for any Borel set $\omega\subseteq \mathbb{S}^n$, and \begin{eqnarray}\label{kv}
\int_{\mathbb{S}^n}f_j\Psi(u_{\Omega_j},x)dx&=&\inf\Big\{\int_{\mathbb{S}^n}f_j\Psi(u_\mathbb{K},x)dx:\mathbb{K}\in\mathcal{K}_V\Big\}.
\end{eqnarray}
Clearly, the constants $\gamma_j$ can be calculated by 
\begin{eqnarray}\label{constant-g-j} \gamma_j=\frac{\int_{\S^n}r_{\Omega_j} (\xi) G_z(r_{\Omega_j},\xi)d\lambda(\xi)}{\int_{\S^n}f_j \psi(u_{\Omega_j},x)dx}.
 \end{eqnarray} 

Let ${u_j}_{\max}=\displaystyle\max_{x\in\S^n}u_{{\Omega}_j}(x)$ and $\bar{x}_j\in\mathbb{S}^n$ satisfy that  ${u_j}_{\max}=u_{{\Omega}_j}(\bar{x}_j)$. Then
\begin{eqnarray*}
u_{{\Omega}_j}(x)\ge \langle x, \bar{x}_j\rangle {u_j}_{\max} \ \ \ \ \mathrm{for\ all}\ x\in\mathbb{S}^n\ \mathrm{with}\ \langle x, \bar{x}_j\rangle >0.
\end{eqnarray*}
Denote by $\mathbb{B}^{n+1}_t$  the origin-symmetric Euclidean ball of $\R^{n+1}$ with radius $t>0$. Note that
\begin{eqnarray*}
\widetilde{V}_{G,\lambda}(\mathbb{B}^{n+1}_t)=\int_{\mathbb{S}^n}G(t,\xi)p_\lambda(\xi)d\xi,
\end{eqnarray*} and the function $t \mapsto \widetilde{V}_{G,\lambda}(\mathbb{B}^{n+1}_t)$ is continuous. As  $G_z>0$ on $(0,\infty)\times\mathbb{S}^n$, it follows from $\Omega_0\in \mathcal{K}_0$ that  there exists  $t_0>0$ such that $\widetilde{V}_G(\mathbb{B}^{n+1}_{t_0})=\widetilde{V}_{G,\lambda}(\Omega_0)$. That is, $\mathbb{B}^{n+1}_{t_0}\in\mathcal{K}_V$, which further yields, by \eqref{kv}, that for all $j\in \mathbb{N}$, 
\begin{eqnarray}\label{le}
\int_{\mathbb{S}^n}f_j\Psi({u_{\mathbb{B}^{n+1}_{t_0}}},x)dx \ge\int_{\mathbb{S}^n}f_j\Psi(u_{\Omega_j},x)dx.
\end{eqnarray}
Since $f_jdx\rightarrow \,d\mu$ weakly and $\mu$ is not concentrated on any closed hemisphere, there exists a constant  $c_{\mu}>0$, such that
\begin{eqnarray}\label{NCHj}
\int_{\S^n}\langle x,\bar{x}_j\rangle_{+}f_jdx>c_{\mu} \ \ \ \mathrm{for\ all}\ j\in \mathbb{N},
\end{eqnarray} where $a_+=\max\{a, 0\}$ for $a\in \mathbb{R}$. To this end, assume the contrary, there exists a subsequence of $j$ (which will still be denoted by $j$) such that {$\bar{x}_j\rightarrow x_0\in \mathbb{S}^n$} and 
\begin{eqnarray*}
\int_{\S^n}\langle x,x_0\rangle_{+}d\mu= \lim_{j\rightarrow \infty} \int_{\S^n}\langle x,\bar{x}_j\rangle_{+}f_jdx = 0.
\end{eqnarray*}
Consequently, one gets 
\begin{eqnarray*}
0=\int_{\S^n}\langle x,x_0\rangle_{+}d\mu \ge\frac{\mu(\{x\in\S^n: \langle x,  x_0\rangle>1/N\})}{N}. 
\end{eqnarray*} This  implies $\mu(\{x\in\S^n: \langle x, x_0\rangle>1/N\})=0$ for all $N>1$, which contradicts with, after taking $N\rightarrow \infty$,  the fact that $\mu$ is not concentrated on any closed hemisphere. 

 By  \eqref{NCHj},  for any $\zeta\in (0, 1)$, one has 
\begin{eqnarray*}
c_{\mu} \le\int_{\S^n}\langle x,\bar{x}_j\rangle_+f_jdx
&=&\int_{\{x\in\S^n:  \langle x, \bar{x}_j\rangle>\zeta\}}\langle x,\bar{x}_j\rangle_+f_jdx+\int_{\{x\in\S^n:   \langle x, \bar{x}_j\rangle\leq \zeta\}}\langle x,\bar{x}_j\rangle_+f_jdx
\\&\le&\int_{\{x\in\S^n:  \langle x, \bar{x}_j\rangle>\zeta\}}f_jdx+\zeta \int_{\S^n} f_jdx,
\end{eqnarray*} since $0<\langle x,\bar{x}_j\rangle_+<1$.  
Hence
\begin{eqnarray*}
\int_{\{x\in\S^n:  \langle x, \bar{x}_j\rangle>\zeta\}}f_jdx \ge c_{\mu}-\zeta \int_{\S^n}f_jdx.
\end{eqnarray*}
As $\,d\mu_j= f_jdx$ converges to $\mu$ weakly, one can choose $\zeta_1 \in (0, 1)$ to be a small enough constant such that, for all $j\in \mathbb {N}$,   $$\int_{\{x\in\S^n:  \langle x, \bar{x}_j\rangle>\zeta_1\}}f_jdx\geq c_{\mu}-\zeta_1  \int_{\S^n}f_jdx>\frac{c_{\mu}}{2}.$$ 
Consequently, it follows that
\begin{eqnarray}\label{ge}
\int_{\mathbb{S}^n}\Psi(u_{\Omega_j},x)f_jdx&\ge& \int_{\{x\in\S^n:  \langle x, \bar{x}_j\rangle>\zeta_1\}} \Psi({u_j}_{\max}  \langle x, \bar{x}_j\rangle,x)f_jdx
\nonumber\\&\ge&\min_{x\in\S^n} \Psi(\zeta_1 {u_j}_{\max},x) \int_{\{x\in\S^n:  \langle x, \bar{x}_j\rangle>\zeta_1\}}f_jdx \nonumber\\&\ge&\frac{c_{\mu}}{2} \cdot \min_{x\in\S^n}\Psi(\zeta_1 {u_j}_{\max},x).
\end{eqnarray}

As $\,d\mu_j= f_jdx$ converges to $\mu$ weakly, \eqref{le}  implies that for all $j\in \mathbb{N}$, 
\begin{eqnarray*} \int_{\mathbb{S}^n}f_j\Psi(u_{\Omega_j},x)dx\leq \sup_{j\in \mathbb{N}}
\int_{\mathbb{S}^n}f_j\Psi(u_{{\mathbb{B}^{n+1}_{t_0}}},x)dx <\infty.
\end{eqnarray*} Together with \eqref{ge} and the fact that  $\Psi(s, \xi)\to\infty$ for each $\xi\in\mathbb{S}^n$ as $s\to\infty$, one gets  that $\{{u_j}_{\max}\}_{j\in \mathbb{N}}$ is a bounded sequence.  The Blaschke selection theorem implies the existence of a subsequence $\{\Omega_j\}_{j\in \mathbb{N}}$ (which will still be denoted by $\{\Omega_j\}_{j\in \mathbb{N}}$) and a compact convex set $\Omega$ such that $\Omega_j\rightarrow \Omega.$ According to Proposition \ref{Prop-V-G-1}, one has $$\widetilde{V}_{G, \lambda}(\Omega)= \lim_{i\rightarrow \infty}  \widetilde{V}_{G, \lambda}(\Omega_i)=\widetilde{V}_{G,\lambda}(\Omega_0)>0.$$  This proves $\Omega\in \mathcal{K}$ and  hence $\Omega\in \mathcal{K}_V.$ It follows from \eqref{gdomp}, \eqref{constant-g-j}, Proposition \ref{prop-c-g-1} and the dominated convergence theorem that $\gamma_j\to\gamma$ and \begin{eqnarray*}
\int_{\alpha^*_{\Omega(\omega)}}r_{\Omega} (\xi) G_z(r_{\Omega},\xi)d\lambda(\xi)&=&\gamma \int_{\omega}f \psi(u_{\Omega},x)dx 
\end{eqnarray*}  holds for each Borel set $\omega\subseteq \mathbb{S}^n$. This  concludes that $\Omega$ satisfies \eqref{GDOMP1} as desired.
\end{proof}

\section{Appendix: $C^2$-estimates} \label{section-5}
This section is devoted to the second order derivative estimates. Such estimates were obtained in \cite[Section 8]{CL19}.  For readers' convenience and for completeness, a brief proof is included. 
Abuse of notations, $C$ and $C_0$ are some constants independent of $(x, t)\in \S^n\times [0, T)$. 
\begin{lem}\label{lowerboundsigma_n} Let $\Phi(x,s,y): \S^n\times[0,\infty)\times\R^n\to[0,\infty)$ be a smooth function such that $\Phi(x,s, z)>0$ whenever $s>0$. Let $T>0$ be a constant  and $u(\cdot, t)$ be a positive, smooth and uniformly convex solution to
\begin{eqnarray}\label{geneflow}
\frac{\partial u}{\partial t}(x,t)=-\Phi(x,u,\nabla u)\big(\det(\nabla^2 u+uI)\big)^{-1}+u\eta(t) \ \ \ \mathrm{on} \ \S^n\times [0, T).
\end{eqnarray}  If there is a  positive constant $C_0$ such that,  for all $x\in \S^n$ and all $t\in [0,T)$,  
\begin{eqnarray}
\label{cond1}1/C_0\le u(x,t)\le C_0, \\
\label{cond2}|\nabla u|(x,t)\le C_0, \\
\label{cond3}0\le|\eta(t)|\le C_0,
\end{eqnarray} then the following statement holds: for all $(x, t)\in \S^n\times [0, T)$, 
\begin{eqnarray}\label{lower bound sigma}
\det(\nabla^2u+uI)\ge1/C,
\end{eqnarray}
where $C>0$ is a constant depending on $\Omega_0$ (the initial hypersurface), $C_0$, $|\Phi|_{L^\infty(D)}$, $|1/\Phi|_{L^\infty(D)}$, $|\Phi|_{C^1_{x,s,p}(D)}$ with $D=\S^n\times[1/C_0,C_0]\times \mathbb B^n_{C_0}$.
\end{lem}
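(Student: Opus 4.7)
The hypotheses $1/C_0\le u\le C_0$ and $|\nabla u|\le C_0$ confine $(x,u,\nabla u)$ to the compact set $D$, so $\Phi$ is bounded from above and below by positive constants along the flow. Consequently the desired lower bound $\det(\nabla^2 u+uI)\ge 1/C$ is equivalent to a uniform upper bound on the strictly positive ``speed''
\[
\Theta(x,t)\,:=\,u\eta(t)-u_t(x,t)\,=\,\frac{\Phi(x,u,\nabla u)}{\det(\nabla^2 u+uI)}.
\]
The plan is to obtain such an upper bound by applying a parabolic maximum principle to a suitable auxiliary function of $\Theta$ on $\S^n\times[0,T)$, with the initial bound coming from the smoothness and uniform convexity of $\Omega_0$ (which give $\Theta(\cdot,0)$ bounded).

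\textbf{Evolution equation for $\Theta$.} Differentiating \eqref{geneflow} in $t$ and using $\partial_t\det(b_{ij})=\det(b_{ij})\,b^{ij}\bigl(u_{tij}+u_t\delta_{ij}\bigr)$, where $b_{ij}=\nabla_{ij}u+u\delta_{ij}$ and $b^{ij}$ is its inverse, one derives a parabolic equation of the schematic form
\[
\partial_t\Theta \;=\; \Theta\,b^{ij}\Theta_{ij}\;+\;\Theta^{2}\,b^{ii}\;-\;n\Theta\eta\;+\;\bigl[\Phi_s\,u_t+\Phi_{p_i}u_{ti}\bigr]K,
\]
with $K=1/\det(b_{ij})$. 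The last bracket is controllable in absolute value by a multiple of $1+\Theta+|\nabla\Theta|$ using the $C^1$-bound on $\Phi$ and the bounds on $\eta$ and $\nabla u$; the reaction term $\Theta^{2}b^{ii}$ is sign-less and potentially very large, and is the obstruction to a direct maximum principle.

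\textbf{Auxiliary function and maximum principle.} To neutralize the bad term I consider
\[
W(x,t)=\log\Theta(x,t)+\varphi\bigl(u(x,t)\bigr),
\]
with $\varphi$ to be chosen. Suppose the space-time maximum of $W$ on $\S^n\times[0,T)$ is attained at $(x_0,t_0)$ with $t_0>0$; otherwise $W$ is bounded by its value at $t=0$. At $(x_0,t_0)$, combining $\nabla W=0$ (which gives $\Theta_i=-\varphi'(u)\,\Theta\,u_i$), $b^{ij}W_{ij}\le 0$, $\partial_t W\ge 0$, and the identity $b^{ij}u_{ij}=n-ub^{ii}$, one obtains an inequality whose leading part is
\[
0\;\le\;(\text{bounded})\;+\;\Bigl\{1+u\varphi'(u)+\bigl[(\varphi')^{2}-\varphi''(u)\bigr]\,\tfrac{b^{ij}u_iu_j}{b^{ii}}\Bigr\}\,\Theta\,b^{ii}.
\]
If $\varphi$ is chosen so that the bracketed coefficient is strictly negative (exploiting $u\ge 1/C_0$, $|\nabla u|\le C_0$ and $b^{ij}u_iu_j\le|\nabla u|^{2}\,b^{ii}$), the inequality collapses to $c_1\,\Theta(x_0,t_0)\le C_2$, and the conclusion follows.

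\textbf{Main obstacle.} The decisive point is the choice of $\varphi$. A naive linear choice $\varphi(u)=-Au$ fails, because then $(\varphi')^{2}=A^{2}$ multiplies $b^{ij}u_iu_j$ and, after $b^{ij}u_iu_j\le C_0^{2}b^{ii}$, contributes a positive $A^{2}C_0^{2}\,\Theta\,b^{ii}$ that dominates the negative $(1-Au)\,\Theta\,b^{ii}$ for every $A$. Closing the estimate requires either a more refined choice of $\varphi$ (possibly involving a function of $u$ tailored so that $(\varphi')^{2}-\varphi''$ is much smaller than $|\varphi'|$) or a finer handling of $b^{ij}u_iu_j$ in a principal frame of $b_{ij}$; this is the heart of the computation and is carried out in \cite[Section 8]{CL19}. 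Once this balance is achieved, the remaining reduction of the inequality at $(x_0,t_0)$ to an algebraic upper bound on $\Theta$ is routine bookkeeping with the bounded data $\Phi$, $\Phi_s$, $\Phi_{p_i}$ and $\eta$.
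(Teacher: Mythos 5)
Your setup is sound and your diagnosis of the obstruction is accurate, but the proposal does not actually prove the lemma: you explicitly leave ``the heart of the computation'' --- the choice of $\varphi$ that makes the coefficient of $\Theta\, b^{ii}$ negative --- to a citation. That is precisely the step the paper supplies, so as written there is a genuine gap. The missing ingredient is $\varphi(u)=-\log\bigl(u-\epsilon\bigr)$ with $\epsilon=\tfrac12\inf_{\S^n\times[0,T)}u>0$ (positive by \eqref{cond1}); equivalently, the paper works directly with $Q=\dfrac{u\eta(t)-u_t}{u-\epsilon}$, whose logarithm is exactly your $W$ with this $\varphi$. This choice is tailored to your own criterion: since $(\varphi')^{2}-\varphi''\equiv 0$ for the logarithm, the problematic gradient term $\bigl[(\varphi')^{2}-\varphi''\bigr]b^{ij}u_iu_j$ vanishes identically (this is why the linear $\varphi=-Au$ fails and the logarithm succeeds), while $1+u\varphi'(u)=-\epsilon/(u-\epsilon)<0$ is bounded away from zero by \eqref{cond1}.

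One further step you dismiss as bookkeeping is in fact needed to close the argument: after the good term $-\epsilon Q^{2}\sum_i b^{ii}$ is secured, the remaining bad terms are still of order $Q^{2}$, and one beats them via the arithmetic--geometric mean inequality
\begin{equation*}
\sum_i b^{ii}\;\ge\; n\bigl(\det(\nabla^2u+uI)\bigr)^{-1/n}\;=\;n\Bigl(\tfrac{(u-\epsilon)Q}{\Phi}\Bigr)^{1/n},
\end{equation*}
which upgrades the good term to $-C_2Q^{2+1/n}$ and yields $\partial_tQ\le C_1Q^{2}-C_2Q^{2+1/n}$ at the spatial maximum, hence $Q\le C_3$ and \eqref{lower bound sigma}. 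With the logarithmic $\varphi$ and this AM--GM step inserted, your argument coincides with the paper's; without them it is incomplete.
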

\begin{proof} Let  ${\epsilon}=\frac{1}{2}\inf_{{\S^n\times{[0,T)}}}u(x, t)$. Note that $\epsilon\geq \frac{1}{2C_0}$ due to \eqref{cond1}.  Consider the following auxiliary function
\begin{eqnarray} Q=\frac{-u_t+u\eta (t)}{u-\epsilon}=\frac{\Phi(x,u,\nabla u)\big(\det(\nabla^2 u+uI)\big)^{-1}}{u-\epsilon}. \label{Q-u--1} \end{eqnarray}
Denote by $Q_i=\nabla_i Q$  the $i$-th component of $\nabla Q$. Let $x_t\in \S^n$ for each $t\in [0, T)$ be such that $Q(x_t,t)=\max_{x\in \mathbb{S}^n}Q(x,t).$ 

 At $(x_t, t)$, one gets 
\begin{eqnarray}\label{q_1}
0&=&Q_i=\frac{-u_{ti}+u_i \eta (t)}{u-\epsilon}-\frac{(-u_t+ u\eta(t)){u_i}}{(u-\epsilon)^2}=\frac{-u_{ti}}{u-\epsilon}+\frac{(\eta(t)-Q){u_i}}{u-\epsilon},\\ 
 \label{q_2}
0&{\ge}&{Q_{ij}}=\frac{-u_{tij}+ \eta u_{ij}}{u-\epsilon}-\frac{(-u_t+ \eta u){u_{ij}}}{(u-\epsilon)^2}=\frac{-u_{tij}}{u-\epsilon}+\frac{(\eta -Q) u_{ij}}{u-\epsilon}.
\end{eqnarray} That is, the matrix $(Q_{ij})$ is a negative definite matrix.  As $u-\epsilon>0$, one gets 
\begin{eqnarray}\label{q*2} u_{ti}=(\eta(t)-Q){u_i} \ \ \ \mathrm{and} \ \ \ u_{tij}\ge(\eta-Q) u_{ij}. 
\end{eqnarray}  By \eqref{Q-u--1}, \eqref{q_1} and \eqref{q_2},  one has 
\begin{eqnarray}\label{Phi_t}
\qquad\partial_t\Phi=\Phi_u u_t+\Phi_{u_k}u_{kt}=\Phi_u[\eta u-Q(u-\epsilon)]+\Phi_{u_k}u_k(\eta-Q)\le C'(1+Q),
\end{eqnarray} where $C'>0$ is a constant. 
Let $b_{ij}=u_{ij}+u\delta_{ij}$ and $b^{ij}$ be the inverse matrix of $b_{ij}$. It can be checked from  \eqref{Q-u--1} that 
  \begin{eqnarray}\label{Q_t}
{\partial}_{t}{Q}&=&\frac{\left(\Phi(x,u,\nabla u)\big(\det(\nabla^2 u+uI)\big)^{-1}\right)_t}{u-\epsilon}-Q\frac{u_t}{u-\epsilon}\nonumber \\&=& \frac{\big(\det(\nabla^2 u+uI)\big)^{-1}\partial_t\Phi}{u-\epsilon}-Qb^{ij}(u_{tij}+u_t\delta_{ij})-Q\frac{u_t}{u-\epsilon}\nonumber \\&=& \frac{Q}{\Phi} \partial_t\Phi -Qb^{ij}(u_{tij}+u_t\delta_{ij})-Q\frac{u_t}{u-\epsilon}.
\end{eqnarray} This further implies,  together with  \eqref{Q-u--1},  \eqref{q*2},  \eqref{Phi_t} and \eqref{Q_t}, that
\begin{eqnarray*}
\partial_tQ  &\le& \frac{C'(1+Q)Q}{\Phi}-Qb^{ij}\Big(\!(\eta-Q)u_{ij}+[\eta u-Q(u-\epsilon)]\delta_{ij}\!\Big)-\Big(\frac{\eta u}{u-\epsilon}-Q\Big)Q
\\  &\le& C^{''}(1+Q)Q-Qb^{ij}\Big((\eta-Q)b_{ij}+\epsilon Q\delta_{ij}\Big)-\Big(\frac{\eta u}{u-\epsilon}-Q\Big)Q
\\\  &=& \Big(C^{''}-n\eta-\frac{\eta u}{u-\epsilon}\Big)Q+(C^{''}+n+1)Q^2-\epsilon Q^2\sum_ib^{ii},
\end{eqnarray*} where $C^{''}$ is a positive constant. Without loss of generality, let $Q>1$ at $(x_t, t)$. Hence $Q<Q^2$ at $(x_t, t)$, and there exists two positive constants $C_1$ and $C_2$ such that \begin{eqnarray*}
\partial_tQ\le C_1Q^2-C_2Q^{2+\frac{1}{n}},
\end{eqnarray*} where we have used  \eqref{Q-u--1} and the inequality $$\sum_ib^{ii} \ge n\big(\det(\nabla^2 u+uI)\big)^{-\frac{1}{n}}= n \Big(\frac{(u-\epsilon)Q}{\Phi} \Big)^{\frac{1}{n}}.$$ Consequently,  $Q\le C_3$ for some constant $C_3>0.$ This, together with \eqref{Q-u--1}, implies \eqref{lower bound sigma}  with $C>0$ a constant depending on $C_0$, $|\Phi|_{L^\infty(D)}$, $|1/\Phi|_{L^\infty(D)}$, $|\Phi|_{C^1_{x,s,p}(D)}$ and the initial hypersurface $\Omega_0$.
\end{proof}

The following lemme provides the boundedness (from both above and below) of the principal curvature radii of $\mathcal{M}_t$. 
\begin{lem}\label{boundsprincipleradii} Let $\Phi(x,s,y)$ and $T$ be as stated in Lemma \ref{lowerboundsigma_n}. Let  $u(x, t)$ be a positive, smooth and uniformly convex solution to \eqref{geneflow} and satisfy conditions  \eqref{cond1}-\eqref{cond2}. Then, for all $t\in[0,T)$ and $x\in \S^n$, one has
\begin{eqnarray}\label{upperlower}
(\nabla^2u+uI)(x,t)\le C I \ \ \ \mathrm{for\ all}\ (x, t)\in \S^n\times [0, T),
\end{eqnarray}
   where $C$ is a positive constant depending on $C_0$, $u(\cdot,0)$, $|\Phi|_{L^\infty(D)}$, $|1/\Phi|_{L^\infty(D)}$, $|\Phi|_{C^1_{x,s,p}(D)}$ and $|\Phi|_{C^2_{x,s,p}(D)}$ with  $D=\S^n\times[1/C_0,C_0]\times \mathbb{B}^n_{C_0}$.
\end{lem}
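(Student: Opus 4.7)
The plan is a parabolic maximum principle argument on an auxiliary function built from the largest eigenvalue of $b_{ij} = \nabla_{ij}u + u\delta_{ij}$. Rewrite the flow as $\partial_t u = -\Psi + u\eta(t)$ with $\Psi(x,u,\nabla u; b) = \Phi(x,u,\nabla u)[\det b]^{-1}$. Lemma \ref{lowerboundsigma_n} yields $\det b \geq 1/C$, so under \eqref{cond1}--\eqref{cond3} the linearized principal operator appearing in the evolution of $b$, namely $\mathcal{L}(\cdot) = \Psi\, b^{kl}\nabla_{kl}(\cdot)$, is uniformly elliptic, and $\Psi$ together with its first and second derivatives in $(x,u,\nabla u)$ is controlled by $|\Phi|_{C^2_{x,s,p}(D)}$.

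Next, I would introduce the auxiliary function
\[
W(x,t) = \log \lambda_{\max}(b(x,t)) + \tfrac{A}{2}|\nabla u(x,t)|^2 - B\,u(x,t),
\]
with constants $A, B > 0$ to be chosen large, and aim to bound $W$ from above. Suppose $\sup_{\S^n\times[0,T)} W$ is attained at $(x_0, t_0)$ with $t_0 > 0$. At $x_0$, pick a local orthonormal frame diagonalizing $b_{ij}(x_0,t_0)$ so that $b_{11}(x_0,t_0) = \lambda_{\max}$, and treat $\log \lambda_{\max}$ as smooth at $(x_0, t_0)$ via the standard perturbation trick at repeated eigenvalues. Differentiating $\partial_t u = -\Psi + u\eta$ twice in the direction $e_1$ and applying the commutator identity $u_{11ii} = u_{ii11} + u_{ii} - u_{11}$ on $\S^n$, one extracts a parabolic differential inequality for $\log b_{11}$ whose leading second-order operator is $\mathcal{L}$.

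I would then use the maximum principle conditions $\partial_t W \geq 0$, $\nabla_i W = 0$ (which gives $b_{11,i}/b_{11} = Bu_i - Au_k u_{ki}$), and $(\nabla^2 W) \leq 0$ at $(x_0, t_0)$, and substitute into that inequality. After expansion, the terms involving $b_{11}$ organize into a favorable negative contribution (arising from the $-Bu$ and $\tfrac{A}{2}|\nabla u|^2$ corrections combined with the strict ellipticity of $\mathcal{L}$) and a bounded residual coming from the $(x, u, \nabla u)$-derivatives of $\Psi$ and from the curvature of $\S^n$. Choosing $A \gg B \gg 1$ depending on $C_0$ and $|\Phi|_{C^2_{x,s,p}(D)}$, the favorable term dominates and yields $b_{11}(x_0, t_0) \leq C$, hence $\lambda_{\max}(b) \leq C$ throughout $\S^n \times [0, T)$, which is \eqref{upperlower}.

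The principal obstacle is the careful bookkeeping of constants in the absorption step: the quadratic terms in $b_{11,i}/b_{11}$ and the cross terms produced by differentiating $\Psi$ in its $(x, u, \nabla u)$-arguments must be dominated by the negative contributions coming from the auxiliary corrections in $W$ after the first-order condition is substituted. This is where both the lower bound $\det b \geq 1/C$ from Lemma \ref{lowerboundsigma_n} (securing the strict ellipticity of $\mathcal{L}$) and the $C^2$-control on $\Phi$ (bounding the cross-term coefficients) enter in an essential way.
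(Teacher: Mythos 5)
Your plan is essentially the paper's proof. The paper works with $W=\log b_{11}-\beta\log u+\tfrac{A}{2}(u^2+|\nabla u|^2)$, diagonalizes $b_{ij}$ at the maximum point, differentiates the logarithm of the equation once and twice, commutes derivatives via the Ricci identity, and uses the lower bound on $\det(\nabla^2u+uI)$ from Lemma \ref{lowerboundsigma_n} for the ellipticity of $\sum_i b^{ii}\nabla_{ii}$; your corrections $-Bu$ and $\tfrac{A}{2}|\nabla u|^2$ play exactly the roles of $-\beta\log u$ and $\tfrac{A}{2}r^2$ (both produce the favorable terms $-cB\sum_i b^{ii}$ and $-A\sum_i b^{ii}u_{ii}^2\approx -A\sum_i b_{ii}$, using $u\ge 1/C_0$).

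The one concrete claim in your sketch that would derail the absorption is the hierarchy $A\gg B$. After dividing the maximum-principle inequalities by $\eta u-u_t=\Phi\,(\det b)^{-1}>0$, the time derivatives of the correction terms leave a residual of the form $\eta\bigl(1+A|\nabla u|^2-Bu\bigr)/(\eta u-u_t)$. Since $1/(\eta u-u_t)=\det b/\Phi$ is precisely the quantity with no a priori upper bound (it can be as large as a constant times $b_{11}^n$), this term cannot be dominated by $-A\sum_i b_{ii}$; it must be made nonpositive outright, which forces $Bu\ge 1+A|\nabla u|^2$ at the maximum point, i.e. $B\ge C_0(1+AC_0^2)$ --- the opposite of your hierarchy. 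This is exactly why the paper imposes $\beta>\max\{1+Ar^2,\,C_1\}$. With $B\gtrsim A\gg1$ the rest of your absorption goes through and still gives a uniform bound $b_{11}\le C(1+B/A)\le C'$, so the fix is only to reverse the order in which the constants are chosen.
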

\begin{proof}
Let $b_{ij}=u_{ij}+u\delta_{ij}$ and $b^{ij}$ be the inverse matrix of $b_{ij}$.  Let $$r(x, t)=\sqrt{u^2(x, t)+|\nabla u|^2(x, t)}$$ and consider the following auxiliary function
\begin{eqnarray*}
W(x, t)=\log b(x,t) -\beta \log u(x,t)+\frac{A}{2}r^2(x,t),
\end{eqnarray*}
where  $\beta $ and $A$ are large constants to be decided, and
\[b(x,t)=\max\bigg\{\sum b_{ij}(x,t)\zeta_i\zeta_j:  \sum_i \zeta_i^2=1\bigg\}.\]
Let $T'\in(0,T)$  be an arbitrary number but fixed. Assume that $W$ attains its maximum on $\S^n\times[0,T']$ at $(x_0,t_0)$ with $t_0>0$ (as otherwise, there is nothing to prove). By a rotation, we may assume that $b^{ij}$ and $b_{ij}$ are diagonal at $(x_0, t_0)$, and   $b(x_0,t_0)= b_{11}(x_0, t_0)$.  Let
\begin{eqnarray*} 
w(x,t)=\log b_{11}-\beta \log u+\frac{A}{2}r^2.
\end{eqnarray*}
Then $\max_{\S^n\times[0,T']}W=\max_{\S^n\times[0,T']}w$, and so $w$ achieves its maximum at $(x_0,t_0)$. In the following,  we will provide an upper bound for $w$ (independent of $T'$), and this is sufficient for \eqref{upperlower}, as $T'$ is arbitrary.

At  $(x_0, t_0)$, one has 
\begin{eqnarray}\label{partial t}
0 \!\!\!&\le& \!\!\!\partial_t w=b^{11}\partial_t b_{11}-\beta\frac{u_t}{u}+Arr_t,\\ 
 \label{partial1}
0 \!\!\!&=& \!\!\!\nabla_iw=b^{11}\nabla_ib_{11}-\beta \frac{u_i}{u}+Arr_i, \\ 
 \label{partial2}
\qquad\qquad0 \!\!\!&\ge& \!\!\! \nabla^2_{ii}w=b^{11}\nabla^2_{ii}b_{11}-(b^{11})^2\nabla_ib_{11}\nabla_ib_{11}
-\beta \frac{\nabla^2_{ii}u}{u}+\beta \frac{u_iu_i}{{u}^2}+A(rr_{ii}+r_ir_i).
\end{eqnarray}

Note that  
\begin{eqnarray}
\label{d*11}&r_t=&\frac{uu_t+\sum_iu_iu_{it}}{r},\\
\label{d*12}
&r_k=&\frac{uu_k+\sum_iu_iu_{ik}}{r}=\frac{u_kb_{kk}}{r},\\
\label{d*13}
&r_{kl}=&\frac{uu_{kl}+u_ku_l+\sum_iu_iu_{ikl}+\sum_iu_{ik}u_{il}}{r}-\frac{u_ku_lb_{kk}b_{ll}}{r^3}.
\end{eqnarray}

The equation \eqref{geneflow} can be rewritten as
\begin{equation}\label{log}
\log(\eta u-u_t)+\log\big(\det(\nabla^2 u+uI)\big)=\overline{\Phi}(x,u,\nabla u),
\end{equation}
where $\overline{\Phi}(x,u,\nabla u):=\log\left(\Phi(x,u,\nabla u) \right)$. Differentiating \eqref{log} gives
\begin{eqnarray}
\frac{\eta u_k- u_{tk}}{\eta u-u_t }=-\sum b^{ii}\nabla_kb_{ii} +\nabla_k\overline{\Phi}=-\sum b^{ii}u_{kii}-\sum b^{ii}u_i\delta_{ik}+\nabla_k \overline{\Phi},\nonumber \\
\label{d2}\qquad\qquad\frac{\eta u_{11}-u_{t11}}{\eta u-u_t}-\frac{(\eta u_1- u_{t1})^2}{(\eta u-u_t )^2}=-\sum b^{ii}\nabla^2_{11}b_{ii}+\sum b^{ii}b^{jj}(\nabla_1b_{ij})^2+\nabla^2_{11}\overline{\Phi}.
\end{eqnarray}
Dividing \eqref{partial t} by $\eta u-u_t$ (which is positive due to \eqref{geneflow}), by \eqref{d2} and $b_{11}=u_{11}+u$, we have
\begin{eqnarray*}
0&\le&-b^{11}\left(\frac{\eta u_{11}-u_{11t}}{\eta u-u_t}-\frac{\eta b_{11}}{\eta u-u_t}+1\right)-\frac{\beta u_t}{u(\eta u-u_t)}+\frac{Arr_t}{\eta u-u_t}
\\&=&-b^{11}\frac{\eta u_{11}-u_{11t}}{\eta u-u_t} -b^{11}+\eta\frac{1-\beta}{\eta u-u_t}+\frac{\beta}{u}+\frac{Arr_t}{\eta u-u_t}
\\&\le&b^{11}\sum b^{ii}\nabla^2_{11}b_{ii}-b^{11}\sum b^{ii}b^{jj}(\nabla_1b_{ij})^2-b^{11}\nabla^2_{11}\overline{\Phi} +\eta\frac{1-\beta}{\eta u-u_t}+\frac{\beta}{u}+\frac{Arr_t}{\eta u-u_t}.
\end{eqnarray*}  Together with \eqref{d*11}, \eqref{d*12},  \eqref{d*13} and employing \eqref{partial2}, the following holds: 
\begin{eqnarray}\label{inequ}
\nonumber0&\le& b^{11}\sum\Big(b^{11}b^{ii}(\nabla_ib_{11})^2-b^{ii}b^{jj}(\nabla_1b_{ij})^2\Big)+(nb^{11}-\sum b^{ii})+n\frac{\beta}{u}-\beta \sum b^{ii}
\\\nonumber&{}&-\beta\sum b^{ii}\frac{u_i^2}{u^2}-b^{11}\nabla^2_{11}\overline{\Phi} +\eta\frac{1-\beta}{\eta u-u_t}+\frac{\beta}{u}+\frac{Arr_t}{\eta u-u_t}-A\sum b^{ii}(rr_{ii}+r_ir_i)
\\\nonumber&\le&(nb^{11}-\sum b^{ii})+(n+1)\frac{\beta}{u}-\beta \sum b^{ii}-\beta\sum b^{ii}\frac{u_i^2}{u^2}-b^{11}\nabla^2_{11}\overline{\Phi}+\eta\frac{1-\beta}{\eta u-u_t}
\\\nonumber&{}&+\left(\frac{A\eta r^2}{\eta u-u_t}-Au+A\sum b^{ii}u_ku_{kii}+Ab^{ii}u_i^2-Au_k\nabla_k\overline{\Phi}\right)
\\\nonumber&{}&-A\sum b^{ii}(uu_{ii}+u_i^2+u_{ii}^2+u_ku_{kii})
\\\nonumber&\le&nb^{11}+(n+1)\frac{\beta}{u}-\beta \sum b^{ii} -b^{11}\nabla^2_{11}\overline{\Phi}+\eta\frac{Ar^2+1-\beta}{\eta u-u_t}+(n-1)Au
\\&{}&-Au_k\nabla_k\overline{\Phi}-A\sum b_{ii},
\end{eqnarray} where the following formula, by the Ricci identity, has been used: \begin{eqnarray*}
\nabla^2_{11}b_{ij}=\nabla^2_{ij}b_{11}-\delta_{ij}b_{11}+\delta_{11}b_{ij}+\delta_{1i}b_{1j}-\delta_{1j}b_{1i}.
\end{eqnarray*} It follows from  \eqref{partial1} that
\begin{eqnarray*}
-b^{11}\nabla_{11}^2\overline{\Phi} -Au_k\nabla_k \overline{\Phi} \!\!&\le&\!\!-b^{11}\Big(\frac{\Phi_{x_1x_1}}{\Phi}-\frac{(\Phi_{x_1})^2}{\Phi^2}+\frac{\Phi_{uu}u_1^2+\Phi_uu_{11}}{\Phi} -\frac{(\Phi_u u_1)^2}{\Phi^2}-\frac{(\Phi_{u_1}u_{11})^2}{\Phi^2}
\\ \!\!&{}&\!\!+\frac{\Phi_{u_1u_1}u^2_{11}
+\Phi_{u_i}u_{i11}}{\Phi}\Big)
 -Au_k\Big(\frac{  \Phi_{x_k}}{\Phi}+\frac{\Phi_u}{\Phi}u_k+\frac{\Phi_{u_i}}{\Phi}u_{ik}\Big).\end{eqnarray*}
 Hence, a constant $C_1>0$ can be found so that 
 \begin{eqnarray*}
 nb^{11}+(n+1)\frac{\beta}{u}+(n-1)Au
-b^{11}\nabla_{11}^2\overline{\Phi} -Au_k\nabla_k \overline{\Phi} &\le& C_1b^{11}+\beta C_1+AC_1.\end{eqnarray*}
Choose $\beta$ to be a constant such that $\beta>\max\{1+Ar^2,C_1\}$. Then  $C_1b^{11}-\beta\sum b^{ii}<(C_1-\beta)b^{11}<0$ and  \eqref{inequ} implies  \begin{eqnarray*} 
0&\le&-\beta \sum b^{ii}+\eta\frac{Ar^2+1-\beta}{\eta u-u_t}
  -A\sum b_{ii}+C_1 b^{11}+C_1 +C_1 b_{11}+\beta C_1+AC_1\\ &\le&-A\sum b_{ii}+\beta C_1+AC_1\\ &\le& -A b_{11}+\beta C_1+AC_1. 
\end{eqnarray*} This gives   $b_{11}\le C_2$ at  $(x_0, t_0)$ for a constant   $C_2>0$. Consequently, $w(x, t)$ is bounded from above by a constant and then $\max_{(x, t)\in \S^n\times [0, T)}b_{11}<C$ holds for some constant $C>0$ which depends only on $C_0$, $u(\cdot,0)$, $|\Phi|_{L^\infty(D)}$, $|1/\Phi|_{L^\infty(D)}$, $|\Phi|_{C^1_{x,s,p}(D)}$ and $|\Phi|_{C^2_{x,s,p}(D)}$.   \end{proof}

\noindent{\bf{Acknowledgement.} } The research of Li has been supported by NSFC (No. 12031017). The research of Sheng has been supported by NSFC (Nos. 12031017 and 11971424). The research of Ye has been supported by an NSERC grant, Canada. The research of Yi has been supported by Scientific Research Foundation for Scholars of HZNU (No. 4085C50220204091). The authors are greatly indebted to the reviewer for many valuable comments which greatly improve the presentation of the paper.

 \noindent Qi-Rui Li, \ \ {\tt qi-rui.li@zju.edu.cn} \\ 
School of Mathematical Sciences, Zhejiang University, Hangzhou 310027, China.
 
\vskip 2mm \noindent  Weimin Sheng, \ \ {\tt weimins@zju.edu.cn}\\ 
 School of Mathematical Sciences, Zhejiang University, Hangzhou 310027, China.
 
 \vskip 2mm \noindent Deping Ye, \ \ {\tt deping.ye@mun.ca}\\
  Department of Mathematics and Statistics, Memorial
University of Newfoundland, St. John's, Newfoundland A1C 5S7, Canada.  

 \vskip 2mm \noindent Caihong Yi, \ \ {\tt caihongyi@hznu.edu.cn}\\
 School of Mathematics, Hangzhou Normal University, Hangzhou 311121, China.
  
\end{document}